\newtheoremstyle{mystyle} 
    {\topsep}				
    {\topsep}				
    {\normalfont}			
    {} 					
    {\bfseries} 			
    {\newline}				
    {5pt plus 1pt minus 1pt}
    {\underline{\thmname{#1}\thmnumber{#2}\thmnote{（#3）}}}	
\theoremstyle{definition}
\newtheorem{theorem}{Theorem}
\newtheorem{prop}[theorem]{Proposition}
\newtheorem{lem}[theorem]{Lemma}
\newtheorem{cor}[theorem]{Corollary}
\newtheorem{definition}[theorem]{Definition}
\newtheorem{rem}[theorem]{Remark}
\numberwithin{theorem}{section} 	 
\numberwithin{equation}{section}		 
\newcommand \defeq{\overset{\text{def}}=}
\newcommand \ab{\operatorname{ab}}
\newcommand \Gal{\operatorname{Gal}}
\newcommand \isom {\overset \sim \rightarrow}
\newcommand \Ker{\operatorname{Ker}}
\def \Image{\operatorname{Im}}
\def \Aut{\operatorname{Aut}}
\def \tor{\operatorname{tor}}
\def \inf{\operatorname{inf}}
\def \Ind{\operatorname{Ind}}
\def\p{\frak{p}}
\def\q{\frak{q}}
\def\ur{\operatorname{ur}}
\def\prim{\operatorname{prim}}
\def\Ann{\operatorname{Ann}}
\def\Frob{\operatorname{Frob}}
\def\Hom{\operatorname{Hom}}
\def\rank{\operatorname{rank}}
\def\rk{\operatorname{rk}}
\def\dim{\operatorname{dim}}
\def\Dec{\operatorname{Dec}}
\def\fp{{\frak p}}
\def\fm{{\frak m}}
\def\fd{{\frak d}}
\newcommand\bZ{\mathbb Z}
\newcommand\bQ{\mathbb Q}
\newcommand\bR{\mathbb R}
\newcommand\bC{\mathbb C}
\newcommand\bF{\mathbb F}
\newcommand\ltilde{\tilde{l}}
\newcommand\chil{\chi^{(l)}}
\newcommand\chilp{\chi_{\p}^{(l)}}
\newcommand\bap{\overline{\p}}
\newcommand\baq{\overline{\q}}
\def \cs{\operatorname{cs}}
\def \Ram{\operatorname{Ram}}
\def\tr{\operatorname{tr}}
\def\sup{\operatorname{sup}}
\def\inf{\operatorname{inf}}
\def \s{\operatorname{s}}
\def \ff{\operatorname{ff}}
\def \naive{\operatorname{naive}}
\begin{document}

\renewcommand{\thesection}{\arabic{section}}

\renewcommand\thefootnote{*\arabic{footnote}}

\title{The Neukirch-Uchida theorem with restricted ramification}
\author{Ryoji Shimizu}
\date{
}
\maketitle


\begin{abstract}
Let $K$ be a number field and $S$ a set of primes of $K$.
We write $K_S/K$ for the maximal extension of $K$ 
unramified outside $S$ and $G_{K,S}$ for its Galois group. 
In this paper, we prove the following generalization of the Neukirch-Uchida theorem under some assumptions: 
``For $i=1,2$, let $K_i$ be a number field and $S_i$ a set of primes of $K_i$. If $G_{K_1,S_1}$ and $G_{K_2,S_2}$ are isomorphic, then $K_1$ and $K_2$ are isomorphic.''
Here the main assumption is that 
the Dirichlet density of $S_i$ is not zero for 
at least one $i$.
A key step of the proof is to recover group-theoretically the $l$-adic cyclotomic character of an open subgroup of $G_{K,S}$ for some prime number $l$.
\end{abstract}

\tableofcontents

\section{Introduction}
Let $K$ be a number field and $S$ a set of primes of $K$.
We write $K_S/K$ for the maximal extension of $K$
 unramified outside $S$ and $G_{K,S}$ for its Galois group. 
The goal of this paper 
is to prove the following generalization of the Neukirch-Uchida theorem\footnote{
For the original version of the Neukirch-Uchida theorem, see \cite{Neukirch}
and \cite{Uchida}.
} 
under as few assumptions as possible: 
``For $i=1,2$, let $K_i$ be a number field and $S_i$ a set of primes of $K_i$. If $G_{K_1,S_1}$ and $G_{K_2,S_2}$ are isomorphic, then $K_1$ and $K_2$ are isomorphic.''
For this, as in the proof of the Neukirch-Uchida theorem (cf. \cite{NSW}, Chapter X\hspace{-.1em}I\hspace{-.1em}I), we first characterize group-theoretically the decomposition groups in $G_{K,S}$,
 and then obtain an isomorphism of fields using them.

In the previous work \cite{Ivanov}, Ivanov showed that if $S$ contains all primes at infinity and all nonarchimedean primes above some prime $l$, the data of the $l$-adic cyclotomic character of some open subgroup of $G_{K,S}$ is equivalent to the data of the decomposition groups in $G_{K,S}$ at nonarchimedean primes in $S$.
Motivated by this, 
in \S 1, assuming that $S$ contains all primes at infinity and all nonarchimedean primes above some prime $l$ and that $S$ satisfies a certain condition $(\star_l)$ (Definition \ref{1.15}), 
we recover group-theoretically the $l$-adic cyclotomic character of $G_{K,S}$ modulo the torsion part (Theorem \ref{1.17}).
In particular, taking an open subgroup $U$ of $G_{K,S}$ corresponding to an extension of $K$ containing $\mu_l$ (resp. $\mu_4$) if $l \neq 2$ (resp. $l=2$), we recover the $l$-adic cyclotomic character of $U$.
The proof is based on the previous work \cite{Saidi-Tamagawa}, $\S 4$, where 
it plays an important role to study carefully
 the structure of annihilators of certain modules
 over the (multivariable in general) Iwasawa algebra associated to the maximal pro-$l$ abelian torsion free quotient of $\Gal(\overline{K}/K)$.
Instead, in this paper, 
we consider all quotients of $G_{K,S}$ which are isomorphic to $\bZ_l$.
This makes it necessary to characterize group-theoretically the cyclotomic $\bZ_l$-extension, but allows the assumption on $S$ to be condition $(\star_l)$ which is weaker than ``the Dirichlet density of $S$ is not zero'' (Proposition \ref{1.20}).

In \S 2, based on the results in \S 1 and \cite{Ivanov}, we obtain the ``local correspondence'': 
a one-to-one correspondence between the sets of the decomposition groups in $G_{K_i,S_i}$ at nonarchimedean primes in $S_i$ for $i=1,2$ (Theorem \ref{2.5}).
Further, if the decomposition groups are large enough, this correspondence 
turns out to be ``good'' in the sense that 
some local invariants (for example, the residue characteristic, the order of the residue field and the set of Frobenius lifts)
are preserved.
It follows from \cite{Chenevier-Clozel} that the decomposition groups are actually large enough, under a certain extra assumption on $S_i$ for one $i$.

In \S 3, we 
develop a way to
show the existence of an isomorphism between $K_1$ and $K_2$ 
assuming that the good local correspondence holds.
In the previous work \cite{Ivanov3}, Ivanov showed that $K_1$ and $K_2$ are isomorphic, 
if 
$K_1$ is totally imaginary and Galois over $\bQ$, 
$S_1$ 
is ``stable''\footnote{
We say that 
$S_1$ is stable if 
there are a subset $S_0 \subset S_1$, a finite subextension $K_{1,S_1}/L_0/K_1$ and an $\epsilon \in \bR_{>0}$ such that
the set $S_0(L)$ of primes of $L$ obtained as the inverse image of $S_0$
 has Dirichlet density  $\delta(S_0(L))>\epsilon$ 
for any finite subextension $K_{1,S_1}/L/L_0$.
}, 
and so on.
The result in 
\S 3 is a generalization of this result.
We do not have to assume $K_i/\bQ$ is Galois, and 
our assumption about the 
density 
of $S_i$ is much weaker (see condition (b) in Proposition \ref{4f.3}).
In the argument 
some properties of 
the quotient of $G_{K,S}$ corresponding to 
the maximal multiple $\bZ_l$-extension of $K$, whose structure is essentially determined by the number of complex primes, 
plays an important role. 
However, we can also use the result
in the case that $K_i$ is totally real.

In order to use the result in 
\S 3, 
we 
prove in \S 4 some formulas 
about the Dirichlet density. 
By one of them, we obtain another way to 
show easily that $K_1$ and $K_2$ are isomorphic
if 
$K_i/\bQ$ is Galois for $i=1,2$, 
the good local correspondence holds, 
the Dirichlet density of $S_i$ is larger than $1/2$ for one $i$, 
and so on (Proposition \ref{3.3}).

In \S 5, we prove the main theorems
using the results obtained so far (Theorem \ref{4.0}, Theorem \ref{4.1} and Theorem \ref{4.3}).
Lastly we show that if the Dirichlet densities of $S_1$ and $S_2$ are large enough, 
$K_1$ and $K_2$ are isomorphic (Theorem \ref{4.9}). 

\section*{Acknowledgements}
The author would like to thank Professor Akio Tamagawa for 
helpful advices and carefully reading preliminary versions of the present paper.

The author 
would also like to thank 
the referee for 
careful comments on a former version of this paper, 
in particular, 
on the proof of Proposition \ref{2.0}.

\section*{Notations}
\begin{itemize}[leftmargin=*]
\item[$\bullet$]
Given a set $A$ we write $\# A$ for its cardinality.

\item[$\bullet$]
For a profinite group $G$, let $\overline{[G, G]}$ be the closed subgroup of $G$ which is (topologically) generated by the commutators in $G$. We write $G^{\ab} \defeq G/\overline{[G,G]}$
for the maximal abelian quotient of $G$.

\item[$\bullet$]
Given a profinite group $G$ 
and a prime number $l$, we write $G^{(l)}$ for the maximal
pro-$l$ quotient of $G$.

\item
Given a Galois extension $L/K$, we write $G(L/K)$ for its Galois group $\Gal(L/K)$.
Given a field $K$, we write $\overline{K}$ for a separable closure of $K$, 
and $G_K$ for the absolute Galois group $G(\overline{K}/K)$ of $K$.

\item
Given a field $K$, we write $K^{\ab}$ for the maximal abelian subextension of $\overline{K}/K$, which corresponds to the quotient $G_K \twoheadrightarrow G_K^{\ab}$.

\item
Given a field $K$ and a prime number $l$, we write $K^{(l)}$ for the maximal pro-$l$ subextension of $\overline{K}/K$, which corresponds to the quotient $G_K \twoheadrightarrow G_K^{(l)}$.

\item
A number field is a finite extension 
 of the field of rational numbers $\bQ$. 
For an (a possibly infinite) algebraic extension $F$ of $\bQ$, 
we write 
$\widetilde{F}$ for the Galois closure of $F/\bQ$, 
$P=P_F$ 
for the set of primes  
of $F$, 
$P_\infty=P_{F,\infty}$ for the set of archimedean primes of $F$, 
and, 
for a prime number $l$, 
$P_l=P_{F,l}$ for the set of nonarchimedean primes of $F$ above $l$. 
Further, for a set of primes $S \subset P_F$, 
we set $S_f\defeq S \setminus P_\infty$, 
$P_S \defeq \{ p \in P_{\bQ} \mid P_{F,p} \subset S \}$.
For $\bQ \subset F \subset F' \subset \overline{\bQ}$, 
we write 
$S(F')$ for the set of primes of $F'$ above the primes in $S$: 
$S(F') \defeq \{ \p \in P_{F'} \mid \p|_F \in S \}$.
For convenience, we consider 
that $F'/F$ is ramified at a complex prime of $F'$ if it is above a real prime of $F$.
We write $F_S/F$ for the maximal extension of $F$ unramified outside $S$ and $G_{F,S}$ for its Galois group.
When $P_\infty \subset S$, we set 
$\mathcal O_{F,S} \defeq \{ a \in F \mid v_\p(a) \geq 0 \text{ for all } \p \notin S \}$, where $v_\p$ is the (normalized) exponential 
valuation associated to $\p$.

\item
Given an algebraic extension $K$ of $\bQ$ and $\p \in P_{K,f}$, 
we write $\kappa(\p)$ for the residue field at $\p$. 
When $K$ is a number field, we write $K_\p$ for the completion of $K$ at $\p$, 
and, in general, we write $K_\p$ for the union of $K'_{\p|_{K'}}$ for finite subextensions $K'/\bQ$ of $K/\bQ$.

\item
Let $L/K$ be a finite extension of number fields and $\q \in P_{L,f}$, 
and set $\p = \q|_{K}$.
We write $f_{\q,L/K} \defeq [\kappa(\q):\kappa(\p)]$.
We also write $f_{\p,L/K} = f_{\q,L/K}$, when no confusion arises.
We write $\cs(L/K)$ 
(resp. $\Ram(L/K)$) 
for the set of nonarchimedean primes of $K$
which split completely 
(resp. are ramified) 
in $L/K$.

\item
Let $K$ be a number field and $\p \in P_{K,f}$, 
and set $p = \p|_{\bQ}$.
Define the residual degree (resp. the local degree) of $\p$ 
as 
 $f_{\p,K/\bQ}$ 
(resp. $[K_\p:\bQ_p]$).
We set $\frak{N}(\p) \defeq p^{f_{\p,K/\bQ}}$.

\item
For a number field $K$ and a set of primes $S \subset P_K$, 
we set 
\begin{equation*}
\delta_{\sup}(S) \defeq  
\limsup_{s \to 1+0} \frac{\sum_{\p \in S_f} \frak{N}(\p)^{-s}}{\log{\frac{1}{s-1}}}
,\ 
\delta_{\inf}(S) \defeq  
\liminf_{s \to 1+0} \frac{\sum_{\p \in S_f} \frak{N}(\p)^{-s}}{\log{\frac{1}{s-1}}}
\end{equation*}
and 
if $\delta_{\sup}(S) = \delta_{\inf}(S)$, 
then write $\delta(S)$ (the Dirichlet density of $S$) for them.
The term 
``$\delta(S) \neq 0$''
will always mean that 
$S$ has positive Dirichlet density 
or 
$S$ does not have Dirichlet density.
Note that 
$\delta(S) \neq 0$ 
if and only if 
$\delta_{\sup}(S) > 0$.

\item
For $\bQ \subset F \subset F' \subset 
\overline{\bQ}$ with $F'/F$ Galois, $\q \in P_{F',f}$ and $\p = \q|_F$, 
write $G(F'/F)_{\q}=
D_{\q}(F'/F) 
\subset G(F'/F)$ for the decomposition group (i.e. the stabilizer) of $\q$ in $G(F'/F)$. 
We sometimes write $D_{\q} = D_{\q}(F'/F)$, when no confusion arises.
Further, we also write 
$D_{\p} 
= G(F'/F)_{\p} 
= D_{\p}(F'/F) 
= D_{\q}(F'/F)$, when no confusion arises.
Note that $D_{\p}$ is only defined up to conjugation.
There exists a canonical isomorphism $D_{\q}(F'/F) \simeq G(F'_\q/F_\p)$,
and 
we will identify $D_{\q}(F'/F)$ with $G(F'_\q/F_\p)$ via this isomorphism.

\item
Let $p$ be a prime number.
A 
$p$-adic field is a finite extension 
 of the field of $p$-adic numbers $\bQ_p$. 
Let $\kappa$ be an (a possibly infinite) algebraic extension of $\bQ_p$.
We write $V_{\kappa}$ (resp. $I_{\kappa}$) the wild inertia (resp. inertia) subgroup of $G_\kappa$, 
and 
$\kappa^{\tr}$ (resp. $\kappa^{\ur}$) for the subextension of $\overline{\kappa}/\kappa$ corresponding to $V_{\kappa}$ (resp. $I_{\kappa}$), 
and set 
$G_\kappa^{\tr} \defeq G_\kappa/V_{\kappa}$ and 
$G_\kappa^{\ur} \defeq G_\kappa/I_{\kappa}$.
Let $\lambda/\kappa$ be a Galois extension.
We say that $G(\lambda/\kappa)$ is full if $\lambda$ is algebraically closed. 
We write $I(\lambda/\kappa)$ for the inertia subgroup of $G(\lambda/\kappa)$.
When $\kappa$ is a $p$-adic field, 
we say 
that 
an element of $G(\lambda/\kappa)/I(\lambda/\kappa)$ is the Frobenius element 
if it induces the automorphism of the residue field of $\lambda$ mapping every element to its $q$-th power, where $q$ is the order of the residue field of $\kappa$, 
and 
that an element of $G(\lambda/\kappa)$ is a Frobenius lift if its image under $G(\lambda/\kappa) \twoheadrightarrow 
G(\lambda/\kappa)/I(\lambda/\kappa)$ is equal to the Frobenius element.

\item
Given an abelian group $A$, 
we write $A_{\tor}$ for the torsion subgroup of $A$.

\item
Given an abelian profinite group $A$,
we write $\overline{A_{\tor}}$ for the closure in $A$ of $A_{\tor}$,
 and set $A^{/\tor} \defeq A/\overline{A_{\tor}}$.

\item
Let $G$ be a group which acts on an abelian group $A$.
We write $I_G A$ for the subgroup of $A$ generated by the elements $\sigma a-a,\ a\in A,\ \sigma \in G$, and set $A_G\defeq A/I_G A$.

\item
Given a commutative ring $R$, an $R$-module $M$,
 and a subset $S$ of $M$, we write $\langle S\rangle_R \subset M$
  (or simply $\langle S\rangle$ if there is no risk of confusion) for the $R$-submodule of $M$ generated by the elements of $S$. 
In particular, given $x \in M$ we set $\langle x \rangle = \langle x \rangle_R \defeq \langle \{ x \} \rangle_R \subset M$.
Given $x \in M$ we write $\Ann_R(x) \defeq \{ r \in R \mid rx = 0 \}$ for the annihilator of $x$ in $R$. 
We write $M_{R\text{-}\tor} \defeq \{ m \in M \mid rm = 0 \text{ for some non-zero-divisor }  r \in R\}$. 
Given $r \in R$ we write $M[r] \defeq \{ m \in M \mid rm = 0 \}$ 
and
$M[r^\infty] \defeq \bigcup_{n \in \bZ_{>0}} M[r^n]$.
An $R$-submodule $N$ of $M$ is called $R$-cofinite if the quotient $M/N$ is a finitely generated $R$-module.

\item
Given a field $K$, we write $\mu (K)$ for the group consisting of the roots of unity in $K$.
For $n \in \bZ_{>0}$ 
distinct from the characteristic of $K$, 
we write $\mu_n=\mu_n (\overline{K}) \subset \mu (\overline{K})$ for the subgroup of order $n$.
For a prime number $l$ distinct from the characteristic of $K$, 
we set 
$\mu_{l^\infty} \defeq \bigcup_{n \in \bZ_{>0}} \mu_{l^n}(\overline{K}) \subset \mu (\overline{K})$.

\item
Let $l$ be a prime number. We set $\ltilde \defeq l$\ (resp.\ $\ltilde \defeq 4$) for $l\neq2$\ (resp.\ $l=2$).

\end{itemize}

\section{Recovering the $l$-adic cyclotomic character}

In this section, 
let $K$ be a number field, and
fix a prime number $l$.
We set $\Sigma =\Sigma_K \defeq \{l,\infty\}(K)$.

Let $K_{\infty}/K$ be a $\bZ_l$-extension, and set $\Gamma \defeq G(K_{\infty}/K)$.
We write $K_{\infty,0}/K$ for the cyclotomic $\bZ_l$-extension, and set $\Gamma_0=\Gamma_{K,0} \defeq G(K_{\infty,0}/K)$.
Note that $K_{\infty}/K$ is unramified outside $\Sigma$.
For $\p\in P_{K}\setminus \Sigma$, we write $\Gamma_{\p}$ for the decomposition group at $\p$ in $\Gamma$.
Then $\Gamma_{\p}$ is topologically generated by the Frobenius element $\gamma_{\p}$ at $\p$.

Let $S \subset P_K$ be a set of primes of $K$.
In the rest of this section, we assume $S\supset \Sigma$.
Then $\mu_{l^\infty} \subset K_S$, and we write $\chil=\chi_K^{(l)}$ for the $l$-adic cyclotomic character $G_{K,S}\rightarrow \Aut(\mu_{l^\infty})={\bZ_l}^{\ast}$.
The composite of 
$\chil$ 
and the first projection of the decomposition ${\bZ_l}^{\ast}=({1+\tilde{l}\bZ_l})\times{({\bZ_l}^{\ast})_{tor}}$ factors as $G_{K,S}\twoheadrightarrow \Gamma_0\rightarrow 1+\ltilde\bZ_l$, 
where we write $w=w_K:\Gamma_0\rightarrow 1+\ltilde\bZ_l$ for the second morphism.

In this section, we recover group-theoretically $w$ from $G_{K,S}$ under a certain assumption.

We denote by $\Lambda^{\Gamma}\defeq \bZ_l [[\Gamma]]$ the associated complete group ring
 (cf. \cite{NSW}, Chapter V, \S 2).
We also write $\Lambda$ for $\Lambda^{\Gamma}$ if there is no risk of confusion. (The same applies hereinafter.)
Given a generator $\gamma$ of $\Gamma$, 
we have an isomorphism 
of $\bZ_l$-algebras
$\Lambda\isom \bZ_l[[T]]$,  $\gamma\mapsto 1+T$. 
(See \cite{NSW}, (5.3.5) Proposition.)
More generally, let $\mathcal O/\bZ_l$ be a finite extension of (complete) discrete valuation rings.
Then we denote by 
$\Lambda_\mathcal O=\Lambda_\mathcal O^{\Gamma}
\defeq \mathcal O[[\Gamma]]=\Lambda\otimes_{\bZ_l}\mathcal O
\simeq \mathcal O [[T]]$
 the associated complete group ring
 over $\mathcal O$ (cf. loc. cit.).
Note that this is a noetherian UFD.

Consider the exact sequence 
$1\to H_S\to G_{K,S}\to \Gamma \to 1$, 
where
$H_S\defeq \Ker (G_{K,S}\twoheadrightarrow \Gamma)$.
By pushing out this sequence by the projection 
$H_S\twoheadrightarrow X_S=X_S^{\Gamma}\defeq ({H_S}^{(l)})^{\ab}$
we obtain an exact sequence $1\to X_S\to Y_S\to \Gamma \to 1$.
Note that 
$G_{K,S}\twoheadrightarrow Y_S$ can be reconstructed group-theoretically from $G_{K,S}\twoheadrightarrow \Gamma$ by its very definition,
and
 $X_S$ has a natural structure of $\Lambda$-module of which $\Ker (X_S\twoheadrightarrow X_\Sigma)$ is a $\Lambda$-submodule.

We set
\begin{equation*}
\begin{split} 
&(S\setminus \Sigma)^{fd} \defeq \{ \p\in S\setminus \Sigma \mid \text{ $\p$ is finitely decomposed in $K_{\infty}/K$} \} ,\\
&(S\setminus \Sigma)^{cd} \defeq \{ \p\in S\setminus \Sigma \mid \text{ $\p$ is completely decomposed in $K_{\infty}/K$} \}.
\end{split}
\end{equation*}
Note that 
$S\setminus \Sigma = (S\setminus \Sigma)^{fd} \coprod (S\setminus \Sigma)^{cd}$.

For $\p\in (S\setminus \Sigma)^{fd}$ with $\mu_l\subset K_\p$,
the local $l$-adic cyclotomic character 
$G_{K_\p}\rightarrow \Aut(\mu_{l^\infty})={\bZ_l}^{\ast}$
factors as $G_{K_\p}\twoheadrightarrow \Gamma_{\p}\rightarrow 
{\bZ_l}^{\ast}$
because $\Gamma_{\p} = G(K_\p(\mu_{l^\infty})/K_\p)$,
where we write $\chilp:\Gamma_{\p} \rightarrow {\bZ_l}^{\ast}$ for the second homomorphism.
Further, when $\mu_{\ltilde}\subset K_\p$ and $\Gamma =\Gamma_0$, we have $w|_{\Gamma_{\p}}=\chilp$.

\begin{prop}\label{1.1}
Assume that the weak Leopoldt conjecture holds for $K_{\infty}/K$.
Then there exists a canonical exact sequence of $\Lambda$-modules
\begin{equation*}
0\to \underset{\p\in S\setminus \Sigma}{\prod} \Ind_{\Gamma}^{\Gamma_{\p}}(I(K_\p^{(l)}/K_\p)_{
G_{K_{\infty,\p_\infty}}
})\to X_S\to X_{\Sigma}\to 0,\end{equation*}
where 
$\p_\infty \in P_{K_\infty}$ is a prime above $\p$, 
$I(K_\p^{(l)}/K_\p)$ is considered as a $G_{K_{\p}}$-module in a canonical manner, 
and 
$\Ind$ stands for the compact induction 
(cf. \cite{NSW}, Notation above (11.3.5) Theorem).
Further,
\begin{equation*}
\Ind_{\Gamma}^{\Gamma_{\p}}(I(K_\p^{(l)}/K_\p)_{G_{K_{\infty,\p_\infty}}}) \simeq
\begin{cases}
\Lambda /\langle \gamma_{\p}-\chilp(\gamma_{\p})
\rangle, &\mu_l\subset K_{\p}\text{ and }\p\in (S\setminus \Sigma)^{fd}, \\
\Lambda /l^{t_\p}, &\mu_l\subset K_{\p}\text{ and }\p\in (S\setminus \Sigma)^{cd}, \\
0, &\mu_l\not\subset K_{\p}, 
\end{cases}
\end{equation*}
where $l^{t_\p}=\# \mu (K_\p)[l^\infty]$.
\end{prop}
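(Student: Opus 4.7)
Realize $X_S$ and $X_\Sigma$ as Galois groups over $K_\infty$ of maximal pro-$l$ abelian extensions with prescribed ramification, compute the kernel of the natural surjection $X_S \twoheadrightarrow X_\Sigma$ as a direct product of local inertia contributions, and assemble each contribution at $\p \in S \setminus \Sigma$ into an induced $\Lambda$-module via the transitive action of $\Gamma$ on primes of $K_\infty$ above $\p$. Concretely, since $S \supset \Sigma$ contains every prime of $K$ ramified in $K_\infty/K$, one checks first that $K_S$ coincides with the maximal extension of $K_\infty$ unramified outside $S(K_\infty)$, so $X_S = G(L_S/K_\infty)$ where $L_S$ is the maximal pro-$l$ abelian subextension of $K_S/K_\infty$, and similarly $X_\Sigma = G(L_\Sigma/K_\infty)$ with $L_\Sigma \subset L_S$. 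The kernel of $X_S \twoheadrightarrow X_\Sigma$ is then $\Lambda$-generated by the inertia subgroups at primes above $S \setminus \Sigma$; choosing one prime $\p_\infty$ of $K_\infty$ above each $\p \in S \setminus \Sigma$, the $\Gamma$-orbit of $\p_\infty$ assembles its contribution into $\Ind_\Gamma^{\Gamma_\p}(I_{\p_\infty})$, where $I_{\p_\infty} \subset X_S$ is the local inertia, which a standard reduction identifies with the coinvariants $I(K_\p^{(l)}/K_\p)_{G_{K_{\infty,\p_\infty}}}$.

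\textbf{Local computation.} Since $\p \nmid l$, the pro-$l$ inertia at $\p$ is purely tame: a Kummer-theoretic argument shows $I(K_\p^{(l)}/K_\p) = 0$ when $\mu_l \not\subset K_\p$ (a totally ramified degree-$l$ Galois extension in residue characteristic $\neq l$ would force $\mu_l$ to appear, incompatible with the pro-$l$ hypothesis), and $I(K_\p^{(l)}/K_\p) \simeq \bZ_l(1)$ as a $G_{K_\p}$-module when $\mu_l \subset K_\p$. In the completely decomposed case, $\Gamma_\p = 1$ and the coinvariants are $\bZ_l(1)_{G_{K_\p}} = \bZ_l/l^{t_\p}$, so induction yields $\Lambda/l^{t_\p}$. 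In the finitely decomposed case the key observation is that $K_{\infty,\p_\infty}$ is an unramified (since $\p \notin \Sigma$) pro-$l$ $\bZ_l$-subextension of $K_\p$, hence sits inside the unique unramified pro-$l$ $\bZ_l$-extension $K_\p(\mu_{l^\infty})$; since a closed subgroup of $\bZ_l$ is either trivial or isomorphic to $\bZ_l$, the nontriviality of $\Gamma_\p$ forces $K_{\infty,\p_\infty} = K_\p(\mu_{l^\infty})$. Thus $\mu_{l^\infty} \subset K_{\infty,\p_\infty}$, $G_{K_{\infty,\p_\infty}}$ acts trivially on $\bZ_l(1)$, the coinvariants are $\bZ_l$ with $\Gamma_\p$ acting by $\chilp$, and induction produces $\Lambda/\langle \gamma_\p - \chilp(\gamma_\p)\rangle$.

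\textbf{Main obstacle.} The nontrivial point is the injectivity of the assembled map $\prod_{\p \in S\setminus\Sigma} \Ind_\Gamma^{\Gamma_\p}(\cdots) \hookrightarrow X_S$, i.e., the absence of $\Lambda$-linear relations among the local inertia images in $X_S$; this is exactly where the weak Leopoldt hypothesis enters. The route I would take is to pass to the inverse limit along the tower $K_\infty/K$ of the finite-level Poitou--Tate nine-term exact sequences applied to $\mu_{l^n}$, in which the kernel of our local-to-global inertia map is identified with an Iwasawa-theoretic $H^2$-term attached to $\bQ_l/\bZ_l$ whose vanishing is one of the standard equivalent formulations of weak Leopoldt for $K_\infty/K$. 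Matching this Poitou--Tate obstruction precisely with the kernel appearing on the left of our exact sequence, and verifying naturality (justifying ``canonical''), is the most delicate step of the argument; once this is in place, the local computations above together with the standard formalism of induced $\Lambda$-modules complete the proof.
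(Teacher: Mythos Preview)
Your approach is correct and is essentially a reconstruction of the proof of \cite{NSW}, (11.3.5) Theorem, which is precisely what the paper cites: the paper's own proof consists of invoking that reference (noting that the hypothesis ``$K$ totally imaginary if $l=2$'' there is not actually used), and then passing to the projective limit over finite subsets of $S$ to handle the case where $S$ is infinite.

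The one point you do not make explicit is this finite-to-infinite reduction. Your Poitou--Tate argument is formulated for finite $S$ (the nine-term sequence requires the set of places to be finite), so as written your injectivity argument only directly gives the exact sequence for finite $S$; you should then take the inverse limit over the finite subsets $\Sigma \subset S' \subset S$ to obtain the general case, using that inverse limits of compact modules are exact. This is routine but worth stating, since for infinite $S$ the left-hand term is a genuine product rather than a direct sum. Otherwise your local computations and your identification of the obstruction to injectivity with the weak Leopoldt condition match what is done in \cite{NSW}.
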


\begin{proof}
If $S$ is a finite set, the first assertion follows from \cite{NSW}, (11.3.5) Theorem.
(In \cite{NSW}, Chapter X\hspace{-.1em}I, \S3, $K$ is assumed to be totally imaginary if $l=2$. However, in the proof of \cite{NSW}, (11.3.5) Theorem, this assumption is not used.)
If $S$ is not a finite set, passing to the projective limit over all finite subsets of $S$ we obtain the desired exact sequence.
The second assertion follows from the proof of \cite{NSW}, (11.3.5) Theorem.
\end{proof}

Given a finitely generated free $\bZ_l$-module $A$,
we set $A^{\prim}\defeq A\setminus lA$.
For any $\alpha \in A\setminus \{ 0\}$, 
there exist a unique element $\tilde \alpha \in A^{\prim}$ and a unique element $m_\alpha \in \bZ_{\geq 0}$, 
such that $l^{m_\alpha}\tilde \alpha =\alpha$.
In particular, 
for $\p\in (S\setminus \Sigma)^{fd}$
and $\gamma_{\p} \in \Gamma$, 
we always 
write 
$\tilde\gamma_{\p}$ (resp. $m_{\gamma_{\p}}$) 
for
the unique element of $\Gamma^{\prim}$ (resp. of $\bZ_{\geq 0}$) 
such that $\tilde\gamma_{\p}^{l^{m_{\gamma_{\p}}}} = \gamma_{\p}$,
and set $m_{\p}\defeq m_{\gamma_{\p}}$.
For $\p\in (S\setminus \Sigma)^{fd}$ with $\mu_{\ltilde} \subset K_{\p}$, 
we set $(\gamma_{\p}', \chilp(\gamma_{\p})')\defeq {(\gamma_{\p}, \chilp(\gamma_{\p}))} \tilde{} \in \Gamma \times (1+\ltilde\Bbb Z_l)$.

For $\p\in S\setminus \Sigma$, 
we set $J_{\p}=J_{\p}^{\Gamma}\defeq  \Ind_{\Gamma}^{\Gamma_{\p}}(I(K_\p^{(l)}/K_\p)_{G_{K_{\infty,\p_\infty}}})$ and 
$J=J^{\Gamma}\defeq \underset{\p\in S\setminus \Sigma}{\prod} J_{\p}$.
For $\p\in (S\setminus \Sigma)^{fd}$ with $\mu_{\ltilde} \subset K_{\p}$, 
we set $J_\p'\defeq J_\p[\gamma_{\p}'-\chilp(\gamma_{\p})'] \subset J_\p$.

\begin{lem}\label{1.2}
The weak Leopoldt conjecture is true for $K_{\infty}/K$
if and only if \\
$H^2(G(K_S/K_{\infty}),\ \bQ_l/\bZ_l)=0$.
Further, 
the weak Leopoldt conjecture is true for $K_{\infty,0}/K$.
\end{lem}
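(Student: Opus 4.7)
The plan is to rely on standard results from Iwasawa-theoretic cohomology, as developed in \cite{NSW}. The first assertion is essentially the equivalence between the ``definitional'' formulation of the weak Leopoldt conjecture and its cohomological reformulation, while the second assertion is Iwasawa's theorem on the cyclotomic $\bZ_l$-extension.

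For the first assertion, recall that for a finite set $T$ of primes of $K$ containing $\Sigma$ and all primes ramified in $K_{\infty}/K$, the weak Leopoldt conjecture for $K_{\infty}/K$ is standardly formulated as the vanishing $H^2(G(K_T/K_{\infty}),\ \bQ_l/\bZ_l) = 0$ (see, e.g., \cite{NSW}, Chapter X). In our setup $K_{\infty}/K$ is unramified outside $\Sigma \subset S$, so any finite set $T$ with $\Sigma \subset T \subset S$ meets the hypotheses. What remains is to check that $H^2(G(K_S/K_{\infty}),\ \bQ_l/\bZ_l)$ is independent of the choice of such $S$, which will reduce the possibly infinite $S$ considered here to the finite case used in the definition. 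This independence follows from the localization exact sequence in Galois cohomology combined with the vanishing of the local cohomology $H^2(G_{K_{\infty,\p}},\ \bQ_l/\bZ_l)$ at nonarchimedean $\p$ unramified in $K_{\infty}/K$; the latter holds because the relevant local Galois group then has $l$-cohomological dimension at most $1$. Archimedean primes and the prime $l=2$ require only minor bookkeeping, since in those cases either the residue field is trivially handled or one passes to the pro-$l$ quotient as above.

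For the second assertion, I would invoke Iwasawa's classical theorem that the weak Leopoldt conjecture holds for the cyclotomic $\bZ_l$-extension of any number field (cf. \cite{NSW}, (10.3.25)). The proof uses the interpretation of $H^2(G(K_T/K_{\infty,0}),\ \bQ_l/\bZ_l)$ as a $\Lambda^{\Gamma_0}$-module and shows that it must vanish using the specific structure of the cyclotomic tower; this is standard and I would simply cite it.

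I do not expect any serious obstacle in executing this plan: both claims are well-known results in Iwasawa theory. The only mild technicality is the $S$-independence argument in the first part, which is needed to justify formulating the weak Leopoldt conjecture without reference to $S$ at all.
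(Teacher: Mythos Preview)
Your proposal is correct and takes essentially the same approach as the paper: both assertions are reduced to standard results in \cite{NSW}, with the paper citing (2.6.9), (10.3.22), and (10.3.25) directly, while you sketch the $S$-independence step (handled in the paper by those citations) via the localization sequence and the fact that $\cd_l G_{K_\p} \leq 1$ for $\p \nmid l$. The second assertion is, in both cases, exactly \cite{NSW} (10.3.25).
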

\begin{proof}
These assertions follow immediately from
\cite{NSW}, (2.6.9) Theorem, (10.3.22) Theorem and (10.3.25) Theorem.
\end{proof}

\begin{lem}\label{1.3}
Assume that the weak Leopoldt conjecture holds for $K_{\infty}/K$.
Then \\
$\# \{ \p \in (S\setminus \Sigma)^{cd} \mid \mu_l \subset K_{\p} \} <\infty$
if and only if 
$X_S[l^\infty]$ is a finitely generated $\Lambda$-module.
In particular, 
when $\mu_l\subset K$, 
these conditions are equivalent to $\# (S\setminus \Sigma)^{cd}<\infty$.
Further, 
$(S\setminus \Sigma)^{cd}=\emptyset$ for $K_{\infty,0}/K$.
\end{lem}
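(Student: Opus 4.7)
The plan is to reduce the first equivalence to analysis of $J[l^\infty]\subset X_S[l^\infty]$, where $J$ is as in Proposition \ref{1.1}, use Weierstrass preparation to kill the contribution of primes in $(S\setminus\Sigma)^{fd}$, and then invoke the topological Nakayama lemma. The ``In particular'' clause is then immediate, and the last assertion about the cyclotomic $\bZ_l$-extension is a short numerical check against $w$.

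From Proposition \ref{1.1} we have the exact sequence $0\to J\to X_S\to X_\Sigma\to 0$ of $\Lambda$-modules. Since $\Sigma$ is finite, $X_\Sigma$ is a finitely generated $\Lambda$-module (classical Iwasawa theory; cf.\ \cite{NSW}, Chapter XI), and because $\Lambda$ is Noetherian, passing to $l^\infty$-torsion yields the exact sequence $0\to J[l^\infty]\to X_S[l^\infty]\to X_\Sigma[l^\infty]$, whence $X_S[l^\infty]$ is finitely generated iff $J[l^\infty]$ is. I then dismantle $J=\prod_\p J_\p$ factor by factor: when $\mu_l\not\subset K_\p$ the factor vanishes; when $\mu_l\subset K_\p$ and $\p\in(S\setminus\Sigma)^{fd}$, fix a generator $\gamma=1+T$ of $\Gamma$ and write $\gamma_\p=\gamma^{a_\p}$ with $a_\p\neq 0$, so that $J_\p\cong\Lambda/((1+T)^{a_\p}-\frak{N}(\p))$; writing $a_\p=l^{m_\p}u$ with $u\in\bZ_l^\ast$ and reducing modulo $l$ (using $\frak{N}(\p)\equiv 1\pmod l$) shows $(1+T)^{a_\p}-\frak{N}(\p)$ has Weierstrass degree $l^{m_\p}\geq 1$, so Weierstrass preparation identifies $J_\p$ with $\Lambda/g(T)$ for a distinguished polynomial $g$ of degree $l^{m_\p}$; this is a free $\bZ_l$-module of finite positive rank, hence $l$-torsion free. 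Only primes in $E\defeq\{\p\in(S\setminus\Sigma)^{cd}\mid\mu_l\subset K_\p\}$ contribute, giving $J[l^\infty]=\prod_{\p\in E}\Lambda/l^{t_\p}$.

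To finish the first equivalence, $J[l^\infty]$ is finitely generated over $\Lambda$ iff $E$ is finite: finiteness of $E$ clearly suffices. For the converse, since $J[l^\infty]$ is a compact $\Lambda$-module, the topological Nakayama lemma reduces finite generation to finiteness of $J[l^\infty]/\fm J[l^\infty]$, where $\fm=(l,T)$; since $\fm$ is finitely generated, modding out commutes factorwise with the product, giving $J[l^\infty]/\fm J[l^\infty]\cong\prod_{\p\in E}\bF_l$, which is infinite precisely when $E$ is. For the ``In particular'' clause, $\mu_l\subset K$ forces $\mu_l\subset K_\p$ for every $\p$, whence $E=(S\setminus\Sigma)^{cd}$.

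For the final assertion, $\p\in(S\setminus\Sigma)^{cd}$ for $K_{\infty,0}/K$ means $\gamma_\p=1$ in $\Gamma_0$. But $w$ is injective on $\Gamma_0$, identifying it with an open subgroup of $1+\tilde{l}\bZ_l$, and $w(\gamma_\p)$ is by construction the projection of $\chi_K^{(l)}(\gamma_\p)=\frak{N}(\p)\in\bZ_l^\ast$ onto the torsion-free factor, so $w(\gamma_\p)=1$ would force $\frak{N}(\p)$ to lie in $(\bZ_l^\ast)_{\tor}$---impossible for a rational integer $\geq 2$. The main technical point throughout is the Weierstrass-preparation argument ensuring that primes in $(S\setminus\Sigma)^{fd}$ with $\mu_l\subset K_\p$ contribute no $l$-torsion to $J$; everything else is bookkeeping and a standard compact-Nakayama application.
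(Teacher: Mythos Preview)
Your approach is the same as the paper's: reduce to $J[l^\infty]$ via the exact sequence from Proposition~\ref{1.1}, show the factors at $\p\in(S\setminus\Sigma)^{fd}$ with $\mu_l\subset K_\p$ are $\bZ_l$-free (you use Weierstrass preparation explicitly; the paper cites the Division Lemma), and then analyze the product over $E$. Your argument for the cyclotomic $\bZ_l$-extension via $w$ is also fine, though the paper's one-liner (that $K_\p(\mu_{l^\infty})/K_\p$ is infinite) is shorter.

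There is one genuine gap. You assert $J[l^\infty]=\prod_{\p\in E}\Lambda/l^{t_\p}$ and that this is a compact $\Lambda$-module. But the exponents $t_\p$ need not be bounded---nothing in the definition of $E$ constrains how many $l$-power roots of unity lie in $K_\p$---and when they are unbounded the full product $\prod_{\p\in E}\Lambda/l^{t_\p}$ is \emph{not} $l^\infty$-torsion: an element $(x_\p)_\p$ is killed by some $l^n$ only if the orders of the $x_\p$ are uniformly bounded. In that case $J[l^\infty]$ is a proper, non-closed (hence non-compact) submodule of the product, so neither your identification nor the compact-Nakayama step is justified as written. The fix is immediate: work instead with $J[l]=\prod_{\p\in E}(\Lambda/l^{t_\p})[l]\cong\prod_{\p\in E}\Lambda/l$, which \emph{is} the full product; your Nakayama computation then gives $J[l]/\fm J[l]\cong\prod_{\p\in E}\bF_l$, and since $\Lambda$ is Noetherian, $J[l]$ not finitely generated forces $J[l^\infty]$ not finitely generated. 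The paper sidesteps the issue by observing directly that $J[l^\infty]\supset\bigoplus_{\p\in E}\Lambda/l^{t_\p}\twoheadrightarrow\bigoplus_{\p\in E}\Lambda/l$, the last module being visibly not finitely generated when $E$ is infinite.
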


\begin{proof}
By Proposition \ref{1.1}, 
there exists a canonical exact sequence of $\Lambda$-modules
\begin{equation*}0\to J[l^{\infty}]\to X_S[l^{\infty}]\to X_{\Sigma}[l^{\infty}].\end{equation*}
Then, by \cite{NSW}, (11.3.1) Proposition, 
$X_{\Sigma}$ is a finitely generated $\Lambda$-module, 
hence $X_{\Sigma}[l^{\infty}]$ is also a finitely generated $\Lambda$-module.
(In \cite{NSW}, Chapter X\hspace{-.1em}I, \S3, $K$ is assumed to be totally imaginary if $l=2$. However, in the proof of \cite{NSW}, (11.3.1) Proposition, this assumption is not used.)

We set 
$T^{fd} \defeq \{ \p \in (S\setminus \Sigma)^{fd} \mid \mu_l \subset K_{\p} \}$, $T^{cd} \defeq \{ \p \in (S\setminus \Sigma)^{cd} \mid \mu_l \subset K_{\p} \}$.
For $\p\in T^{fd}$, 
by the isomorphism 
$\Lambda\isom \bZ_l[[T]]$,  $\tilde\gamma_{\p}\mapsto 1+T$, 
we have an isomorphism
\begin{equation*}J_{\p}
\simeq
\Lambda /\langle \gamma_{\p}-\chilp(\gamma_{\p})
\rangle \simeq \bZ_l[[T]]/\langle (1+T)^{l^{m_{\p}}}-\chilp(\gamma_{\p})\rangle.\end{equation*}
By \cite{NSW}, (5.3.1) Division Lemma, 
$J_{\p}$ is a free $\bZ_l$-module of rank $l^{m_{\p}}$, 
in particular, a torsion free abelian group.
In addition, 
for $\p\in S\setminus \Sigma$ with $\mu_l \not\subset K_{\p}$, 
$J_{\p}=0$ by Proposition \ref{1.1}.
Therefore, 
we obtain an exact sequence of $\Lambda$-modules
\begin{equation*}0\to \underset{\substack{\p\in T^{cd}}}{\prod} J_{\p}\to J\to \underset{\substack{\p\in T^{fd}}}{\prod} J_{\p}\to 0,\end{equation*}
and an exact sequence of $\Lambda$-modules
\begin{equation*}0\to (\underset{\substack{\p\in T^{cd}}}{\prod} J_{\p})[l^{\infty}]\to J[l^{\infty}]\to (\underset{\substack{\p\in T^{fd}}}{\prod} J_{\p})[l^{\infty}]=0.\end{equation*}
Thus, 
$X_S[l^\infty]$ is a finitely generated $\Lambda$-module
if and only if 
$(\underset{\substack{\p\in T^{cd}}}{\prod} J_{\p})[l^{\infty}]$ is a finitely generated $\Lambda$-module.
By Proposition \ref{1.1}, 
for $\p \in T^{cd}$, 
$J_\p \simeq \Lambda /l^{t_\p}$ ($\ t_\p \geq 1$) is a finitely generated $\Lambda$-module.
So if $\# T^{cd}<\infty$, 
$(\underset{\substack{\p\in T^{cd}}}{\prod} J_{\p})[l^{\infty}]$ is a finitely generated $\Lambda$-module.

If $\# T^{cd}=\infty$, 
$(\underset{\substack{\p\in T^{cd}}}{\prod} J_{\p})[l^{\infty}]
\supset \underset{\substack{\p\in T^{cd}}}{\bigoplus} J_{\p}
\twoheadrightarrow  \underset{\substack{\p\in T^{cd}}}{\bigoplus} \Lambda /l$
and 
$ \underset{\substack{\p\in T^{cd}}}{\bigoplus} \Lambda /l$ is not a finitely generated $\Lambda$-module, 
so 
$(\underset{\substack{\p\in T^{cd}}}{\prod} J_{\p})[l^{\infty}]$ is not a finitely generated $\Lambda$-module.

For $K_{\infty,0}/K$, 
$(S\setminus \Sigma)^{cd}=\emptyset$ because
$K_\p (\mu_{l^\infty})/K_\p$ is an infinite algebraic extension 
for $\p \in S\setminus \Sigma$.
\end{proof}

\begin{definition}\label{1.4}
We say that $K_{\infty}/K$ and $S$ satisfy condition $(\dagger)$
if 
the weak Leopoldt conjecture is true for $K_{\infty}/K$ 
and 
$\# (S\setminus \Sigma)^{cd}<\infty$.
We say that $K_{\infty}/K$ and $S$ satisfy condition $(\dagger')$
if 
$H^2(G(K_S/K_{\infty}),\bQ_l/\bZ_l)=0$ 
and 
$X_S[l^\infty]$ is a finitely generated $\Lambda$-module.
\end{definition}

Note that condition $(\dagger')$ depends only on $G_{K,S}\twoheadrightarrow \Gamma$.

\begin{lem}\label{1.4.5}
If 
$K_{\infty}/K$ and $S$ satisfy condition $(\dagger)$, 
then they satisfy condition $(\dagger')$.
Further, 
if $\mu_l\subset K$, the converse is true.
In particular, 
for any $\Sigma \subset S \subset P_K$, 
$K_{\infty,0}/K$ and $S$ satisfy conditions $(\dagger)$ and $(\dagger')$.
\end{lem}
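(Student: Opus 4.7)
The plan is to read off the lemma as a direct combination of Lemmas~\ref{1.2} and~\ref{1.3}, matching up the two halves of each of the conditions $(\dagger)$ and $(\dagger')$. For the cohomological halves, Lemma~\ref{1.2} says exactly that the weak Leopoldt conjecture for $K_\infty/K$ is equivalent to $H^2(G(K_S/K_\infty),\bQ_l/\bZ_l)=0$, so these two conditions coincide with no extra hypothesis on $\mu_l$ or $S$. Hence everything reduces to comparing the two finiteness conditions.

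For the finiteness halves, I would apply Lemma~\ref{1.3}. Set $T^{cd} \defeq \{\p\in(S\setminus\Sigma)^{cd}\mid \mu_l\subset K_\p\}$. Since $T^{cd}\subset(S\setminus\Sigma)^{cd}$, the bound $\#(S\setminus\Sigma)^{cd}<\infty$ immediately forces $\#T^{cd}<\infty$, which by Lemma~\ref{1.3} is equivalent to $X_S[l^\infty]$ being a finitely generated $\Lambda$-module. This gives $(\dagger)\Rightarrow(\dagger')$. For the converse under the extra assumption $\mu_l\subset K$, one has $\mu_l\subset K_\p$ for every $\p\in P_K$, so $T^{cd}=(S\setminus\Sigma)^{cd}$, and Lemma~\ref{1.3} now provides the equivalence of the two finiteness conditions in both directions, yielding $(\dagger')\Rightarrow(\dagger)$.

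Finally, for the ``in particular'' assertion about $K_{\infty,0}/K$: Lemma~\ref{1.2} already records that the weak Leopoldt conjecture holds for the cyclotomic $\bZ_l$-extension, and the last sentence of Lemma~\ref{1.3} gives $(S\setminus\Sigma)^{cd}=\emptyset$ for this extension. So condition $(\dagger)$ holds for any $\Sigma\subset S\subset P_K$, and the first part of the lemma then supplies $(\dagger')$ as well. There is no genuine obstacle here; the lemma is a bookkeeping step that repackages the two preceding results into the form in which conditions $(\dagger)$ and $(\dagger')$ will be invoked in the sequel, the only mild subtlety being to note that the equivalence of the finiteness halves requires $\mu_l\subset K$ (without which one only controls primes $\p$ with $\mu_l\subset K_\p$), while the cyclotomic case needs no such hypothesis because $(S\setminus\Sigma)^{cd}$ is already empty.
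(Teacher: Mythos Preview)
Your proposal is correct and follows exactly the same approach as the paper, which simply writes ``The assertions follow immediately from Lemma~\ref{1.2} and Lemma~\ref{1.3}.'' Your write-up merely spells out this immediate deduction in more detail.
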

\begin{proof}
The assertions follow immediately from Lemma \ref{1.2} and Lemma \ref{1.3}.
\end{proof}

\begin{lem}\label{1.5}
Let $\mathcal O/\bZ_l$ be a finite extension of (complete) discrete valuation rings and $\fm\subset\mathcal O$ the maximal ideal. Let $\gamma,\gamma' \in\Gamma$, and $\alpha,\alpha'\in 1+\fm$. 
If
there exists
$\nu\in\Bbb Z_l$ such that $\gamma=(\gamma')^\nu$ and $\alpha=(\alpha')^\nu$, 
then  
$\gamma'-\alpha'$ divides $\gamma-\alpha$ in $\Lambda_{\mathcal O}$ 
(i.e. $\gamma-\alpha\in\langle \gamma'-\alpha'\rangle_{\Lambda_{\mathcal O}}$).
Further, 
when $\gamma'\in \Gamma^{\prim}$, the converse is true.
In particular, $(\gamma'-\alpha') \vert (\gamma-\alpha)$ implies $\gamma\in \langle \gamma'\rangle$.
\end{lem}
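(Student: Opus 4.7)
The plan is to reduce all three claims to power-series computations in $\mathcal O[[T]]$. Fix a topological generator $\gamma_0$ of $\Gamma$ and use the isomorphism $\Lambda_{\mathcal O}\isom\mathcal O[[T]]$ with $\gamma_0\mapsto 1+T$; writing $\gamma=\gamma_0^{\mu}$ and $\gamma'=\gamma_0^{\mu'}$ for uniquely determined $\mu,\mu'\in\bZ_l$, the elements $\gamma-\alpha$ and $\gamma'-\alpha'$ become $(1+T)^{\mu}-\alpha$ and $(1+T)^{\mu'}-\alpha'$.

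For the forward direction I would work in the quotient ring $R\defeq\Lambda_{\mathcal O}/\langle\gamma'-\alpha'\rangle$. Every ideal of the noetherian complete local ring $\Lambda_{\mathcal O}$ is closed (by Krull's intersection theorem), so $R$ is Hausdorff. The two maps $\bZ_l\to R$, $n\mapsto(\gamma')^n$ and $n\mapsto(\alpha')^n$, are continuous because $\gamma'\in\Gamma$ and $\alpha'\in 1+\fm$ each admit a continuous $\bZ_l$-exponentiation. They agree on $\bZ$ by the classical factorization $x^n-y^n=(x-y)(x^{n-1}+\cdots+y^{n-1})$ (with the analogous formula via inverses for $n<0$), since $\gamma'\equiv\alpha'$ in $R$. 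By continuity and density of $\bZ$ in $\bZ_l$, they agree at $\nu$, so $\gamma-\alpha=(\gamma')^\nu-(\alpha')^\nu\equiv 0$ modulo $\gamma'-\alpha'$, which is the required divisibility.

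For the converse under $\gamma'\in\Gamma^{\prim}$ I would instead use $\gamma'$ itself as the topological generator, so that $\gamma'-\alpha'=T-(\alpha'-1)$ is a distinguished polynomial of degree $1$ (because $\alpha'-1\in\fm$), and $\gamma=(\gamma')^\nu$ for a unique $\nu\in\bZ_l$. The Weierstrass Division Lemma (\cite{NSW}, (5.3.1)) applied to $(1+T)^\nu-\alpha$ with divisor $T-(\alpha'-1)$ produces a constant remainder, and substituting $T=\alpha'-1$ identifies it with $(\alpha')^\nu-\alpha$. The divisibility forces this to vanish, giving $\alpha=(\alpha')^\nu$.

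Finally, for the \emph{in particular} assertion I would compare Weierstrass degrees. Assuming $\gamma,\gamma'\neq 1$, the standard fact that $(1+T)^{\mu'}-1$ is a unit times a distinguished polynomial of degree $l^{v_l(\mu')}$, combined with the observation that shifting the constant by $1-\alpha'\in\fm$ does not alter the smallest index at which a unit coefficient appears, yields Weierstrass degree $l^{v_l(\mu')}$ for $\gamma'-\alpha'$, and similarly $l^{v_l(\mu)}$ for $\gamma-\alpha$. The divisibility then forces $v_l(\mu)\geq v_l(\mu')$, i.e., $\gamma\in\overline{\langle\gamma'\rangle}$. The trivial case $\gamma=1$ and the case $\gamma'=1$ (where $\gamma'-\alpha'=1-\alpha'\in\fm$ cannot divide $(1+T)^\mu-\alpha$ unless $\mu=0$, since the coefficient of $T^{l^{v_l(\mu)}}$ is a unit) are handled directly. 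The main delicate points will be the Hausdorffness of $R$ in the forward direction and the edge-case bookkeeping for the Weierstrass degrees; the core divisibility statements themselves are immediate from division by a distinguished polynomial.
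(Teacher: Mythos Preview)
Your argument is correct. The paper itself gives no details, simply citing \cite{Saidi-Tamagawa}, Lemma 4.3 (which treats the case $\gamma'\in\Gamma^{\prim}$) and asserting that the same proof applies; your proposal effectively reconstructs that proof. The forward direction via continuity in the Hausdorff quotient, and the converse via Weierstrass division by the degree-one distinguished polynomial $T-(\alpha'-1)$, are the standard moves and are presumably what \cite{Saidi-Tamagawa} does.

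One remark on the \emph{in particular} clause: since the paper phrases it as a consequence of the converse, another route---closer in spirit to how the paper later argues in Lemma~\ref{1.6}---is to pass to an extension $\mathcal O'$ containing the $l^{m'}$-th roots of $\alpha'$ (where $\gamma'=\tilde\gamma'^{\,l^{m'}}$), factor $\gamma'-\alpha'=\prod_{\eta\in\mu_{l^{m'}}}(\tilde\gamma'-\eta\beta')$, apply the converse to each prime factor $\tilde\gamma'-\eta\beta'$ (noting $\eta\beta'\in 1+\fm'$ since the residue field has characteristic $l$), and deduce $l^{m'}\mid\nu$ from the fact that the resulting exponent $\nu$ is independent of $\eta$. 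Your direct Weierstrass-degree comparison is cleaner and avoids the base change; the key input, that $\binom{\mu}{k}$ is a unit precisely at $k=l^{v_l(\mu)}$ among $1\le k\le l^{v_l(\mu)}$, follows from the valuation computation $v_l\binom{\mu}{k}=v_l(\mu)-v_l(k)$ in that range.
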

\begin{proof}
We can apply the same proof as that of 
\cite{Saidi-Tamagawa}, Lemma 4.3, where they assume that $\gamma'\in \Gamma^{\prim}$.
\end{proof}

\begin{lem}\label{1.6}
Let
$(\gamma, \alpha), (\gamma', \alpha')\in  (\Gamma \times (1+\ltilde\Bbb Z_l))^{\prim}$.
Assume that 
$\gamma-\alpha$ and $\gamma'-\alpha'$ are not coprime, 
then 
there exists
$\nu\in\Bbb Z_l^{\ast}$ such that $\gamma=(\gamma')^\nu$ and $\alpha=(\alpha')^\nu$, 
in particular, 
$\langle \gamma-\alpha \rangle_{\Lambda}=\langle \gamma'-\alpha' \rangle_{\Lambda}$.
\end{lem}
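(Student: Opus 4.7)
The strategy is to work in the UFD $\Lambda\simeq\bZ_l[[T]]$ via a chosen generator of $\Gamma$, use Weierstrass preparation to analyze a shared prime factor of $\gamma-\alpha$ and $\gamma'-\alpha'$, and conclude by applying Lemma~\ref{1.5} in both directions.

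First, I would fix a topological generator $\tilde\gamma\in\Gamma^{\prim}$ and identify $\Lambda\simeq\bZ_l[[T]]$ via $\tilde\gamma\mapsto 1+T$. Writing $\gamma=\tilde\gamma^c$, $\gamma'=\tilde\gamma^{c'}$ with $c,c'\in\bZ_l$, primitivity forces $(\gamma,\alpha)\neq(1,1)$ and hence $\gamma-\alpha\neq 0$. A direct computation shows $v_l(\binom{c}{k})=v_l(c)-v_l(k)\geq 1$ for $0<k<l^{v_l(c)}$ while $v_l(\binom{c}{l^{v_l(c)}})=0$, so for $c\neq 0$ the Weierstrass factor of $(1+T)^c-\alpha$ is distinguished of degree $l^{v_l(c)}$ and $l\nmid\gamma-\alpha$ in $\Lambda$. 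Non-coprimality then supplies a shared prime $\pi\in\Lambda$, which is either $l$ or an irreducible distinguished polynomial. If $\pi=l$: using $(1+T)^l\equiv 1+T^l\pmod l$, the congruence $(1+T)^c\equiv\alpha\pmod l$ forces $c=0$ (so $\gamma=1$), and symmetrically $\gamma'=1$; primitivity of $(1,\alpha),(1,\alpha')$ then gives $v_l(\log\alpha)=v_l(\log\alpha')=v_l(\ltilde)$, so $\nu:=\log\alpha/\log\alpha'\in\bZ_l^{\ast}$ satisfies $\alpha=(\alpha')^\nu$ and $\gamma=1=(\gamma')^\nu$.

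Otherwise $\pi$ admits a root $T_0\in\overline{\bQ_l}$ with $|T_0|<1$; set $\beta_0:=1+T_0\in 1+\fm_{\overline{\bQ_l}}$ and $L:=\log\beta_0$. Then $\beta_0^c=\alpha$ and $\beta_0^{c'}=\alpha'$ translate to $cL=\log\alpha$ and $c'L=\log\alpha'$, whence $c\log\alpha'=c'\log\alpha$ in $\bZ_l$. The critical step is to deduce $v_l(c)=v_l(c')$, whence $\nu:=c/c'\in\bZ_l^{\ast}$, and then $\log\alpha=\nu\log\alpha'$ gives $\alpha=(\alpha')^\nu$ while $c=\nu c'$ gives $\gamma=(\gamma')^\nu$. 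For this I would invoke primitivity: $v_l(c)\geq 1$ implies $\gamma\in\Gamma^l$, which forces $\alpha\notin(1+\ltilde\bZ_l)^l$, equivalently $v_l(\log\alpha)=v_l(\ltilde)$ (and symmetrically for $(c',\alpha')$); a case analysis on $(v_l(c),v_l(c'))\in\{0,{\geq}1\}^2$ applied to $v_l(c)+v_l(\log\alpha')=v_l(c')+v_l(\log\alpha)$, combined with $v_l(\log\alpha),v_l(\log\alpha')\geq v_l(\ltilde)$, rules out the mixed cases. The degenerate subcase $\alpha=1$ (or $\alpha'=1$) is treated separately: $\beta_0^c=1$ together with primitivity of $(\gamma,1)$ forces $c\in\bZ_l^{\ast}$ and $\beta_0=1$, so $\alpha'=\beta_0^{c'}=1$, reducing to $\gamma,\gamma'\in\Gamma^{\prim}$ where Lemma~\ref{1.5} applies directly in both directions. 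Once $\nu\in\bZ_l^{\ast}$ is in hand, Lemma~\ref{1.5} with $\nu$ and $\nu^{-1}$ yields mutual divisibility, whence $\langle\gamma-\alpha\rangle_{\Lambda}=\langle\gamma'-\alpha'\rangle_{\Lambda}$.

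I expect the main obstacle to be the case analysis pinning down $v_l(c)=v_l(c')$: coordinating the valuation equation $c\log\alpha'=c'\log\alpha$ with the primitivity constraints on $\alpha,\alpha'$, and dispatching the $\alpha=1$ or $\alpha'=1$ subcases where direct $\log$-division is unavailable.
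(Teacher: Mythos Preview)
Your argument is correct, and the valuation bookkeeping (including the Weierstrass degree computation $v_l\binom{c}{k}=v_l(c)-v_l(k)$ for $0<k<l^{v_l(c)}$ and the separate treatment of $\alpha=1$) goes through as you outline.

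The route, however, is genuinely different from the paper's. The paper splits first on whether $\gamma$ or $\gamma'$ lies in $\Gamma^{\prim}$: in that case $\gamma'-\alpha'$ is already a prime of $\Lambda$, so it divides $\gamma-\alpha$ and Lemma~\ref{1.5} immediately produces $\nu$, with primitivity of $(\gamma,\alpha)$ forcing $\nu\in\bZ_l^{\ast}$. When both $\gamma,\gamma'\notin\Gamma^{\prim}$ (so $\alpha,\alpha'\in(1+\ltilde\bZ_l)^{\prim}$), the paper passes to a finite extension $\Lambda_{\mathcal O}$ with $\mathcal O\ni\beta$ an $l^m$-th root of $\alpha$ ($m=m_\gamma$), factors $\gamma-\alpha=\prod_{\eta\in\mu_{l^m}}(\tilde\gamma-\eta\beta)$ there, picks a linear factor dividing $\gamma'-\alpha'$, and applies Lemma~\ref{1.5} in $\Lambda_{\mathcal O}$ to get $\gamma'=\tilde\gamma^{\nu'}$, $\alpha'=(\eta\beta)^{\nu'}$; comparing the $l$-adic valuation of $\nu'$ with $m$ via primitivity of $\alpha,\alpha'$ then yields the unit $\nu$. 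By contrast, you never extend the coefficient ring or invoke Lemma~\ref{1.5} until the very end: you take a single root $T_0$ of a shared distinguished prime in $\overline{\bQ_l}$, pass through the $l$-adic logarithm to obtain the scalar relation $c\log\alpha'=c'\log\alpha$ in $\bZ_l$, and finish with a valuation case-split. Your approach is more uniform across cases and more elementary in that it stays with $\bZ_l[[T]]$ and $\overline{\bQ_l}$, at the price of the explicit Weierstrass and logarithm computations; the paper's approach is more structural, repeatedly leveraging Lemma~\ref{1.5} and primality in Iwasawa algebras.
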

\begin{proof}
When $\gamma' \in \Gamma^{\prim}$, 
$\gamma'-\alpha'$ is a prime element of $\Lambda$, hence 
$\gamma'-\alpha'$ divides $\gamma-\alpha$.
Therefore, by Lemma \ref{1.5}, 
there exists
$\nu\in\Bbb Z_l$ such that $\gamma=(\gamma')^\nu$ and $\alpha=(\alpha')^\nu$.
Since $(\gamma, \alpha)\in (\Gamma \times (1+\ltilde\Bbb Z_l))^{\prim}$, 
we have $\nu\in\Bbb Z_l^{\ast}$.
Hence $\gamma=(\gamma')^\nu \in \Gamma^{\prim}$, 
so $\gamma-\alpha$ is also a prime element of $\Lambda$.
Thus, 
$\langle \gamma-\alpha \rangle_{\Lambda}=\langle \gamma'-\alpha' \rangle_{\Lambda}$.
When $\gamma \in \Gamma^{\prim}$, the proof is the same.

When $\gamma,\gamma' \notin \Gamma^{\prim}$, 
$\alpha, \alpha' \in (1+\ltilde\Bbb Z_l)^{\prim}$.
We set $m \defeq m_\gamma$, 
and let 
$\mathcal O/\bZ_l$ be a finite extension of complete discrete valuation rings containing all $l^m$-th roots of $\alpha$ in $\overline{\bQ_l}$, 
and $E\defeq \mu_{l^m}\subset \mathcal O$, 
and 
write $\beta \in \mathcal O$ for an $l^m$-th root of $\alpha$.
Then 
$\Lambda_{\mathcal O}$ is a UFD, 
and
we have 
$\gamma-\alpha=
\tilde \gamma^{l^m}-\beta^{l^m}=
\prod_{\eta\in E}(\tilde \gamma-\eta \beta)$
in $\Lambda_{\mathcal O}$.
Note that $\tilde \gamma-\eta \beta$ is a prime element of $\Lambda_{\mathcal O}$ for $\eta\in E$.
Since 
$\gamma-\alpha$ and $\gamma'-\alpha'$ are not coprime in $\Lambda_{\mathcal O}$, 
$\tilde \gamma-\eta \beta$ is a prime divisor of $\gamma'-\alpha'$ 
for some $\eta\in E$.
By Lemma \ref{1.5}, 
there exists
$\nu' \in\Bbb Z_l$ such that $\gamma'=(\tilde \gamma)^{\nu'}$ and $\alpha'=(\eta \beta)^{\nu'}$.
Now, 
we set $\nu'=ul^k$ ($u \in \bZ_l^{\ast}, k \in \bZ_{\geq 0}$).
By $\alpha'^{u^{-1}}=(\eta \beta)^{l^k}$ and
$\alpha, \alpha'^{u^{-1}}\in (1+\ltilde\Bbb Z_l)^{\prim}$, 
we obtain $k=m$.
Indeed, 
if $k<m$, then $({\alpha'}^{u^{-1}})^{l^{m-k}}=(\eta \beta)^{l^m}=\alpha$, 
which contradicts $\alpha \in (1+\ltilde\Bbb Z_l)^{\prim}$, 
and 
if $k>m$, then ${\alpha'}^{u^{-1}}=((\eta \beta)^{l^m})^{l^{k-m}}=\alpha^{l^{k-m}}$, 
which contradicts $\alpha'^{u^{-1}} \in (1+\ltilde\Bbb Z_l)^{\prim}$.
Thus,
$\gamma'=\gamma^u$ and $\alpha'=\alpha^u$, 
so $\nu \defeq u^{-1}$ satisfies the desired property.
Further, by Lemma \ref{1.5}, 
we obtain $\langle \gamma-\alpha \rangle_{\Lambda}=\langle \gamma'-\alpha' \rangle_{\Lambda}$.
\end{proof}

\begin{rem}
When $l \neq 2$, 
the condition $(\gamma, \alpha) \in  (\Gamma \times (1+\ltilde\Bbb Z_l))^{\prim}$
implies that 
$\gamma-\alpha$ is a prime element of $\Lambda$, 
even if $\gamma \notin \Gamma^{\prim}$.
Indeed, let $\fm\subset\Lambda$ be the maximal ideal of $\Lambda$, 
then 
the image of $\gamma-\alpha \in \fm$ in $\fm/\fm^2$ is not $0$.
Thus, 
we may simplify 
the proof of Lemma \ref{1.6} in this case.
\end{rem}

\begin{definition}
Let $M$ be a $\Lambda$-module.
We define a set of characters of $\Gamma$ as follows: 
\begin{equation*}
A_M^\Gamma 
\defeq 
\left\{ \rho :\Gamma\rightarrow 1+\ltilde\bZ_l \left|
\begin{array}{l}
\text{For $(\gamma, \alpha)\in  (\Gamma \times (1+\ltilde\Bbb Z_l))^{\prim}$ 
and $x\in M\setminus \{0\}$}\\
\text{with $\gamma-\alpha \in \Ann_{\Lambda}(x)$, 
$\rho (\gamma)=\alpha$}
\end{array}
\right.\right\}.
\end{equation*}
Note that 
$A_M^\Gamma=A_{M_{\Lambda\text{-}\tor}}^\Gamma$, 
and
if $M\subset M'$ then $A_{M'}^\Gamma \subset A_M^\Gamma$.
\end{definition}

We first consider the case $\Gamma =\Gamma_0$.

\begin{prop}\label{1.9}
Assume that 
$\mu_{\ltilde}\subset K$, 
$\Gamma =\Gamma_0$ and 
$S\setminus \Sigma \neq \emptyset$.
Then 
$A_J^{\Gamma_0}=\{ w\}$.
\end{prop}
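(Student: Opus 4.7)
The plan is to prove $w\in A_J^{\Gamma_0}$ and $A_J^{\Gamma_0}\subset\{w\}$ separately, using the explicit description of $J_\p$ from Proposition \ref{1.1}. As a preliminary, note that under our hypotheses Lemma \ref{1.3} gives $(S\setminus\Sigma)^{cd}=\emptyset$, so every nonzero $J_\p$ is $\Lambda/\langle\gamma_\p-\chilp(\gamma_\p)\rangle$; moreover, since $\chilp(\gamma_\p)=w(\gamma_\p)=w(\tilde\gamma_\p)^{l^{m_\p}}$ is an $l^{m_\p}$-th power in the torsion-free group $1+\ltilde\bZ_l$, the primitive pair $(\gamma_\p',\chilp(\gamma_\p)')$ is exactly $(\tilde\gamma_\p,w(\tilde\gamma_\p))$, and in particular $\tilde\gamma_\p$ is a topological generator of $\Gamma_0$.

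For $w\in A_J^{\Gamma_0}$, I would take $(\gamma,\alpha)\in(\Gamma_0\times(1+\ltilde\bZ_l))^{\prim}$ and $x=(x_\p)\in J\setminus\{0\}$ with $(\gamma-\alpha)x=0$, and pick $\p$ with $x_\p\neq 0$. Since $J_\p$ is the cyclic $\Lambda$-module $\Lambda/\langle\gamma_\p-\chilp(\gamma_\p)\rangle$ over the UFD $\Lambda$, the element $\gamma-\alpha$ must share a common prime factor with $\gamma_\p-\chilp(\gamma_\p)$ in $\Lambda$, hence also in $\Lambda_\mathcal O$ for $\mathcal O=\bZ_l[\mu_{l^{m_\p}}]$. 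Using the factorization
$$\gamma_\p-\chilp(\gamma_\p)=\prod_{\eta\in\mu_{l^{m_\p}}}(\tilde\gamma_\p-\eta\,w(\tilde\gamma_\p))$$
into pairwise non-associated primes in $\Lambda_\mathcal O$, some $\tilde\gamma_\p-\eta\,w(\tilde\gamma_\p)$ divides $\gamma-\alpha$. Lemma \ref{1.5} then yields $\nu\in\bZ_l$ with $\gamma=\tilde\gamma_\p^{\nu}$ and $\alpha=(\eta\,w(\tilde\gamma_\p))^{\nu}$. Primitivity of $(\gamma,\alpha)$ forces $\nu\in\bZ_l^\ast$, and then $\eta^\nu=\alpha\cdot w(\tilde\gamma_\p)^{-\nu}$ is a root of unity lying in the torsion-free group $1+\ltilde\bZ_l$, so $\eta^\nu=1$ and hence $\eta=1$. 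Consequently $\alpha=w(\tilde\gamma_\p)^{\nu}=w(\gamma)$.

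For the reverse inclusion I would fix $\rho\in A_J^{\Gamma_0}$ and pick any $\p\in S\setminus\Sigma$. Under $\Lambda\simeq\bZ_l[[T]]$ with $\tilde\gamma_\p\mapsto 1+T$, the element $\tilde\gamma_\p-w(\tilde\gamma_\p)$ corresponds to a distinguished linear polynomial, hence is prime in $\Lambda$; it divides $\gamma_\p-\chilp(\gamma_\p)$, and the cofactor $Q\defeq(\gamma_\p-\chilp(\gamma_\p))/(\tilde\gamma_\p-w(\tilde\gamma_\p))\in\Lambda$ is coprime to it, since coprimeness already holds in $\Lambda_\mathcal O$ where the factors $\tilde\gamma_\p-\eta\,w(\tilde\gamma_\p)$ are distinct primes. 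Setting $y\defeq\bar Q\in J_\p\subset J$, one has $y\neq 0$ and $\Ann_\Lambda(y)=\langle\tilde\gamma_\p-w(\tilde\gamma_\p)\rangle$. Since $(\tilde\gamma_\p,w(\tilde\gamma_\p))\in(\Gamma_0\times(1+\ltilde\bZ_l))^{\prim}$, the defining condition of $A_J^{\Gamma_0}$ forces $\rho(\tilde\gamma_\p)=w(\tilde\gamma_\p)$; as $\tilde\gamma_\p$ topologically generates $\Gamma_0$ and both $\rho,w$ are continuous, $\rho=w$.

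The main technical obstacle I anticipate is the annihilator bookkeeping inside the cyclic module $J_\p$: producing an element whose annihilator is generated precisely by the prime $\tilde\gamma_\p-w(\tilde\gamma_\p)$ (so that the primitive pair $(\tilde\gamma_\p,w(\tilde\gamma_\p))$ can be fed into the definition of $A_J^{\Gamma_0}$), which requires both the factorization of $\gamma_\p-\chilp(\gamma_\p)$ over $\Lambda_\mathcal O$ and a coprimeness descent back to $\Lambda$. The companion subtlety in the forward direction—ruling out the spurious $\eta\neq 1$—is controlled by the torsion-freeness of $1+\ltilde\bZ_l$.
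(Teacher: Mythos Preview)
Your proof is correct and follows essentially the same approach as the paper's (the paper phrases the forward direction via a CRT embedding $J_\p\hookrightarrow\prod_\eta\Lambda_{E_\p}/\langle\tilde\gamma_\p-\eta w(\tilde\gamma_\p)\rangle$ rather than your common-prime-factor argument in the UFD, and in the reverse direction only records the containment $\tilde\gamma_\p-w(\tilde\gamma_\p)\in\Ann_\Lambda(x_\p)$ rather than your exact computation of the annihilator, but these are equivalent). One minor slip: the claim that primitivity of $(\gamma,\alpha)$ forces $\nu\in\bZ_l^\ast$ is not justified at the point you state it (since $\eta\,w(\tilde\gamma_\p)$ need not lie in $1+\ltilde\bZ_l$), but your subsequent torsion-freeness argument establishes $\eta^\nu=1$ independently of that claim, and $\eta^\nu=1$ alone already yields $\alpha=w(\tilde\gamma_\p)^\nu=w(\gamma)$, so the slip is harmless and the extra conclusion $\eta=1$ is not needed.
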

\begin{proof}
Let 
$(\gamma, \alpha)\in  (\Gamma_0 \times (1+\ltilde\Bbb Z_l))^{\prim}$ 
and 
$x=( x_\p)_\p \in J\setminus \{0\}$ 
with 
$\gamma-\alpha \in \Ann_{\Lambda}(x)$.
Then 
for some $\p \in S\setminus \Sigma$, 
$x_\p \neq 0$ 
and 
$\Ann_{\Lambda}(x) \subset \Ann_{\Lambda}(x_\p)$.
Note that 
$w|_{\Gamma_{\p}}=\chilp$ and 
$(\gamma_{\p}', \chilp(\gamma_{\p})') = (\tilde \gamma_{\p}, w(\tilde \gamma_{\p}))$.
By Proposition \ref{1.1}, 
$J_\p \simeq \Lambda /\langle \gamma_{\p}-w(\gamma_{\p}) \rangle$.
Now, we set 
$E_{\p}\defeq \mu_{l^{m_\p}} \subset \overline{\Bbb Q}_l$, 
$\mathcal O_{E_{\p}}\defeq \Bbb Z_l[E_{\p}]=\Bbb Z_l[\zeta]\subset \overline{\Bbb Q}_l$ 
and 
$\Lambda_{E_{\p}}\defeq\Lambda_{\mathcal O_{E_{\p}}}$, 
where $\zeta$ is a primitive $l^{m_{\p}}$-th root of unity in $\overline{\Bbb Q}_l$.
Note that 
$\Lambda_{E_{\p}}$ is a UFD, 
$\gamma_{\p}-w(\gamma_{\p})=
\tilde \gamma_{\p}^{l^{m_{\p}}}-w(\tilde\gamma_{\p})^{l^{m_{\p}}}=
\prod_{\eta\in E_{\p}}(\tilde \gamma_{\p}-\eta w(\tilde\gamma_{\p}))$ 
in $\Lambda_{E_{\p}}$, 
and 
for each $\eta\in E_{\p}$, $\tilde \gamma_{\p}-\eta w(\tilde\gamma_{\p})$ is a prime element of $\Lambda_{E_{\p}}$.
Then we have 
\begin{equation*}
J_\p \simeq 
\Lambda /\langle \gamma_{\p}-w(\gamma_{\p}) \rangle
\hookrightarrow 
\Lambda_{E_{\p}} /\langle \gamma_{\p}-w(\gamma_{\p}) \rangle_{\Lambda_{E_{\p}}}
\hookrightarrow 
\prod_{\eta\in E_{\p}} \Lambda_{E_{\p}} /
\langle \tilde\gamma_{\p}-\eta w(\tilde\gamma_{\p}) \rangle_{\Lambda_{E_{\p}}}.
\end{equation*}
Here, the first injection comes from the fact that
$J_\p$ is a free $\bZ_l$-module 
as we have seen in the proof of Lemma \ref{1.3}, 
while the second injection comes from the Chinese remainder theorem.
We write $(y_\eta)_\eta$ for the image of $x_\p$ under this injection.
Since $x_\p \neq 0$, 
$y_\eta \neq 0$ for some $\eta\in E_{\p}$.
For this $\eta\in E_{\p}$, 
we have 
\begin{equation*}
\gamma-\alpha \in \Ann_{\Lambda}(x) 
\subset \Ann_{\Lambda}(x_\p) 
\subset \Ann_{\Lambda_{E_{\p}}}(y_\eta)
=\langle \tilde\gamma_{\p}-\eta w(\tilde\gamma_{\p}) \rangle_{\Lambda_{E_{\p}}}.
\end{equation*}
So 
$\langle \gamma-\alpha \rangle_{\Lambda_{E_{\p}}} \subset
\langle \tilde \gamma_{\p}-\eta w(\tilde\gamma_{\p})\rangle_{\Lambda_{E_{\p}}}$. 
By Lemma \ref{1.5}, 
there exists
$\nu\in\Bbb Z_l$ such that 
$\gamma=\tilde\gamma_{\p}^{\nu}$ and $\alpha=(\eta w(\tilde\gamma_{\p}))^{\nu}=\eta^{\nu}w(\tilde\gamma_{\p})^{\nu}$.
Since 
$1+\ltilde\Bbb Z_l$ is torsion free, $\eta^{\nu}=1$.
Hence we have $\alpha=w(\tilde\gamma_{\p}^{\nu})=w(\gamma)$.
Therefore, we obtain 
$w \in A_J^{\Gamma_0}$.

Take $\rho \in A_J^{\Gamma_0}$.
For $\p \in S\setminus \Sigma$, 
$(\tilde\gamma_{\p}, w(\tilde\gamma_{\p}))\in  (\Gamma_0 \times (1+\ltilde\Bbb Z_l))^{\prim}$, 
$\gamma_{\p}=\tilde\gamma_{\p}^{l^{m_{\p}}}$ and 
$w(\gamma_{\p})=w(\tilde\gamma_{\p})^{l^{m_{\p}}}$.
So by Lemma \ref{1.5}, 
$\tilde\gamma_{\p}-w(\tilde\gamma_{\p})$ divides 
$\gamma_{\p}-w(\gamma_{\p})$.
Hence 
if 
we write $x_\p \in J_\p \subset J$ for
the image of the quotient 
$(\gamma_{\p}-w(\gamma_{\p}))/(\tilde\gamma_{\p}-w(\tilde\gamma_{\p}))$
under 
$\Lambda \twoheadrightarrow 
\Lambda /\langle \gamma_{\p}-w(\gamma_{\p}) \rangle
\simeq J_\p$, 
then 
$\tilde\gamma_{\p}-w(\tilde\gamma_{\p}) \in \Ann_{\Lambda}(x_\p)$.
Since $\tilde\gamma_{\p}-w(\tilde\gamma_{\p}) \notin \Lambda^{\times}$, $x_\p \not = 0$.
Therefore, we obtain 
$\rho(\tilde\gamma_{\p})=w(\tilde\gamma_{\p})$
by the definition of $A_J^{\Gamma_0}$.
Since $\tilde\gamma_{\p}$ is a generator of $\Gamma_0$, 
we have $\rho =w$.
\end{proof}

\begin{prop}\label{1.10}
Assume that 
$\mu_{\ltilde}\subset K$, 
$\Gamma =\Gamma_0$ and 
$\# (S\setminus \Sigma)= \infty$.
Let $M \subset J$ be a $\Lambda$-cofinite $\Lambda$-submodule.
Then 
$A_M^{\Gamma_0}=\{ w\}$.
In particular, 
for a $\Lambda$-cofinite $\Lambda$-submodule $M' \subset X_S$, 
$A_{M'}^{\Gamma_0}\subset A_{M'\cap J}^{\Gamma_0}=\{ w\}$.
\end{prop}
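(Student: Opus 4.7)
My plan is to deduce the proposition from Proposition \ref{1.9} by combining monotonicity of $A_{\bullet}^{\Gamma_0}$ with a rank comparison exploiting $\#(S\setminus\Sigma)=\infty$.

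The inclusion $\{w\}\subset A_M^{\Gamma_0}$ is immediate: since $M\subset J$, the monotonicity remark gives $A_J^{\Gamma_0}\subset A_M^{\Gamma_0}$, and $A_J^{\Gamma_0}=\{w\}$ by Proposition \ref{1.9}. For the reverse inclusion, fix any $\rho\in A_M^{\Gamma_0}$ and any $\p\in S\setminus\Sigma$. Once I have produced an $x\in M\setminus\{0\}$ with $(\tilde\gamma_\p-w(\tilde\gamma_\p))x=0$, the pair $(\tilde\gamma_\p,w(\tilde\gamma_\p))$ lies in $(\Gamma_0\times(1+\ltilde\bZ_l))^{\prim}$ (as $\tilde\gamma_\p\in\Gamma_0^{\prim}$), so the defining property of $A_M^{\Gamma_0}$ forces $\rho(\tilde\gamma_\p)=w(\tilde\gamma_\p)$; since $\tilde\gamma_\p$ is a topological generator of $\Gamma_0\simeq\bZ_l$, continuity then yields $\rho=w$.

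The heart of the argument is producing such an $x$, i.e., verifying $M[\tilde\gamma_\p-w(\tilde\gamma_\p)]\neq 0$. I would analyze $J[\tilde\gamma_\p-w(\tilde\gamma_\p)]=\prod_{\q\in S\setminus\Sigma}J_\q[\tilde\gamma_\p-w(\tilde\gamma_\p)]$ factor by factor: for every $\q$, both $\tilde\gamma_\p,\tilde\gamma_\q\in\Gamma_0^{\prim}$ are topological generators of $\Gamma_0$, so $\tilde\gamma_\p=\tilde\gamma_\q^{\nu}$ for some $\nu\in\bZ_l^{\times}$; Lemma \ref{1.5} applied in both directions then shows that $\tilde\gamma_\p-w(\tilde\gamma_\p)$ and $\tilde\gamma_\q-w(\tilde\gamma_\q)$ are associates in $\Lambda$. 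Since $\tilde\gamma_\q-w(\tilde\gamma_\q)$ divides $\gamma_\q-w(\gamma_\q)$ in $\Lambda$, a direct calculation using $J_\q\simeq\Lambda/\langle\gamma_\q-w(\gamma_\q)\rangle$ identifies $J_\q[\tilde\gamma_\p-w(\tilde\gamma_\p)]\simeq\Lambda/\langle\tilde\gamma_\p-w(\tilde\gamma_\p)\rangle$ as a free $\bZ_l$-module of rank one. Hence $J[\tilde\gamma_\p-w(\tilde\gamma_\p)]$ is a torsion-free $\bZ_l$-module that fails to be finitely generated over $\bZ_l$ (its reduction mod $l$ is the uncountable $\bF_l$-vector space $\prod_{\q\in S\setminus\Sigma}\bF_l$). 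On the other hand, the image of $J[\tilde\gamma_\p-w(\tilde\gamma_\p)]$ in the finitely generated $\Lambda$-module $J/M$ is annihilated by $\tilde\gamma_\p-w(\tilde\gamma_\p)$ and hence is finitely generated over $\Lambda/\langle\tilde\gamma_\p-w(\tilde\gamma_\p)\rangle\simeq\bZ_l$. Comparing, the kernel $M\cap J[\tilde\gamma_\p-w(\tilde\gamma_\p)]=M[\tilde\gamma_\p-w(\tilde\gamma_\p)]$ must be nonzero.

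Finally, for the ``in particular'' clause, if $M\subset X_S$ is $\Lambda$-cofinite then the exact sequence of Proposition \ref{1.1} gives the injection $J/(M\cap J)\hookrightarrow X_S/M$, so $M\cap J$ is $\Lambda$-cofinite in $J$; monotonicity yields $A_M^{\Gamma_0}\subset A_{M\cap J}^{\Gamma_0}$, to which the first part applies. The main technical point will be verifying that \emph{every} $\q\in S\setminus\Sigma$, not merely $\q=\p$, contributes a free rank-one $\bZ_l$-summand to $J[\tilde\gamma_\p-w(\tilde\gamma_\p)]$; this is what converts the infinitude of $S\setminus\Sigma$ into the needed failure of finite generation over $\bZ_l$.
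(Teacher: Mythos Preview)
Your proof is correct and follows essentially the same approach as the paper's. The paper fixes an abstract generator $\gamma$ of $\Gamma_0$ while you use $\tilde\gamma_\p$ for a chosen $\p$, but these play identical roles since $\langle\tilde\gamma_\p - w(\tilde\gamma_\p)\rangle_\Lambda = \langle\gamma - w(\gamma)\rangle_\Lambda$ for any generator; both arguments identify $J' = J[\gamma - w(\gamma)]$ with an infinite product of copies of $\Lambda/\langle\gamma - w(\gamma)\rangle \simeq \bZ_l$ and deduce $J' \cap M \neq 0$ from the $\Lambda$-cofiniteness of $M$ in $J$ (the paper phrases this as $J'$ not being finitely generated over $\Lambda$, you as its not being finitely generated over $\bZ_l$, which amounts to the same thing here).
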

\begin{proof}
By Proposition \ref{1.9}, 
$A_M^{\Gamma_0}\supset A_J^{\Gamma_0}=\{ w\}$.
So we show the converse.
Take any $\rho \in A_M^{\Gamma_0}$ and a generator $\gamma$ of $\Gamma_0$.
For each $\p \in S\setminus \Sigma$, by Lemma \ref{1.5}, 
$\langle \tilde \gamma_{\p}-w(\tilde\gamma_{\p})\rangle_{\Lambda}=
\langle \gamma-w(\gamma)\rangle_{\Lambda}$.
Hence 
$J_\p'= J_\p[\gamma-w(\gamma)] \simeq \Lambda /\langle \gamma-w(\gamma) \rangle$.
Now, we set 
$J'\defeq \prod_{\p \in (S\setminus \Sigma)}J_\p'\subset J$, 
which is not a finitely generated $\Lambda$-module.
Since $J/M$ is a finitely generated $\Lambda$-module, 
we have 
$J'\cap M=\Ker(J'\subset J\to J/M)\neq \{0\}$.
So, take $x \in J'\cap M \setminus \{ 0 \}$.
Then
$\Ann_{\Lambda}(x)=
\langle \gamma-w(\gamma)\rangle_{\Lambda} \ni \gamma-w(\gamma)$.
Thus, by the definition $A_M^{\Gamma_0}$, 
we obtain $\rho(\gamma)=w(\gamma)$.
Since $\gamma$ is a generator of $\Gamma_0$, 
we have $\rho =w$.

The second assertion follows from 
the first assertion 
and 
the fact that 
for a $\Lambda$-cofinite $\Lambda$-submodule $M' \subset X_S$, 
$M' \cap J \subset J$ is $\Lambda$-cofinite.
\end{proof}

Next, we consider the case of a general $\Gamma$.

\begin{definition}
We say that $K_{\infty}/K$ and $S$ satisfy condition $(\ast)$
if 
for any $T\subset S\setminus \Sigma$ with $\# ((S\setminus \Sigma)\setminus T)<\infty$, 
the Frobenius elements at primes in $T$ generate an open subgroup of $G(K_{\infty}K_{\infty,0}/K)$.
\end{definition}

\begin{rem}\label{1.12}
If $K_{\infty}=K_{\infty,0}$, 
then $G(K_{\infty}K_{\infty,0}/K)=G(K_{\infty,0}/K)\simeq \bZ_l$.
Hence condition $(\ast)$ is equivalent to $\# S=\infty$.
If $K_{\infty} \neq K_{\infty,0}$, 
then 
$G(K_{\infty}K_{\infty,0}/K) \simeq \bZ_l^2$.
So, a closed subgroup of $G(K_{\infty}K_{\infty,0}/K)$ is an open subgroup 
if and only if 
it is a free $\bZ_l$-module of rank $2$.
\end{rem}

\begin{lem}\label{1.13}
Assume that 
$\mu_{\ltilde}\subset K$,  
$\Gamma \neq \Gamma_0$ and  
the Frobenius elements at $\p$ and $\q \in S\setminus \Sigma$ generate an open subgroup of $G(K_{\infty}K_{\infty,0}/K)$.
Then, there is no character $\rho :\Gamma\rightarrow 1+\ltilde\bZ_l$ 
such that 
$\rho (\gamma_{\p})=\chilp(\gamma_{\p})$ 
and 
$\rho (\gamma_{\q})=\chi_{\q}^{(l)}(\gamma_{\q})$.
\end{lem}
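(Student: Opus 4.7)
The idea is to build an auxiliary character on $\tilde{\Gamma} := G(K_\infty K_{\infty,0}/K)$ from $\rho$ and $w$ which is forced to be trivial on an open subgroup, and then derive a contradiction with the nontriviality of $w$.

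First I would observe that since $\Gamma \neq \Gamma_0$ and $K_{\infty,0}/K$ is the unique $\bZ_l$-subextension of itself, the intersection $L := K_\infty \cap K_{\infty,0}$ is finite over $K$. Consequently $\tilde{\Gamma}$ coincides with the fibered product $\Gamma \times_{G(L/K)} \Gamma_0$ inside $\Gamma \times \Gamma_0$; by Remark \ref{1.12}, $\tilde{\Gamma} \simeq \bZ_l^2$, both projections $\pi: \tilde{\Gamma} \to \Gamma$ and $\pi_0: \tilde{\Gamma} \to \Gamma_0$ are surjective, and $\pi_0(\ker \pi) = G(K_{\infty,0}/L)$ is a nontrivial open subgroup of $\Gamma_0$.

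Supposing such a $\rho$ exists, I define $\tilde{\rho}: \tilde{\Gamma} \to 1+\tilde{l}\bZ_l$ by $\tilde{\rho}(g) := \rho(\pi(g)) \cdot w(\pi_0(g))^{-1}$. A Frobenius lift of $\p$ in $G_{K,S}$ projects to $\gamma_\p$ in $\Gamma$ and to $\gamma_{\p,0}$ in $\Gamma_0$; since both $\chi_\p^{(l)}(\gamma_\p)$ and $w(\gamma_{\p,0})$ compute the same global cyclotomic character value $\frak{N}(\p) \in 1+\tilde{l}\bZ_l$, the hypothesis on $\rho$ gives $\tilde{\rho}(F_\p) = 1$, where $F_\p \in \tilde{\Gamma}$ denotes the image of the Frobenius lift; analogously $\tilde{\rho}(F_\q) = 1$.

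By hypothesis, $F_\p$ and $F_\q$ generate an open subgroup of $\tilde{\Gamma}$, so $\tilde{\rho}$ has finite image in the torsion-free group $1+\tilde{l}\bZ_l$; hence $\tilde{\rho} \equiv 1$ on all of $\tilde{\Gamma}$. Applied to elements $g \in \ker \pi$, this yields $w(\pi_0(g)) = \rho(1) = 1$, i.e., $w$ vanishes on the open subgroup $G(K_{\infty,0}/L) \subset \Gamma_0$. But $\mu_{\tilde{l}} \subset K$ forces $K_{\infty,0} = K(\mu_{l^\infty})$ and hence $w: \Gamma_0 \hookrightarrow 1+\tilde{l}\bZ_l$ to be injective, contradicting vanishing on a nontrivial subgroup. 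The only delicate point is the identification $\chi_\p^{(l)}(\gamma_\p) = w(\gamma_{\p,0}) = \frak{N}(\p)$, which is the standard compatibility between the local and global $l$-adic cyclotomic characters at an unramified Frobenius.
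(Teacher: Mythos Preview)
Your proof is correct and takes essentially the same approach as the paper: both compare $\rho\circ\pi$ with $w\circ\pi_0$ on $G(K_\infty K_{\infty,0}/K)$, use the Frobenius-generating hypothesis together with torsion-freeness of $1+\tilde l\,\bZ_l$ to force agreement everywhere, and then contradict the nontriviality of $w$. The only cosmetic difference is that the paper embeds $G$ into the full product $\Gamma\times\Gamma_0$ and uses that $\Ker(pr_1)+\Ker(pr_2)=\Gamma\times\Gamma_0$, whereas you stay inside $G$ and restrict to $\ker\pi$; the compatibility $\chi_\p^{(l)}(\gamma_\p)=w(\gamma_{\p,0})=\frak N(\p)$ that you spell out is exactly what the paper's phrase ``by assumption'' is invoking.
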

\begin{proof}
We set 
$G \defeq G(K_{\infty}K_{\infty,0}/K)$ 
and let 
$U$ be the open subgroup of $G$ generated by the Frobenius elements at $\p$ and $\q$.
By assumption, 
the pullbacks of 
$f_1 : \Gamma \times \Gamma_0 \overset{pr_1}\twoheadrightarrow \Gamma \overset{\rho}\to 1+\ltilde\bZ_l$
and 
$f_2 : \Gamma \times \Gamma_0 \overset{pr_2}\twoheadrightarrow \Gamma_0 \overset{w}\to 1+\ltilde\bZ_l$
by
$U\hookrightarrow G\hookrightarrow \Gamma \times \Gamma_0$
coincide. 
Since the cokernel of
$U\hookrightarrow G\hookrightarrow \Gamma \times \Gamma_0$ is finite 
and 
$1+\ltilde\bZ_l$ is a torsion free $\bZ_l$-module, 
$f_1$ and $f_2$ 
coincide.
Since $\Ker(pr_1)$ and $\Ker(pr_2)$ generate $\Gamma \times \Gamma_0$, 
$f_1$ and $f_2$ 
coincide with 
the trivial map 
$\Gamma \times \Gamma_0 \to 1 \to 1+\ltilde\bZ_l$.
But this contradicts the fact that 
$f_2$
is nontrivial.
\end{proof}

\begin{prop}\label{1.14}
Assume that 
$\mu_{\ltilde}\subset K$,  
$\Gamma \neq \Gamma_0$ and 
$K_{\infty}/K$ and $S$ satisfy conditions $(\dagger)$ and $(\ast)$.
Let $M \subset X_S$ be a $\Lambda$-cofinite $\Lambda$-submodule.
Then 
$A_M^{\Gamma}=\emptyset$.
\end{prop}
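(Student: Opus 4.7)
The plan is to assume for contradiction that $\rho\in A_M^{\Gamma}$ and to exhibit two primes $\p_1,\p_2\in T'\defeq (S\setminus\Sigma)^{fd}$ with $\rho(\gamma_{\p_i})=\chi_{\p_i}^{(l)}(\gamma_{\p_i})$ that are inequivalent under the equivalence relation $\sim$ on $T'$ defined by $\p\sim\q$ iff $(\gamma_\p',\chi_\p^{(l)}(\gamma_\p)')=(\gamma_\q',\chi_\q^{(l)}(\gamma_\q)')^u$ for some $u\in\bZ_l^{\ast}$. By Lemma \ref{1.6} this equivalence coincides with $\langle \gamma_\p'-\chi_\p^{(l)}(\gamma_\p)'\rangle_\Lambda=\langle \gamma_\q'-\chi_\q^{(l)}(\gamma_\q)'\rangle_\Lambda$, and geometrically with the Frobenii of $\p,\q$ in $G(K_\infty K_{\infty,0}/K)\simeq\bZ_l^2$ lying on the same $\bZ_l$-line; consequently two $\sim$-inequivalent primes have Frobenii generating an open subgroup of $G(K_\infty K_{\infty,0}/K)$, so Lemma \ref{1.13} will yield the desired contradiction.

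The main structural input I will exploit is that, as $M\subset X_S$ is $\Lambda$-cofinite, the inclusion $J\hookrightarrow X_S$ gives $J/(J\cap M)\hookrightarrow X_S/M$, making $J/(J\cap M)$ a finitely generated $\Lambda$-module. Using this, the key claim is: \emph{if $\p_0\in T'$ and $[\p_0]_\sim\cap T'$ is infinite, then $\rho(\gamma_{\p_0})=\chi_{\p_0}^{(l)}(\gamma_{\p_0})$.} To prove it I set $N_{[\p_0]}\defeq\bigoplus_{\p\in [\p_0]_\sim\cap T'}J_\p'\subset J$; each $J_\p'$ is nonzero by Proposition \ref{1.1} together with $(\dagger)$, so $N_{[\p_0]}$ is an infinite direct sum of nonzero $\Lambda$-modules and hence not finitely generated, whereas $N_{[\p_0]}/(N_{[\p_0]}\cap M)\hookrightarrow J/(J\cap M)$ is, forcing $N_{[\p_0]}\cap M\neq 0$. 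Any nonzero $y=\sum_{\p\in F}y_\p$ in this intersection has finite $F\subset [\p_0]_\sim\cap T'$ with $y_\p\in J_\p'\setminus\{0\}$, and the common ideal $\langle \gamma_{\p_0}'-\chi_{\p_0}^{(l)}(\gamma_{\p_0})'\rangle_\Lambda$ (equal to $\langle \gamma_\p'-\chi_\p^{(l)}(\gamma_\p)'\rangle_\Lambda$ for each $\p\in F$) annihilates every $y_\p$, hence $y$; the defining property of $A_M^{\Gamma}$ then delivers $\rho(\gamma_{\p_0}')=\chi_{\p_0}^{(l)}(\gamma_{\p_0})'$, from which the claim follows by raising to the appropriate $l^m$-th power.

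It remains to arrange a case split. If $T'$ contains at least two $\sim$-classes of infinite intersection with $T'$, the key claim immediately produces two $\sim$-inequivalent primes and we are done. Otherwise I would first argue that $T'$ contains infinitely many $\sim$-classes: by $(\dagger)$, $T'$ is cofinite in $S\setminus\Sigma$, and if only finitely many classes existed with at most one of infinite size, then removing the finite union of the remaining finite class-intersections would yield a finite modification of $S\setminus\Sigma$ all of whose Frobenii lie on a single $\bZ_l$-line, contradicting $(\ast)$. Fixing an (infinite) transversal $T^\circ\subset T'$ of $\sim$, the same non-finite-generation argument applied to $N^\circ\defeq\bigoplus_{\p\in T^\circ}J_\p'$ produces an infinite set $T^{\circ\circ}\subset T^\circ$ of primes $\p_0$ for each of which there exists $y\in N^\circ\cap M$ with $y_{\p_0}\neq 0$. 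I would then multiply $y$ by $b\defeq\prod_{\p\in\mathrm{supp}(y)\setminus\{\p_0\}}(\gamma_\p'-\chi_\p^{(l)}(\gamma_\p)')$ to isolate the $\p_0$-component: pairwise $\sim$-inequivalence of $\mathrm{supp}(y)\subset T^\circ$ together with the primality of $\gamma_{\p_0}'-\chi_{\p_0}^{(l)}(\gamma_{\p_0})'$ (Remark after Lemma \ref{1.6}) keeps $by_{\p_0}\neq 0$, and then $(\gamma_{\p_0}'-\chi_{\p_0}^{(l)}(\gamma_{\p_0})')\in\Ann_\Lambda(by)$ again forces $\rho(\gamma_{\p_0})=\chi_{\p_0}^{(l)}(\gamma_{\p_0})$ for every $\p_0\in T^{\circ\circ}$. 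Any two distinct primes in $T^{\circ\circ}$ are $\sim$-inequivalent, and Lemma \ref{1.13} closes this case as well.

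The most delicate point will be the edge case $l=2$ with $\gamma_{\p_0}'\notin\Gamma^{\prim}$, where $\gamma_{\p_0}'-\chi_{\p_0}^{(l)}(\gamma_{\p_0})'$ may fail to be a prime element of $\Lambda$ and the isolation trick breaks down; I would handle this as in Lemma \ref{1.6}, by extending scalars to an appropriate $\Lambda_{\mathcal O}$ in which the element factors into primes, and carrying out the annihilator bookkeeping there.
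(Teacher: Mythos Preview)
Your argument is correct and reaches the same endpoint (two primes with $\rho(\gamma_{\p_i})=\chi_{\p_i}^{(l)}(\gamma_{\p_i})$ whose Frobenii generate an open subgroup, then Lemma~\ref{1.13}), but the route differs from the paper's. The paper replaces $M$ by $M\cap J$ and applies the Iwasawa structure theorem to the finitely generated module $J/M$, so that only finitely many of the pairwise coprime elements $a_i=\gamma_{\p_i}'-\chi_{\p_i}^{(l)}(\gamma_{\p_i})'$ can have $\#(J/M)[a_i]=\infty$; this yields, in one stroke, that $J_{T_i}'\cap M\neq 0$ for all $i$ outside a finite set $I'$, hence $\rho(\gamma_\p)=\chi_\p^{(l)}(\gamma_\p)$ for every $\p$ in the cofinite set $\coprod_{i\notin I'}T_i$, and condition $(\ast)$ finishes. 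Your proof trades this structural input for a case split: when at least two $\sim$-classes are infinite your ``key claim'' suffices directly, and otherwise you build a transversal and isolate a single component via multiplication by $b$. The paper's argument is more uniform and slightly shorter; yours is more elementary in that it never invokes the structure theorem.

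Two remarks. First, your geometric identification of $\sim$ with ``same $\bZ_l$-line in $G(K_\infty K_{\infty,0}/K)$'' is correct under the hypothesis $\mu_{\tilde l}\subset K$: the injective map $(\id,w):\Gamma\times\Gamma_0\hookrightarrow\Gamma\times(1+\tilde l\bZ_l)$ takes the Frobenius $(\gamma_\p,\gamma_{\p,0})$ to $(\gamma_\p,\chi_\p^{(l)}(\gamma_\p))$, so $\bZ_l$-proportionality is preserved; you should make this explicit. Second, your worry about $l=2$ is unnecessary. What your isolation trick needs is not that $a_{\p_0}$ be prime but that each $a_\p$ (for $\p\not\sim\p_0$) be \emph{coprime} to $a_{\p_0}$ in $\Lambda$, and Lemma~\ref{1.6} gives exactly this (its contrapositive says $\langle a_\p\rangle\neq\langle a_{\p_0}\rangle\Rightarrow a_\p,a_{\p_0}$ coprime). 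Since $J_{\p_0}'\simeq\Lambda/\langle a_{\p_0}\rangle$ and $b$ is coprime to $a_{\p_0}$, one has $a_{\p_0}\mid b\cdot s\Leftrightarrow a_{\p_0}\mid s$ in the UFD $\Lambda$, so $by_{\p_0}\neq 0$; no base change to $\Lambda_{\mathcal O}$ is required.
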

\begin{proof}
Replacing $M$ by $M\cap J$, we may assume that $M\subset J$ and $M$ is $\Lambda$-cofinite in $J$.
As $J/M$ is a finitely generated $\Lambda$-module, 
by \cite{NSW}, (5.3.8) Structure Theorem for Iwasawa Modules, 
there exist 
$r, s \in \bZ_{\geq 0}, n_j \in \bZ_{> 0}(1\leq j \leq s)$, 
prime elements $f_j(1\leq j \leq s)$  of $\Lambda$
and 
a $\Lambda$-homomorphism 
$J/M \to \Lambda^{\oplus r} \oplus 
\bigoplus_{j=1}^{s}\Lambda/f_j^{n_j}$
such that 
the kernel and the cokernel of this homomorphism are finite.

For $\p, \q \in (S\setminus \Sigma)^{fd}$, 
we say that $\p$ and $\q$ are equivalent 
if and only if 
\begin{equation*}\langle \gamma_{\p}'-\chilp(\gamma_{\p})' \rangle_{\Lambda}
=
\langle \gamma_{\q}'-\chilp(\gamma_{\q})' \rangle_{\Lambda}.\end{equation*}
This is an equivalence relation on $(S\setminus \Sigma)^{fd}$.
We write 
$(S\setminus \Sigma)^{fd}=\coprod_{i \in I}T_i$ 
for the union of all equivalence classes.
For each $i \in I$, take a $\p_i \in T_i$ 
and set 
$a_i\defeq \gamma_{\p_i}'-\chilp(\gamma_{\p_i})'$
and 
$J_{T_i}'\defeq \prod_{\p \in T_i}J_\p'\subset J$.
Then we have $J_{T_i}'\simeq \prod_{\p \in T_i}\Lambda /a_i$.
Now, the set 
\begin{equation*}I' \defeq \{ i\in I \mid \text{$\# (J/M)[a_i] =\infty$ and $\# T_i<\infty$}\}\end{equation*} is finite.
Indeed, 
since 
$a_i$ and $a_j$ are coprime in $\Lambda$ for $i \neq j$ by Lemma \ref{1.6}, 
the number of $i \in I$ with $\# (J/M)[a_i] =\infty$ is at most $s$
by the structure of $J/M$ mentioned above.
So, 
$\coprod_{i \in I'}T_i$ is a finite set.
Then, 
for $i\in I\setminus I'$, 
we have $J_{T_i}'\cap M=\Ker(J_{T_i}'\subset J[a_i]\to (J/M)[a_i])\neq \{0\}$.
Indeed, 
if $\# (J/M)[a_i] \neq\infty$, 
this follows from 
the fact $\# J_{T_i}' =\infty$.
If $\# (J/M)[a_i] =\infty$, by the definition of $I'$, 
$\# T_i=\infty$.
Hence, $J_{T_i}'$ is not a finitely generated $\Lambda$-module.
Thus, as $(J/M)[a_i]$ is a finitely generated $\Lambda$-module, 
we have 
\begin{equation*}J_{T_i}'\cap M=\Ker(J_{T_i}'\subset J[a_i]\to (J/M)[a_i])\neq \{0\}.\end{equation*}
Therefore, for $i\in I\setminus I'$, 
we can take $x_i \in J_{T_i}'\cap M \setminus \{ 0 \}$.
Then, for $\p \in T_i$, 
$\gamma_{\p}'-\chilp(\gamma_{\p})'\in \Ann_{\Lambda}(x_i)$.
If 
$\rho \in A_M^{\Gamma}$, 
we have $\rho(\gamma_{\p}')=\chilp(\gamma_{\p})'$, 
in particular, $\rho(\gamma_{\p})=\chilp(\gamma_{\p})$.
Now, by condition $(\dagger)$, $\coprod_{i \in I'}T_i$ and $(S\setminus \Sigma)^{cd}$ are finite sets.
Therefore, 
by condition $(\ast)$, 
the Frobenius elements at primes in $\coprod_{i \in I\setminus I'}T_i$ generate an open subgroup of $G(K_{\infty}K_{\infty,0}/K)$.
Thus, 
by Remark \ref{1.12} and Lemma \ref{1.13}, 
we obtain 
$A_M^{\Gamma}=\emptyset$.
\end{proof}

\begin{definition}\label{1.15}
We say that $S$ satisfies condition $(\star_l)$
if 
for any $\bZ_l$-extension $K_{\infty}/K$, 
$K_{\infty}/K$ and $S$ satisfy condition $(\ast)$.
\end{definition}

\begin{lem}\label{1.16}
Assume that $K_{\infty}/K$ and $S$ satisfy condition $(\ast)$.
Then for any finite subextension $K_S/L/K$, 
$K_{\infty}L/L$ and $S(L)$ satisfy condition $(\ast)$.
\end{lem}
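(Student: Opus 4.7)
The plan is to transfer condition $(\ast)$ from $K$ to $L$ by exploiting the abelianity of $G(K_\infty K_{\infty,0}/K)$. Set $\tilde K := K_\infty K_{\infty,0}$ and $\tilde L := (K_\infty L)\cdot L_{\infty,0}$, so that the condition to verify concerns Frobenius elements in $G(\tilde L/L)$. First I would show $L_{\infty,0}=L\cdot K_{\infty,0}$: both are $\bZ_l$-extensions of $L$ contained in $L(\mu_{l^\infty})$, and the ambient field admits a unique such $\bZ_l$-subextension (its Galois group is the unique $\bZ_l$-quotient of the open subgroup $G(L(\mu_{l^\infty})/L) \subset \bZ_l^\ast$). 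Consequently $\tilde L = \tilde K\cdot L$, and restriction identifies $G(\tilde L/L)$ with the open subgroup $G(\tilde K/K')$ of $G(\tilde K/K)$, where $K' := \tilde K \cap L$; write $n := [K':K]$.

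Given $T' \subset S(L)\setminus\Sigma_L$ with finite complement, I would set $T := \{\fp \in S\setminus\Sigma : \text{every prime of }L\text{ above }\fp\text{ lies in }T'\}$. Since each prime of $L$ lies above a unique prime of $K$, the complement $(S\setminus\Sigma)\setminus T$ is finite. By condition $(\ast)$ for $K_\infty/K$ and $S$, the closed subgroup $U \subset G(\tilde K/K)$ topologically generated by $\{\Frob_{\fp,\tilde K/K}\}_{\fp \in T}$ is then open.

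The crucial step is the following Frobenius compatibility: for $\fp \in T$ and any $\fq \in T'$ above $\fp$, the Frobenius $\Frob_{\fq,\tilde L/L}$ corresponds to $\Frob_{\fp,\tilde K/K}^{n_\fp}$ under $G(\tilde L/L) \simeq G(\tilde K/K') \hookrightarrow G(\tilde K/K)$, where $n_\fp$ is the order of $\Frob_{\fp,K'/K}$ in $G(K'/K)$. This follows from the abelianity of $G(\tilde K/K)$ (which makes the Frobenius elements well-defined and independent of the choice of $\fq$) combined with the standard formula $\Frob_{\fq',M/K'}=\Frob_{\fp,M/K}^{f(\fq'|\fp)}$ applied to the prime $\fq' := \fq|_{K'}$. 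Since $n_\fp \mid n$, the subgroup $H \subset G(\tilde L/L)$ topologically generated by $\{\Frob_{\fq,\tilde L/L}\}_{\fq \in T'}$ contains $U^n$, the image of $U$ under the $n$-th power map on $G(\tilde K/K)$. By Remark \ref{1.12}, $G(\tilde K/K)$ is a free $\bZ_l$-module of rank at most $2$, so this map is injective with open image $l^{v_l(n)}G(\tilde K/K)$; therefore $U^n$ is open in $G(\tilde K/K)$, and a fortiori in the open subgroup $G(\tilde L/L)$. This yields condition $(\ast)$ for $K_\infty L/L$ and $S(L)$. The only genuine subtlety is the Frobenius compatibility, which reduces to the standard fact about Frobenius in shifted unramified towers; everything else is formal.
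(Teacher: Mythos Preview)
Your overall strategy is sound, but the Frobenius compatibility step contains an error. Under the restriction isomorphism $G(\tilde L/L)\isom G(\tilde K/K')$, the element $\Frob_{\fq,\tilde L/L}$ corresponds not to $\Frob_{\fq',\tilde K/K'}$ but to $\Frob_{\fq',\tilde K/K'}^{f_{\fq,L/K'}}$: restriction of a Frobenius picks up the residual degree of the intermediate step $L/K'$. Combined with your formula for the tower $K\subset K'\subset\tilde K$, the correct exponent is $f_{\fq,L/K}=f_{\fq,L/K'}\cdot f_{\fq',K'/K}$, not $n_\fp=f_{\fq',K'/K}$; in particular the exponent does depend on the choice of $\fq$ above $\fp$ when $L/K$ is not Galois. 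Concretely, if $L$ is linearly disjoint from $\tilde K$ over $K$ then $K'=K$, $n=1$, $n_\fp=1$, yet a prime $\fp\in T$ may well have some $\fq$ above it with $f_{\fq,L/K}>1$; your conclusion $H\supset U^n=U$ would then force $\Frob_{\fp}\in H$, which need not hold.

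The repair is immediate: replace $n$ by any integer $N$ divisible by every possible residual degree $f_{\fq,L/K}$, for instance $N=\operatorname{lcm}(1,\ldots,[L:K])$; then $H\supset U^N$, and $U^N$ is still open in $G(\tilde K/K)$ by the same $\bZ_l$-rank argument. The paper's proof sidesteps this bookkeeping by exploiting the rank structure directly: in the case $K_\infty\neq K_{\infty,0}$ one selects two primes $\fp,\fq\in T$ whose Frobenii are $\bZ_l$-linearly independent in $G(\tilde K/K)\simeq\bZ_l^2$, lifts them to $\fp',\fq'\in T'$, and observes that the Frobenii of $\fp',\fq'$ are nonzero $\bZ_l$-multiples of those of $\fp,\fq$, hence still span a rank-$2$ (open) subgroup.
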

\begin{proof}
If $\Gamma = \Gamma_0$, 
as we have seen in Remark \ref{1.12}, 
this lemma only states that 
if $\# S = \infty$, then $\# S(L) = \infty$, 
and this is obvious.
If $\Gamma \neq \Gamma_0$, then 
$G(K_{\infty}K_{\infty,0}/K)\simeq \bZ_l^2$ 
and 
a closed subgroup of $G(K_{\infty}K_{\infty,0}/K)$ is open 
if and only if 
it is a free $\bZ_l$-module of rank $2$, 
as we have seen in Remark \ref{1.12}.
If the Frobenius elements at $\p, \q \in S\setminus \Sigma$ generate an open subgroup of $G(K_{\infty}K_{\infty,0}/K)$, 
take $\p', \q' \in S(L)\setminus \Sigma_L$ above $\p, \q$, respectively, 
then the Frobenius elements at $\p', \q'$ generate an open subgroup of $G(K_{\infty}K_{\infty,0}L/L)$.
The assertion follows from this.
\end{proof}

With the notations and assumptions as in Lemma \ref{1.16}, 
note that 
\begin{equation*}G(K_{\infty}L/L)=G_{L,S(L)}/\Ker(G_{L,S(L)} \hookrightarrow G_{K,S} \twoheadrightarrow \Gamma).\end{equation*}

\begin{theorem}\label{1.17}
Assume that $S$ satisfies condition $(\star_l)$.
Then 
the surjection $G_{K,S}\twoheadrightarrow \Gamma_0$ and $w:\Gamma_0\rightarrow 1+\ltilde\bZ_l$ are characterized group-theoretically from $G_{K,S}$ (and $l$).
\end{theorem}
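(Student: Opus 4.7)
The plan is two-fold: first identify the cyclotomic surjection $G_{K,S}\twoheadrightarrow \Gamma_0$ intrinsically among all continuous surjections $G_{K,S}\twoheadrightarrow \bZ_l$, then read off $w$ via Proposition \ref{1.10} after passing to a suitable open subgroup. The input data is manifestly group-theoretic: the set of continuous surjections $\phi:G_{K,S}\twoheadrightarrow \bZ_l$, the associated $\Lambda$-modules $X_\phi \defeq (\Ker(\phi)^{(l)})^{\ab}$, their analogues obtained by restricting $\phi$ to any open subgroup $U \leq G_{K,S}$, the validity of condition $(\dagger')$ for such pairs, and the sets $A^{\phi|_U}$ are all reconstructible from $G_{K,S}$ (and $l$). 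So the task reduces to singling out the cyclotomic $\phi$ by an intrinsic criterion.

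My candidate criterion is: $\phi$ is cyclotomic if and only if (i) for every open $U \leq G_{K,S}$ the pair $(U,\phi|_U)$ satisfies $(\dagger')$, and (ii) there exists some open $U$ with $A^{\phi|_U}\neq \emptyset$. For the forward implication I would take $U=G_{L,S(L)}$ with $L\supset K(\mu_{\tilde l})$ a finite subextension of $K_S/K$: then $\phi|_U$ corresponds to the cyclotomic $\bZ_l$-extension $L_{\infty,0}/L$, (i) follows from Lemma \ref{1.4.5}, hypothesis $(\star_l)$ combined with Lemma \ref{1.16} forces condition $(\ast)$ at the level of $L$, and Proposition \ref{1.10} gives $A^{\phi|_U}=\{w_L\}$ --- noting that $\#(S(L)\setminus \Sigma_L)=\infty$, since $(\star_l)$ applied to $K_{\infty,0}/K$ via Lemma \ref{1.4.5} and Remark \ref{1.12} already forces $\#S=\infty$. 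For the converse, assume $\phi$ is not cyclotomic and pass to the same type of $L$: $LK_\phi/L$ differs from $L_{\infty,0}/L$ because the two distinct $\bZ_l$-extensions $K_\phi,K_{\infty,0}$ of $K$ generate a $\bZ_l^2$-extension of $K$, and this non-procyclicity survives any finite base change. Hence Proposition \ref{1.14} applied at $L$ forces $A^{\phi|_U}=\emptyset$, contradicting (ii).

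Having recovered $G_{K,S}\twoheadrightarrow \Gamma_0$, extracting $w$ is immediate: pick any open $U$ as above with $A^{\phi|_U}\neq \emptyset$, let $w_L$ be its unique element, and observe that under $U\twoheadrightarrow \phi(U)=\Gamma_{L,0}\hookrightarrow \Gamma_0$ we have $w_L=w|_{\Gamma_{L,0}}$ by construction. Since $\Gamma_{L,0}$ has finite ($l$-power) index in $\Gamma_0\simeq \bZ_l$ and $1+\tilde l\bZ_l$ is torsion-free, $w_L$ determines $w$ uniquely by $l^n$-th root extraction in $1+\tilde l\bZ_l$.

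The main obstacle, I expect, is the converse direction of the criterion. The two delicate subsidiary points are: that $LK_\phi/L$ remains non-cyclotomic after the finite base change to $L$ (a short Galois-theoretic argument about $K_\phi\cdot K_{\infty,0}/K$ having non-procyclic Galois group, a property preserved under passage to an open subgroup); and that condition $(\ast)$ actually propagates to the level of $L$ so that Proposition \ref{1.14} is applicable --- this is exactly what the definition of $(\star_l)$ together with Lemma \ref{1.16} is designed to guarantee.
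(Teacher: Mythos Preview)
Your overall strategy matches the paper's, but the criterion you state has a gap in the forward direction. You invoke Proposition~\ref{1.10} to conclude $A^{\phi|_U} = \{w_L\}$ for $L \supset K(\mu_{\tilde l})$, but that proposition, applied with $M = X_{S(L)}$, only gives the containment $A_{X_{S(L)}}^{\Gamma_{L,0}} \subset \{w_L\}$, not equality. The point is that $X_{S(L)}$ is an extension of $X_{\Sigma_L}$ by $J$, and while $J$ has only ``good'' torsion (annihilators involving $\gamma - w_L(\gamma)$), nothing in the available results controls the $\Lambda$-torsion of $X_{S(L)}$ coming from the $X_{\Sigma_L}$ side. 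If $X_{S(L)}$ happens to contain a nonzero element annihilated by some $\gamma - \alpha$ with $(\gamma,\alpha)$ primitive and $\alpha \neq w_L(\gamma)$, then $w_L \notin A_{X_{S(L)}}^{\Gamma_{L,0}}$ and your condition~(ii) fails already for the cyclotomic quotient. This is exactly why the paper's criterion asks not whether $A_{X_{S(L')}}$ itself is nonempty, but whether there \emph{exists} a $\Lambda$-cofinite $M_0 \subset X_{S(L')}$ with $A_M \neq \emptyset$ for all cofinite $M \subset M_0$: one then takes $M_0 = J$ (cofinite because $X_{\Sigma_{L'}}$ is finitely generated), and the first assertion of Proposition~\ref{1.10} gives equality. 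The existential over $M_0$ is what makes this group-theoretic without having to identify $J$.

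There is a second, independent quantifier issue in your converse. Your~(ii) asserts that \emph{some} open $U$ has $A^{\phi|_U} \neq \emptyset$, so refuting it for non-cyclotomic $\phi$ requires $A^{\phi|_U} = \emptyset$ for \emph{every} $U$; but Proposition~\ref{1.14} applies only when $\mu_{\tilde l} \subset L$, so it does not directly handle arbitrary $U$. The paper inserts a further ``for all finite $L' \supset L$'' layer into the criterion, so that for any given $L$ one may pass to $L' = L(\mu_{\tilde l})$ before invoking Proposition~\ref{1.14}. With both layers added (the cofinite-$M_0$ layer and the $L'$-layer), your argument becomes essentially the paper's proof, and your extraction of $w$ in the final paragraph then goes through as written.
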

\begin{proof}
Let $K_\infty/K$ be a $\bZ_l$-extension 
satisfying the condition that 
$K_{\infty}L/L$ and $S(L)$ satisfy condition $(\dagger')$ 
for any finite subextension $K_S/L/K$.
By Lemma \ref{1.4.5}, 
$K_{\infty,0}/K$ satisfies this condition.
By the definition of condition $(\dagger')$, 
we can 
detect purely group-theoretically whether or not 
the $\bZ_l$-extension $K_\infty/K$ 
corresponding to 
a $\bZ_l$-quotient of $G_{K,S}$ 
satisfies the above condition. 
By condition $(\star_l)$, 
$K_\infty/K$ and $S$ satisfy condition $(\ast)$.

If $K_\infty = K_{\infty,0}$, 
there exists a finite subextension $K_S/L/K$ 
satisfying the following condition: 
``For any finite subextension $K_S/L'/L$, 
set $\Gamma' \defeq G(K_\infty L'/L')$, then 
there exists a $\Lambda$-cofinite $\Lambda^{\Gamma'}$-submodule $M_0 \subset X_{S(L')}^{\Gamma'}$ 
such that 
$A_M^{\Gamma'}\neq \emptyset$ 
for any $\Lambda^{\Gamma'}$-cofinite $\Lambda^{\Gamma'}$-submodule $M \subset M_0$''.
Indeed, 
we set $L=K(\mu_{\ltilde})$ and $M_0=J^{\Gamma'}$, 
then, 
by Proposition \ref{1.10}, 
$A_M^{\Gamma'}=\{ w_{L'}\} \neq \emptyset$ for any $M$ as above.

On the other hand, 
if $K_\infty \neq K_{\infty,0}$, 
there does not exist a finite subextension $K_S/L/K$ as above.
Indeed, 
for any finite subextension $K_S/L/K$, 
set $L' \defeq L(\mu_{\ltilde})$, 
then 
$K_\infty L'/L'$ and $S(L')$ satisfy 
condition $(\dagger)$ 
by Lemma \ref{1.4.5} 
and 
condition $(\ast)$ by Lemma \ref{1.16}.
Therefore, by Proposition \ref{1.14}, 
$A_M^{\Gamma'}=\emptyset$ for any $\Lambda^{\Gamma'}$-cofinite $\Lambda^{\Gamma'}$-submodule $M \subset X_{S(L')}^{\Gamma'}$.

Thus, 
we can 
detect group-theoretically whether or not 
the $\bZ_l$-extension $K_\infty/K$ 
corresponding to a $\bZ_l$-quotient of $G_{K,S}$
is the cyclotomic $\bZ_l$-extension. 


Further, 
if $K_\infty = K_{\infty,0}$, 
there exists a finite subextension $K_S/L/K$ 
satisfying the following condition: 
``For any finite subextension $K_S/L'/L$, 
there exists a $\Lambda^{\Gamma_{L',0}}$-cofinite $\Lambda^{\Gamma_{L',0}}$-submodule $M_0 \subset X_{S(L')}^{\Gamma_{L',0}}$ 
such that 
for any $\Lambda^{\Gamma_{L',0}}$-cofinite $\Lambda^{\Gamma_{L',0}}$-submodule $M \subset M_0$, 
$A_M^{\Gamma_{L',0}}$ consists of one element 
for which we write $w_{L'}'$, 
and the following diagram commutes: 
\begin{equation*}
\xymatrix{
\Gamma_{L',0}\ar@{^{(}-{>}}[rr] \ar[rd]_-{w_{L'}'}\ar@{}[d]&&\Gamma_{L,0}\ar[dl]^-{w_{L}'}\\
&1+\ltilde\bZ_l
}
\end{equation*}
where $\Gamma_{L',0} \hookrightarrow \Gamma_{L,0}$ is the natural homomorphism.''
Indeed, 
we set $L=K(\mu_{\ltilde})$ and $M_0=J^\Gamma$, 
then 
$L$ and $M_0$ satisfy the condition as above by Proposition \ref{1.10}.
For such a 
finite subextension $K_S/L/K$, 
$w_{L}'$ makes the following diagram
\begin{equation*}
\xymatrix{
\Gamma_{L(\mu_{\ltilde}),0}\ar@{^{(}-{>}}[rr] \ar[rd]_-{w_{L(\mu_{\ltilde})}'}\ar@{}[d]&&\Gamma_{L,0}\ar[dl]^-{w_{L}'}\\
&1+\ltilde\bZ_l
}
\end{equation*}
commute 
and by Proposition \ref{1.10}, $w_{L(\mu_{\ltilde})}'=w_{L(\mu_{\ltilde})}$.
Since $1+\ltilde\bZ_l$ is torsion free 
and 
the cokernel of $\Gamma_{L(\mu_{\ltilde}),0} \hookrightarrow \Gamma_{L,0}$ is finite, 
there exists a unique character of $\Gamma_{L,0}$ 
such that 
this diagram commutes, 
hence 
we obtain $w_{L}'=w_{L}$.
Similarly, $w_K$ can be characterized as the unique character of $\Gamma_{K,0}$  such that 
the following diagram commutes: 
\begin{equation*}
\xymatrix{
\Gamma_{L,0}\ar@{^{(}-{>}}[rr] \ar[rd]_-{w_L}\ar@{}[d]&&\Gamma_{K,0}\ar[dl]^-{}\\
&1+\ltilde\bZ_l.
}
\end{equation*}
\end{proof}

\begin{rem}\label{1.18}
In condition $(\star_l)$, 
we assume that 
for any $\bZ_l$-extension $K_{\infty}/K$, 
$K_{\infty}/K$ and $S$ satisfy condition $(\ast)$.
However, 
about condition $(\star_l)$ in Theorem \ref{1.17}, 
for 
non-cyclotomic 
$\bZ_l$-extensions $K_{\infty}/K$ 
which we can distinguish 
group-theoretically 
from $K_{\infty,0}/K$, 
it is not necessary to assume that 
$K_{\infty}/K$ and $S$ satisfy condition $(\ast)$.
Indeed, it is enough to modify 
the first sentence in the proof.

For example,
we say that $K_{\infty}/K$ satisfies condition $(\ddagger)$ 
if 
there exists a $\Lambda$-cofinite $\Lambda$-submodule $M_0 \subset X_{S}$ 
such that 
for any 
$(\gamma, \alpha)\in  (\Gamma \times (1+\ltilde\Bbb Z_l))^{\prim}$ 
and 
any $x\in M_0\setminus \{0\}$
with 
$\gamma-\alpha \in \Ann_{\Lambda}(x)$, 
we have 
$\gamma \in \Gamma^{\prim}$ 
and 
the index of $\langle \alpha \rangle$ in $1+\ltilde\Bbb Z_l$ is contant 
(i.e. does not depend on $(\gamma, \alpha)$ or $x$).
If $K_\infty = K_{\infty,0}$, 
set $M_0 \defeq J$, then 
$K_{\infty}/K$ satisfies condition $(\ddagger)$.
Indeed, 
as we have seen in the proof of Proposition \ref{1.9}, 
for any $(\gamma, \alpha)\in  (\Gamma \times (1+\ltilde\Bbb Z_l))^{\prim}$ 
and 
any $x\in J\setminus \{0\}$ 
with 
$\gamma-\alpha \in \Ann_{\Lambda}(x)$, 
there exist 
$\p \in (S\setminus \Sigma)$ and $\nu\in\Bbb Z_l$
such that 
$\gamma=\tilde\gamma_{\p}^{\nu}$ 
and 
$\alpha=w(\tilde\gamma_{\p})^{\nu}$.
Since 
$(\gamma, \alpha)\in  (\Gamma \times (1+\ltilde\Bbb Z_l))^{\prim}$, 
we have $\nu\in\Bbb Z_l^{\ast}$, 
hence 
$\gamma=\tilde\gamma_{\p}^{\nu}\in \Gamma^{\prim}$.
Further, 
the index of $\langle \alpha \rangle = \Image (w)$ in $1+\ltilde\Bbb Z_l$ is 
constant.
Note that 
we can 
distinguish group-theoretically whether or not 
the $\bZ_l$-extension $K_\infty/K$ 
corresponding to 
a $\bZ_l$-quotient of $G_{K,S}$ 
satisfies condition $(\ddagger)$.
We say that $S$ satisfies condition $(\star'_l)$
if for any $\bZ_l$-extension $K_{\infty}/K$ 
with $K_{\infty}L/L$ and $S(L)$ satisfying conditions $(\dagger')$ and $(\ddagger)$ for any finite subextension $K_S/L/K$, 
$K_{\infty}/K$ and $S$ satisfy condition $(\ast)$.
The assertion of 
Theorem \ref{1.17} is true even if we replace condition $(\star_l)$ with the weaker condition $(\star'_l)$.

Further, 
if we can distinguish group-theoretically 
the quotient $\Gamma_0$ of $G_{K,S}$ 
from 
the other 
$\bZ_l$-quotients, 
we can replace condition $(\star_l)$ in Theorem \ref{1.17} with ``$\# S= \infty$''.
\end{rem}

The following proposition gives a sufficient condition for condition $(\star_l)$.

\begin{prop}\label{1.20}
Assume that 
$\delta(S) \neq 0$ (cf. Notations).
Then any $\bZ_l$-extension $K_{\infty}/K$ 
and $S$ 
satisfy condition $(\ast)$.
In particular, $S$ satisfies condition $(\star_l)$.
\end{prop}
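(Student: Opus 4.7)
The plan is to prove the contrapositive: if $(\ast)$ fails for some $\bZ_l$-extension $K_\infty/K$, then $\delta(S) = 0$. So I would begin by assuming there exists a cofinite subset $T \subset S \setminus \Sigma$ such that the closed subgroup $H$ of $G \defeq G(K_\infty K_{\infty,0}/K)$ generated by the Frobenius elements at primes in $T$ is not open. By Remark \ref{1.12}, $G$ is a free $\bZ_l$-module of rank $r \in \{1,2\}$, and ``$H$ not open'' is equivalent to ``$\bZ_l$-rank of $H$ is $< r$''.

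The first step would be to extract from $H$ a $\bZ_l$-quotient of $G$ killing $H$. When $r = 1$, $H$ must be trivial (since $G \simeq \bZ_l$ is torsion-free), so we simply take $L_\infty = K_{\infty,0}$. When $r = 2$, $H \otimes_{\bZ_l} \bQ_l$ is a proper $\bQ_l$-subspace of $G \otimes_{\bZ_l} \bQ_l \simeq \bQ_l^2$, hence there is a nonzero continuous homomorphism $G \to \bZ_l$ vanishing on $H$; its image is a nontrivial, hence rank-$1$, closed subgroup of $\bZ_l$. In either case the corresponding subextension $L_\infty \subset K_\infty K_{\infty,0}$ is a $\bZ_l$-extension of $K$ in which every $\p \in T$ has trivial Frobenius image, and since $L_\infty/K$ is unramified outside $\Sigma$, this means every $\p \in T$ splits completely in $L_\infty/K$.

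The second step is a standard Chebotarev density computation. For each $n \geq 1$, let $L_n \subset L_\infty$ be the subextension with $[L_n:K] = l^n$; the Dirichlet-density form of Chebotarev yields $\delta(\cs(L_n/K)) = l^{-n}$. Since $T \subset \cs(L_\infty/K) \subset \cs(L_n/K)$ for every $n$, I conclude $\delta_{\sup}(T) \leq l^{-n}$ for all $n$, hence $\delta_{\sup}(T) = 0$. As $(S \setminus \Sigma) \setminus T$ is finite and $\Sigma_f \subset P_{K,l}$ is finite, this upgrades to $\delta_{\sup}(S) = 0$, contradicting $\delta(S) \neq 0$. The ``In particular'' clause is then immediate from the definition of condition $(\star_l)$, which only demands $(\ast)$ for a restricted class of $\bZ_l$-extensions, while I have in fact established it for every $\bZ_l$-extension of $K$.

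The only step requiring nontrivial input is the $r = 2$ case of Step~1, and even there the content is no more than elementary $\bZ_l$-linear algebra (a proper $\bZ_l$-submodule of $\bZ_l^2$ is killed by some nonzero functional); the remainder is bookkeeping and a standard invocation of Chebotarev, so I do not expect any serious obstacle.
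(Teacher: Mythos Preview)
Your argument is correct and complete, but it takes a different route from the paper's. The paper invokes the Chebotarev density theorem for \emph{infinite} extensions (\cite{Serre}, Chapter I, 2.2, Corollary 2) directly: since $\delta_{\sup}(T) = \delta_{\sup}(S) > 0$, the closed subgroup of $G(K_\infty K_{\infty,0}/K)$ generated by the Frobenius elements at $T$ has positive Haar measure, and a closed subgroup of positive Haar measure in a compact group is automatically open. This is a two-line proof. Your approach instead reduces to the \emph{finite} Chebotarev theorem by first extracting a $\bZ_l$-quotient $L_\infty/K$ in which every prime of $T$ splits completely, then bounding $\delta_{\sup}(T)$ by $\delta(\cs(L_n/K)) = l^{-n}$ at each finite layer. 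What your approach buys is that it avoids any reference to Haar measure or the infinite-extension version of Chebotarev, making it more self-contained; what the paper's approach buys is brevity.

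One cosmetic remark: your final summary sentence (``a proper $\bZ_l$-submodule of $\bZ_l^2$ is killed by some nonzero functional'') is literally false as stated (e.g.\ $l\bZ_l^2$), though the argument you actually give two paragraphs earlier is the correct one, namely that a submodule of $\bZ_l$-rank $<2$ is killed by some nonzero functional. You may want to tighten the wording there.
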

\begin{proof}
For 
$T\subset (S\setminus \Sigma)$ 
with
$\#((S\setminus \Sigma)\setminus T)<\infty$, 
$\delta_{\sup}(T)=\delta_{\sup}(S) > 0$.
Hence, 
by the Chebotarev density theorem for infinite extensions 
(\cite{Serre}, Chapter I, 2.2, COROLLARY 2), 
the closed subgroup of $G(K_{\infty}K_{\infty,0}/K)$ generated by the Frobenius elements at primes in $T$ 
has positive Haar measure in $G(K_{\infty}K_{\infty,0}/K)$, 
so that 
it is an open subgroup.
\end{proof}

\section{Local correspondence and recovering various local invariants}

In this section, we obtain 
the ``local correspondence'', 
and study its properties.


\begin{prop}\label{2.0}
Let $K$ be a number field, and $S$ a set of primes of $K$ 
with $P_{K,\infty} \subset S$.
Assume 
that $\# P_{S,f} \geq 2$.
Then, for $\overline{\p}, \overline{\q} \in S_f(K_S)$, $D_{\overline{\p}}(K_S/K) = D_{\overline{\q}}(K_S/K)$ if and only if $\overline{\p} = \overline{\q}$.
Let $l \in P_{S,f}$.
Then 
the set 
$\{ D_{\overline{\p}} \subset G_{K,S} \mid \overline{\p}\in S_f(K_S) \}$
is characterized group-theoretically from $G_{K,S}$ and the character $G_{K,S}\twoheadrightarrow \Gamma_0 \overset{w}\rightarrow 1+\ltilde\bZ_l$.
In particular, if $S$ satisfies condition $(\star_l)$, 
then 
$\{ D_{\overline{\p}} \subset G_{K,S} \mid \overline{\p}\in S_f(K_S) \}$
is characterized group-theoretically from $G_{K,S}$ (and $l$).
\end{prop}
\begin{proof}
The first assertion follows immediately from \cite{Ivanov}, Corollary 2.7(ii).
To prove the second 
assertion, 
we modify a little the reconstruction algorithm of (ii)$\rightsquigarrow$(i) in \cite{Ivanov}, Theorem 1.1\footnote{
In the proof of (ii)$\rightsquigarrow$(i) in \cite{Ivanov}, Theorem 1.1, 
where the finiteness of $S$ is assumed, 
we have to take an open subgroup $U_0$ of $G_{K,S}$ corresponding to a field which contains the $l$-th roots of unity and is totally imaginary, as in the last paragraph of \S 3 of \cite{Ivanov}.
If $S$ is finite, 
$\Ker(G_{K,S} \twoheadrightarrow G_{K,S}^{\ab}/(G_{K,S}^{\ab})^
{\phi(\ltilde)}
)$ satisfies the above condition, 
where $\phi$ is Euler's totient function.
On the other hand, if $S$ is not finite, 
the author does not know how to take such an open subgroup group-theoretically from $G_{K,S}$. 
In the rest of the proof, the finiteness of $S$ is not used.
} 
as follows.
Let $U_0$ be any open 
subgroup of $G_{K,S}$, 
corresponding to $K_{S}/L_0/K$.
Write $\chi_{L_0}^{(l),\naive} : U_0 \hookrightarrow G_{K,S}\twoheadrightarrow \Gamma_{K,0} \overset{w}\rightarrow 1+\ltilde\bZ_l\hookrightarrow{\bZ_l}^{\ast}$. Note that $\chi_{L_0}^{(l),\naive} = \chi_{L_0}^{(l)}$ if (and only if) $U_0 \subset G_{K(\mu_{\ltilde}),S(K(\mu_{\ltilde}))}$.
For each $U_0$, by 
using $\chi_{L_0}^{(l),\naive}$ 
instead of $\chi_{L_0}^{(l)}$ 
to run if possible 
the reconstruction algorithm as in the last paragraph of \S 3 of \cite{Ivanov} and \cite{Ivanov}, Remark 3.6, 
we obtain a set $\Dec_{U_0}^{\naive}$ consisting of subgroups of $G_{K,S}$ (if not possible, define $\Dec_{U_0}^{\naive}$ to be $\emptyset$ for convenience).
By exactly the same proof of (ii)$\rightsquigarrow$(i) in \cite{Ivanov}, Theorem 1.1, 
we have 
$\Dec_{U_0}^{\naive}=\{ D_{\overline{\p}} \subset G_{K,S} \mid \overline{\p}\in S_f(K_S) \}$
if $U_0 \subset G_{K(\mu_{\ltilde}),S(K(\mu_{\ltilde}))}$.
Thus, by taking $U_0$ such that 
$\Dec_{U_0}^{\naive}=\Dec_{U_0'}^{\naive}$ for any open subgroup $U_0'$ of $U_0$, 
we obtain $\Dec_{U_0}^{\naive} =\{ D_{\overline{\p}} \subset G_{K,S} \mid \overline{\p}\in S_f(K_S) \}$.

The last assertion follows from the second and Theorem \ref{1.17}.
\end{proof}

\begin{prop}(\cite{Chenevier-Clozel}, Remarque 5.3)\label{2.4}
Let $K$ be a totally real number field and $S$ a set of primes of $K$.
Assume that 
there exists a prime number $l$
with 
$P_{l} \cup P_\infty \subset S$.
Then 
the decomposition groups in $G_{K,S}$ 
at primes in $(S_f \setminus P_l)(K_S)$ are full.
\end{prop}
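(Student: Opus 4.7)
The plan is to invoke \cite{Chenevier-Clozel}, Remarque 5.3, which is precisely the statement above. For context, I sketch the strategy underlying that reference. Fix $\p \in S_f \setminus P_l$ and a prime $\bap \in P_{K_S}$ above $\p$; the canonical surjection $G_{K_\p} \twoheadrightarrow D_{\bap}$ is an isomorphism if and only if $D_{\bap}$ is full, equivalently, if and only if every finite separable extension of $K_\p$ is realized as $L_{\bap|_L}/K_\p$ for some finite subextension $K_S/L/K$.

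The idea is to produce sufficiently many Galois representations $\rho : G_{K,S} \to \mathrm{GL}_n(\overline{\bQ}_l)$ realizing prescribed local behavior at $\p$, using the automorphic machinery available for totally real base fields. Concretely, for any supercuspidal representation $\pi_\p$ of $\mathrm{GL}_n(K_\p)$, one constructs (by trace-formula and base-change arguments, passing through a CM quadratic extension when necessary) a cohomological, essentially self-dual cuspidal automorphic representation $\pi$ of $\mathrm{GL}_n(\mathbb{A}_K)$ whose local component at $\p$ is $\pi_\p$, whose archimedean components are cohomological, and which is unramified outside $S$. By now-standard constructions of Galois representations attached to such $\pi$, one obtains $\rho_\pi : G_{K,S} \to \mathrm{GL}_n(\overline{\bQ}_l)$ whose restriction to $G_{K_\p}$ corresponds to $\pi_\p$ via local Langlands.

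Varying $\pi_\p$ over all supercuspidals of $\mathrm{GL}_n(K_\p)$ and over all $n$, together with unramified twists, the composites $G_{K_\p} \twoheadrightarrow D_{\bap} \xrightarrow{\rho_\pi} \mathrm{GL}_n(\overline{\bQ}_l)$ exhaust enough representations of $G_{K_\p}$ that every nontrivial element of $\ker(G_{K_\p} \twoheadrightarrow D_{\bap})$ would have to act nontrivially through some $\rho_\pi$, a contradiction since $\rho_\pi$ factors through $D_{\bap}$. Hence $D_{\bap} = G_{K_\p}$, i.e., $D_{\bap}$ is full. The main obstacle, entirely packaged in \cite{Chenevier-Clozel}, is the automorphic construction of $\pi$ with prescribed local data at $\p$ while controlling ramification elsewhere and preserving the cohomological and self-duality conditions required to attach a Galois representation; once such $\pi$'s are in hand, the deduction of fullness is a formal consequence of local Langlands.
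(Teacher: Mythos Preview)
Your proposal is correct and matches the paper's approach exactly: the paper gives no proof at all for this proposition, simply citing \cite{Chenevier-Clozel}, Remarque 5.3 in the statement itself. Your additional expository sketch of the automorphic strategy behind that reference is accurate and goes beyond what the paper provides, but the core ``proof'' in both cases is the bare citation.
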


Note that 
with the notations 
in Proposition \ref{2.4}, 
if $\# P_{S,f} \geq 2$, 
then 
the decomposition groups in $G_{K,S}$ 
at primes in $S_f(K_S)$ are full.

For the following two lemmas, 
we use the theory of 
groups of $l$-decomposition type 
(cf. \cite{Ivanov}, \S 2).

\begin{lem}\label{2.1}
For $i=1,2$, let $p_i$ be a prime number, $\kappa_i$ a $p_i$-adic field and $\lambda_1/\kappa_1$ a Galois extension. 
Assume that 
there exists an 
isomorphism 
$\sigma :G(\lambda_1/\kappa_1) \isom G_{\kappa_2}$.
Then 
the residue characteristics and the residual degrees of $\kappa_1$ and $\kappa_2$ coincide, respectively, 
and $\sigma$ induces a bijection between the sets of Frobenius lifts.
Further, 
$[\kappa_1:\bQ_{p_1}]$ is 
greater than or equal to 
$[\kappa_2:\bQ_{p_2}]$.
\end{lem}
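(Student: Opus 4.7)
The plan is to handle the three claims in sequence, using local class field theory for the residue characteristic and the local degree, and Ivanov's theory of groups of $l$-decomposition type (\cite{Ivanov}, \S 2) for the inertia and Frobenius lifts.

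First, I would simultaneously prove $p_1=p_2$ and the inequality $[\kappa_1:\bQ_{p_1}] \geq [\kappa_2:\bQ_{p_2}]$ via a rank computation on the abelianization. For any $p$-adic field $\kappa$, local class field theory gives $G_\kappa^{\ab} \simeq \widehat{\kappa^\times}$, and the torsion-free quotient $(G_\kappa^{\ab})^{/\tor}$ is isomorphic to $\hat{\bZ} \oplus \bZ_p^{[\kappa:\bQ_p]}$. Thus its $\bZ_l$-rank equals $1$ for $l\neq p$ and $1+[\kappa:\bQ_p]$ for $l=p$. Applied to $\kappa_2$, this gives $\rk_{\bZ_{p_2}}((G_{\kappa_2}^{\ab})^{/\tor}) = 1+[\kappa_2:\bQ_{p_2}] \geq 2$. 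Since $\sigma$ induces $G(\lambda_1/\kappa_1)^{\ab} \isom G_{\kappa_2}^{\ab}$ and $G(\lambda_1/\kappa_1)^{\ab}$ is a quotient of $G_{\kappa_1}^{\ab}$, I obtain a surjection $G_{\kappa_1}^{\ab} \twoheadrightarrow G_{\kappa_2}^{\ab}$. Surjections of profinite abelian groups induce non-increasing $\bZ_l$-ranks on torsion-free quotients, so $\rk_{\bZ_{p_2}}((G_{\kappa_1}^{\ab})^{/\tor}) \geq 2$, which forces $p_2=p_1$ and then yields $[\kappa_1:\bQ_{p_1}] \geq [\kappa_2:\bQ_{p_2}]$.

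Next, once $p = p_1 = p_2$ is fixed, I would invoke \cite{Ivanov}, \S 2 to deduce that the inertia subgroup and the set of Frobenius lifts are group-theoretic invariants of the abstract profinite group. Consequently, $\sigma$ maps $I(\lambda_1/\kappa_1)$ onto $I_{\kappa_2}$ and induces an isomorphism between the maximal unramified quotients, which are pro-cyclic and carry distinguished Frobenius generators. The residual degree $f_{\kappa_i/\bQ_{p_i}}$ is then read off from the action of this Frobenius on roots of unity of order prime to $p$ (equivalently, via the cyclotomic character), giving $f_{\kappa_1/\bQ_{p_1}} = f_{\kappa_2/\bQ_{p_2}}$. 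Since Frobenius lifts are precisely the preimages of the distinguished Frobenius in the maximal unramified quotient, $\sigma$ sends the set of Frobenius lifts of one side bijectively onto that of the other.

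The main obstacle will be the second step: the precise verification that the inertia and the Frobenius can be recovered group-theoretically from the abstract profinite structure of $G(\lambda_1/\kappa_1)$. This rests on Ivanov's machinery of groups of $l$-decomposition type. The first step, by contrast, is essentially formal, combining local class field theory with elementary facts about ranks of torsion-free quotients under surjections.
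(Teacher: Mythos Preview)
Your first step is correct and matches the paper's approach in substance: the paper also detects $p$ as the unique prime $l$ admitting a surjection $G_{\kappa_i}\twoheadrightarrow\bZ_l^2$, and obtains the degree inequality from $\rank_{\bZ_p}G_\kappa^{\ab,(p),/\tor}=[\kappa:\bQ_p]+1$. Your packaging of both facts into a single rank computation is a harmless reorganization.

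The gap is in your second step. You propose to ``invoke \cite{Ivanov}, \S 2 to deduce that the inertia subgroup and the set of Frobenius lifts are group-theoretic invariants of the abstract profinite group'', but this is not how the machinery actually applies here. The difficulty is that $G(\lambda_1/\kappa_1)$ is only a \emph{quotient} of $G_{\kappa_1}$, so a group-theoretic characterization of $I_{\kappa_2}$ inside $G_{\kappa_2}$ transports via $\sigma$ to \emph{some} subgroup of $G(\lambda_1/\kappa_1)$, but you have no a priori reason this subgroup equals the image $I(\lambda_1/\kappa_1)$ of $I_{\kappa_1}$. The paper resolves this not by characterizing inertia directly, but by first proving that $\Ker(\phi)$ is pro-$p$: for each $l\neq p$ the $l$-Sylow subgroups of $G_{\kappa_1}$ and $G_{\kappa_2}$ are of $l$-decomposition type, and Ivanov's Lemma 2.2 says a surjection between such groups is injective, so $\phi$ is injective on every $l$-Sylow with $l\neq p$. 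Once $\Ker(\phi)\subset V_{\kappa_1}$ (the wild inertia, which is the unique normal pro-$p$ subgroup by \cite{NSW}, (7.5.7)), one gets an induced isomorphism $G_{\kappa_1}^{\tr}\isom G_{\kappa_2}^{\tr}$ of the \emph{tame} quotients, and it is only at this level that the tame inertia and Frobenius lifts are characterized group-theoretically.

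Your proposed recovery of the residual degree from the Frobenius action on tame inertia is a legitimate alternative to the paper's method (which reads off $\#\kappa(\p)$ from the prime-to-$p$ torsion of $(G_\kappa^{\tr})^{\ab}$), but it too presupposes the tame-quotient isomorphism above. So the missing ingredient throughout is the ``$\Ker(\phi)$ is pro-$p$'' step; once you insert that, the rest of your outline goes through.
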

\begin{proof}
By local class field theory, 
for $i=1,2$, 
the residue characteristic $p_i$ of $\kappa_i$ 
can be characterized as the unique prime $l$ 
such that 
there exists a surjection $G_{\kappa_i} \twoheadrightarrow \bZ_l^2$.
Thus, 
we have $p_1=p_2$, 
and 
write $p$ for them.
We write $\phi$ for the composite of the canonical surjection 
$G_{\kappa_1} \twoheadrightarrow G(\lambda_1/\kappa_1)$ 
with $\sigma$.
For any prime $l$ different from $p$, 
by \cite{Ivanov}, 2.2, Local situation, 
any $l$-Sylow subgroup $G_{\kappa_1,l}$ of $G_{\kappa_1}$ 
is of $l$-decomposition type. 
$\phi(G_{\kappa_1,l})$ 
is an $l$-Sylow subgroup of $G_{\kappa_2}$, 
hence is of $l$-decomposition type.
Therefore, by \cite{Ivanov}, Lemma 2.2, 
$\phi|_{G_{\kappa_1,l}}$ is injective.
Thus, 
$\Ker(\phi)$ is a pro-$p$ group, 
hence, 
by \cite{NSW}, (7.5.7) Corollary, 
is contained in $V_{\kappa_1}$.
Therefore, again 
by \cite{NSW}, (7.5.7) Corollary, 
$\phi(V_{\kappa_1})$ is the maximal normal pro-$p$ subgroup of $G_{\kappa_2}$, 
and hence 
$\phi(V_{\kappa_1})=V_{\kappa_2}$.
Therefore, we have the following commutative diagram 
\begin{equation*}
\xymatrix{
G_{\kappa_1}\ar@{->>}[r] \ar@{->>}[rd] & G(\lambda_1/\kappa_1)\ar[r]^-{\simeq}_-{\sigma} \ar@{->>}[d] & G_{\kappa_2} \ar@{->>}[d] \\
& G_{\kappa_1}^{\tr} \ar[r]^-{\simeq}_-{} & G_{\kappa_2}^{\tr}.
}
\end{equation*}
For a 
$p$-adic field $\kappa$, 
by local class field theory, 
the order of the residue field of $\kappa$ 
is equal to 
$\# ((G_{\kappa}^{\tr,\ab})_{\tor}/(G_{\kappa}^{\tr,\ab}[p^{\infty}])) + 1$.
Thus, 
the orders of the residue fields of 
$\kappa_1$ and $\kappa_2$ coincide, 
so that 
their residual degrees also coincide.
Further, the subgroup $I_{\kappa}/V_{\kappa} \subset G_{\kappa}^{\tr}$ can be characterized group-theoretically from $G_{\kappa}^{\tr}$ by \cite{NSW}, (7.5.7) Corollary, 
and the sets of Frobenius lifts of $G_{\kappa}^{\tr}$ from $G_{\kappa}^{\tr}$ and the order of the residue field of $\kappa$ 
by \cite{NSW}, (12.1.8) Lemma.
Therefore, $\sigma$ induces  a bijection between the sets of Frobenius lifts.
Finally, 
by local class field theory, 
$G_{\kappa}^{\ab,(p),/\tor}$ is a free $\bZ_p$-module of rank $[\kappa:\bQ_p] + 1$.
The last assertion follows from this.
\end{proof}

About Frobenius elements, 
the following lemma is also useful.

\begin{lem}\label{2.1.5}
Let $p$, $l$ be different prime numbers, $\kappa$ a $p$-adic field and $\lambda/\kappa$ a Galois extension. 
Then 
$\kappa(\mu_l)^{(l)} \subset \lambda$ 
if and only if 
there exists an open subgroup $U$ of $G(\lambda/\kappa)$ 
such that $U^{(l)}$ is of $l$-decomposition type.
Assume that 
these conditions hold.
Then the surjection 
$G(\lambda/\kappa) \twoheadrightarrow (G(\lambda/\kappa)/I(\lambda/\kappa))^{(l)} ( \simeq \bZ_l )$
is characterized group-theoretically from $G(\lambda/\kappa)$.
Further, 
the data of 
the Frobenius element in $(G(\lambda/\kappa)/I(\lambda/\kappa))^{(l)}$
is equivalent to the data of 
the residue characteristic 
and the residual degree of $\kappa$.
\end{lem}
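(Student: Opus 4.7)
My plan is to handle the three assertions of the lemma in sequence.

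For the biconditional, the forward direction is immediate if I take $U := G(\lambda/\kappa(\mu_l))$: since $\kappa(\mu_l)^{(l)}$ is the maximal pro-$l$ extension of $\kappa(\mu_l)$ and lies in $\lambda$, the maximal pro-$l$ quotient $U^{(l)}$ is canonically identified with $G_{\kappa(\mu_l)}^{(l)}$, which is of $l$-decomposition type by \cite{Ivanov}, 2.2. For the converse, let $\lambda_U$ denote the fixed field of $U$ in $\lambda$. I first rule out $\mu_l \not\subset \lambda_U$: in that case, since $p \neq l$, the maximal pro-$l$ extension of $\lambda_U$ is the unramified $\bZ_l$-extension, so $G_{\lambda_U}^{(l)}$ is pro-cyclic --- contradicting its surjection onto the non-abelian $U^{(l)}$. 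Hence $\mu_l \subset \lambda_U$, and $G_{\lambda_U}^{(l)}$ is itself of $l$-decomposition type; the surjection $G_{\lambda_U}^{(l)} \twoheadrightarrow U^{(l)}$ must then be an isomorphism (by comparing cohomological dimensions and $\bZ_l$-ranks of abelianizations), forcing $\lambda_U^{(l)} \subset \lambda$. Since $\kappa(\mu_l) \subset \lambda_U$, we conclude $\kappa(\mu_l)^{(l)} \subset \lambda_U^{(l)} \subset \lambda$.

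For the second assertion, I fix any such $U$. Inside $U^{(l)} \simeq \bZ_l \rtimes \bZ_l$, the inertia subgroup is the unique maximal closed normal pro-$l$ abelian subgroup on which conjugation acts non-trivially (equivalently, the closure of the commutator subgroup), so it is characterized purely group-theoretically. The composite $U \twoheadrightarrow U^{(l)} \twoheadrightarrow \bZ_l$ (quotient by this inertia) agrees with the restriction to $U$ of the desired map $G(\lambda/\kappa) \twoheadrightarrow (G(\lambda/\kappa)/I(\lambda/\kappa))^{(l)}$. Since the target is torsion-free and $U$ has finite index in $G(\lambda/\kappa)$, the restriction uniquely determines the full surjection; equivalently, its kernel is the smallest closed normal subgroup of $G(\lambda/\kappa)$ containing the group-theoretically characterized inertia of $U^{(l)}$ for some (hence any) such $U$.

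For the last assertion, the Frobenius element of $(G(\lambda/\kappa)/I(\lambda/\kappa))^{(l)}$ restricts, after raising to the appropriate $l$-power, to the canonical Frobenius generator of $U^{(l)}$, whose conjugation on the inertia subgroup is by multiplication by the positive integer $q$ equal to the order of the residue field of $\lambda_U$. Given $(p,f)$ one has $q = p^{f \cdot [\lambda_U : \kappa]}$, which pins down the Frobenius via its prescribed action; conversely, from the Frobenius one reads off $q$ as an element of $\bZ_{>0}$, and its prime factorization together with $[\lambda_U : \kappa]$ recovers $(p,f)$. The main obstacle I anticipate is justifying the rigidity input that a surjection between two groups of $l$-decomposition type must be an isomorphism; I expect this to reduce, via \cite{Ivanov}, 2.2 and standard Euler-characteristic / Demushkin-type arguments, to a comparison of basic numerical invariants, after which the remaining steps are essentially bookkeeping.
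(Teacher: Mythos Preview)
Your treatment of the biconditional is essentially the paper's: take $U=G(\lambda/\kappa(\mu_l))$ for the forward direction, and for the converse rule out $\mu_l\not\subset\lambda_U$ and then invoke the rigidity of groups of $l$-decomposition type (which is exactly \cite{Ivanov}, Lemma~2.2, so your anticipated obstacle dissolves). One small slip: the inertia in $U^{(l)}\simeq\bZ_l\rtimes\bZ_l$ is \emph{not} the closure of the commutator subgroup (the latter is $(q-1)I$, a proper finite-index subgroup of $I$); your other description as the unique maximal closed normal abelian subgroup is fine. For the second assertion the paper takes a shorter route than yours: since $p\neq l$, the pro-$l$ part of $G_\kappa^{\ab}$ is $\bZ_l\times(\text{finite})$, so $G(\lambda/\kappa)$ has a \emph{unique} $\bZ_l$-quotient, and that is the desired surjection --- no need to pass through $U$.

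The genuine gap is in your third assertion. You work with an \emph{arbitrary} $U$ and write $q=p^{\,f\cdot[\lambda_U:\kappa]}$, but the correct exponent is $f\cdot f_{\lambda_U/\kappa}$ (the residue degree of $\lambda_U/\kappa$), and these differ whenever $\lambda_U/\kappa$ is ramified. Such $U$ do occur: e.g.\ if $\mu_l\subset\kappa$ one may take $\lambda_U=\kappa(\pi^{1/l})$, which is totally ramified of degree $l$, yet $U^{(l)}$ is still of $l$-decomposition type. With this $U$ your formula gives $p^{fl}$ instead of $p^f$, and the recovery of $(p,f)$ fails. Relatedly, the phrase ``after raising to the appropriate $l$-power'' is off: the index one must divide by is $f_{\lambda_U/\kappa}$, which for $\lambda_U=\kappa(\mu_l)$ divides $l-1$ and is prime to $l$. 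The paper repairs this by first characterizing $G(\lambda/\kappa(\mu_l))$ group-theoretically as the \emph{maximal} open subgroup whose pro-$l$ quotient is of $l$-decomposition type; since $\kappa(\mu_l)/\kappa$ is unramified (as $l\neq p$), one then has $n:=[\kappa(\mu_l):\kappa]=f_{\kappa(\mu_l)/\kappa}$, the action of $\Frob_{\kappa(\mu_l)}$ on inertia is by $p^{nf}$, and $\Frob_\kappa$ is recovered as the unique $n$-th root in the torsion-free group $(G/I)^{(l)}\simeq\bZ_l$. Restricting to this canonical $U$ (rather than ``any such $U$'') is what makes the argument go through.
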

\begin{proof}
If $\kappa(\mu_l)^{(l)} \subset \lambda$, 
then, 
by \cite{NSW}, (7.5.9) Proposition, 
$G(\lambda/\kappa(\mu_l))^{(l)} = G(\kappa(\mu_l)^{(l)}/\kappa(\mu_l))$ 
is of $l$-decomposition type.
If $\kappa(\mu_l)^{(l)} \not\subset \lambda$, 
then, 
for any finite subextension $\kappa'$ of $\lambda/\kappa$ 
with $\mu_l \subset \kappa'$, 
the canonical surjection 
$G_{\kappa'}^{(l)} \twoheadrightarrow G(\lambda/\kappa')^{(l)}$ 
is not an isomorphism.
Hence, by \cite{NSW}, (7.5.9) Proposition and \cite{Ivanov}, Lemma 2.2, 
$G(\lambda/\kappa')^{(l)}$ is not of $l$-decomposition type.
For a finite subextension $\kappa'$ of $\lambda/\kappa$ with $\mu_l \not\subset \kappa'$, by \cite{NSW}, (7.5.9) Proposition, 
$G(\lambda/\kappa')^{(l)}$ is not of $l$-decomposition type.

Assume that $\kappa(\mu_l)^{(l)} \subset \lambda$.
By \cite{NSW}, (7.5.9) Proposition, 
the surjection 
$G(\lambda/\kappa) \twoheadrightarrow (G(\lambda/\kappa)/I(\lambda/\kappa))^{(l)}
$
is the unique $\bZ_l$-quotient of $G(\lambda/\kappa)$, 
and 
the open subgroup $G(\lambda/\kappa(\mu_l))$ of $G(\lambda/\kappa)$ is the maximal open subgroup of $G(\lambda/\kappa)$ 
whose maximal pro-$l$ quotient is 
of $l$-decomposition type.
Now, we set $n \defeq (G(\lambda/\kappa):G(\lambda/\kappa(\mu_l)))$, 
$G \defeq G(\lambda/\kappa(\mu_l))^{(l)} = G(\kappa(\mu_l)^{(l)}/\kappa(\mu_l))$ 
and 
$I \defeq I(\kappa(\mu_l)^{(l)}/\kappa(\mu_l))$, 
and write $f$ for the residual degree of $\kappa$.
By \cite{NSW}, (7.5.9) Proposition, 
$G$ 
is of $l$-decomposition type.
Hence, by \cite{Ivanov}, Lemma 2.2, 
the subgroup 
$I$ 
of 
$G$ 
is characterized group-theoretically from 
$G$, 
and 
$G$ 
is a semi-direct product of 
$G/I$
by 
$I$.
Further, by \cite{NSW}, (7.5.2) Proposition, 
the character defining this semi-direct product 
is an injection 
$G/I \to \Aut(I) = {\bZ_l}^{\ast}$
under which the Frobenius element 
$\Frob_{\kappa(\mu_l)}$
maps to $p^{nf}$.
Finally, under the canonical homomorphism 
$G/I \to (G(\lambda/\kappa)/I(\lambda/\kappa))^{(l)}$
induced by the inclusion $G(\lambda/\kappa(\mu_l)) \hookrightarrow G(\lambda/\kappa)$, 
$\Frob_{\kappa(\mu_l)}$ maps to $\Frob_{\kappa}^n$.
Since 
$(G(\lambda/\kappa)/I(\lambda/\kappa))^{(l)} \simeq \bZ_l$ 
is torsion free, 
we obtain the last assertion.
\end{proof}

Note that, 
by Lemma \ref{2.1.5}, 
it is possible to distinguish whether or not $\kappa(\mu_l)^{(l)} \subset \lambda$ group-theoretically from $G(\lambda/\kappa)$.

\begin{definition}
For $i=1,2$, 
let $K_i$ be a number field, $S_i$ a set of primes of $K_i$, 
$T_i \subset S_{i,f}$, 
and $\sigma :G_{K_1,S_1}\isom G_{K_2,S_2}$ 
an 
isomorphism.
We say that 
the local correspondence 
between $T_1$ and $T_2$ 
holds for $\sigma$, 
if the following conditions are satisfied: 
\begin{itemize}

\item[$\bullet$]
For any $\overline{\p}_1 \in T_1(K_{1,S_1})$, 
there is a unique prime $\sigma_{\ast}(\overline{\p}_1) \in T_2(K_{2,S_2})$ 
with $\sigma(D_{\overline{\p}_1}) = D_{\sigma_{\ast}(\overline{\p}_1)}$, such that 
$\sigma_{\ast} \colon T_1(K_{1,S_1})\to T_2(K_{2,S_2})$, 
$\overline{\p}_1 \mapsto \sigma_{\ast}(\overline{\p}_1)$ 
is a bijection. 

\end{itemize}

Then 
$\sigma_{\ast}$ is Galois-equivariant, 
i.e., 
for each $g \in G_{K_1,S_1}$ 
and 
$\bap_1 \in T_1(K_{1,S_1})$, 
$\sigma_{\ast}(g \bap_1) = \sigma(g)\sigma_{\ast}(\bap_1)$.
Further, 
for any open subgroup $U_1$ of $G_{K_1,S_1}$, we set $U_2 \defeq \sigma(U_1)$ and for $i=1,2$, write $K_{i,{S_i}}/L_i/K_i$ for the subextension corresponding to $U_i$.
Two primes $\bap_1$, $\baq_1 \in T_{1}(K_{1,S_1})$ restrict to the same prime of $L_1$ (this condition is equivalent to the condition that $D_{\bap_1}$ and $D_{\baq_1}$ are conjugate by an element in $U_1$), 
if and only if 
$\sigma_{\ast}(\bap_1), \sigma_{\ast}(\baq_1) \in T_2(K_{2,S_2})$ 
restrict to the same prime of $L_2$, 
and hence $\sigma_{\ast}$
induces a bijection
$\sigma_{\ast,U_1}=\sigma_{\ast,L_1} \colon T_1(L_1) \isom T_2(L_2)$.

Moreover, 
we say that 
the good local correspondence 
between $T_1$ and $T_2$ 
holds for $\sigma$, 
if the following conditions are satisfied: 
\begin{itemize}
\item[$\bullet$]
The local correspondence between $T_1$ and $T_2$ 
holds for $\sigma$.

\item[$\bullet$]
$\sigma_{\ast,K_1}$ preserves the residue characteristics and
the residual degrees 
of all primes in $T_1$.
\end{itemize}
\end{definition}

\begin{theorem}\label{2.5}
For $i=1,2$, 
let $K_i$ be a number field, $S_i$ a set of primes of $K_i$ 
with $P_{K_i,\infty} \subset S_i$ 
and 
$\sigma :G_{K_1,S_1}\isom G_{K_2,S_2}$ an isomorphism.
Assume that 
$\# P_{S_i,f} \geq 2$ for $i=1,2$ 
and 
that there exists 
a prime number $l \in P_{S_1,f} \cap P_{S_2,f}$ 
such that 
$S_i$ satisfies condition $(\star_l)$ for $i=1,2$.
Then 
the local correspondence between $S_{1,f}$ and $S_{2,f}$
holds for $\sigma$.
Further, 
let $T_1 \subset S_{1,f}$ and $T_2 \subset S_{2,f}$ be subsets 
between which the local correspondence holds for $\sigma$
and 
assume 
that 
for 
one $i$,
there exist a totally real subfield $K_{i,0} \subset K_i$ and a set of primes $T_{i,0}$ of $K_{i,0}$ 
such that $T_{i,0}(K_i)=T_i$.
Then 
the good local correspondence between $T_1$ and $T_2$
holds for $\sigma$.
\end{theorem}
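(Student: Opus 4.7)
The plan is to prove the two assertions in turn. For the local correspondence (first assertion), I feed the hypothesis of Proposition~\ref{2.3} by combining Corollary~\ref{1.19} with an open subgroup on which both $l$-adic cyclotomic characters land in $1+\ltilde\bZ_l$. For the good local correspondence (second assertion), I reduce to the totally real base $K_{1,0}$ and apply Proposition~\ref{2.4} together with its subsequent remark.

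For the first assertion, note that $P_{K_i,\infty}\subset S_i$ combined with $l\in P_{S_i,f}$ gives $\Sigma_{K_i}\subset S_i$, so Corollary~\ref{1.19} applies and yields a canonical isomorphism $\Gamma_{K_1,0}\isom \Gamma_{K_2,0}$ intertwining $w_{K_1}$ and $w_{K_2}$. Set $W_i\defeq G(K_{i,S_i}/K_i(\mu_{\ltilde}))$, an open normal subgroup of $G_{K_i,S_i}$, and put $U_1\defeq W_1\cap\sigma^{-1}(W_2)$, so that $U_1\subset W_1$ and $\sigma(U_1)\subset W_2$. Since $W_i$ fixes $\mu_{\ltilde}$ pointwise, the restriction $\chi_{K_i}^{(l)}|_{W_i}$ takes values in $1+\ltilde\bZ_l$ and coincides with the composition $W_i\to\Gamma_{K_i,0}\xrightarrow{w_{K_i}}1+\ltilde\bZ_l$. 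Combined with the compatibility from Corollary~\ref{1.19}, this gives $\chi_{K_2}^{(l)}\circ\sigma|_{U_1}=\chi_{K_1}^{(l)}|_{U_1}$, and Proposition~\ref{2.3} delivers the local correspondence between $S_{1,f}$ and $S_{2,f}$.

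For the second assertion, the argument is symmetric in $i$, so I treat the case $i=1$: $K_{1,0}\subset K_1$ is totally real and $T_{1,0}\subset P_{K_{1,0}}$ satisfies $T_{1,0}(K_1)=T_1$. Define
\[
S_{1,0}\defeq T_{1,0}\ \cup\ P_{K_{1,0},\infty}\ \cup\ \bigcup_{p\in P_{S_1,f}}P_{K_{1,0},p}\ \subset\ P_{K_{1,0}}.
\]
Every prime of $K_1$ lying above a prime of $S_{1,0}$ belongs to $S_1$: above $T_{1,0}$ because $T_{1,0}(K_1)=T_1\subset S_1$, above $P_{K_{1,0},\infty}$ because $P_{K_1,\infty}\subset S_1$, and above $P_{K_{1,0},p}$ with $p\in P_{S_1,f}$ because $P_{K_1,p}\subset S_1$. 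Consequently $K_1\cdot K_{1,0,S_{1,0}}\subset K_{1,S_1}$. Fix $\bap_1\in T_1(K_{1,S_1})$, set $\p_1\defeq\bap_1|_{K_1}$, $\q_1\defeq\bap_1|_{K_{1,0}}\in T_{1,0}$, and let $\baq_1$ denote the restriction of $\bap_1$ to $K_{1,0,S_{1,0}}$. The hypothesis $\#P_{S_1,f}\geq 2$ ensures that $S_{1,0}$ contains $P_{K_{1,0},p}$ for at least two distinct rational primes $p$; together with the total reality of $K_{1,0}$, Proposition~\ref{2.4} and the remark following it give $(K_{1,0,S_{1,0}})_{\baq_1}=\overline{(K_{1,0})_{\q_1}}=\overline{(K_1)_{\p_1}}$. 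Since this algebraically closed field embeds into $(K_{1,S_1})_{\bap_1}$, the decomposition group $D_{\bap_1}(K_{1,S_1}/K_1)$ is full; applying the last part of Proposition~\ref{2.3} now yields the good local correspondence.

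The central technical tension is the calibration of $S_{1,0}$: it must be small enough that $K_1\cdot K_{1,0,S_{1,0}}\subset K_{1,S_1}$, yet large enough that the remark after Proposition~\ref{2.4} applies without the exclusion $S_{1,0,f}\setminus P_{K_{1,0},l}$. The hypothesis $\#P_{S_i,f}\geq 2$ is precisely what makes this possible, since a lone prime $p=l$ would leave the primes of $T_{1,0}$ above $l$ uncovered. For the first assertion, the parallel subtlety is that neither $W_1$ nor $W_2$ need be group-theoretically canonical individually, but the intersection $U_1=W_1\cap\sigma^{-1}(W_2)$ sidesteps this and suffices for Proposition~\ref{2.3}.
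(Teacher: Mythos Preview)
Your proof is correct and follows essentially the same approach as the paper: for the first assertion both arguments intersect the two subgroups corresponding to $K_i(\mu_{\ltilde})$ and invoke Corollary~\ref{1.19} to match the cyclotomic characters on the resulting open subgroup, then apply Proposition~\ref{2.3}. For the second assertion the paper instead enlarges $T_{i,0}$ by just two fibers $P_{K_{i,0},p}\cup P_{K_{i,0},p'}$ with $p,p'\in P_{S_i,f}$ and asserts the equality $K_{i,T_i\cup P_{K_i,\infty}}=K_{i,0,T_{i,0}\cup P_{K_{i,0},\infty}}$, whereas your argument via the containment $K_1\cdot K_{1,0,S_{1,0}}\subset K_{1,S_1}$ achieves the same fullness conclusion and is arguably cleaner, since it does not rely on $K_i/K_{i,0}$ being unramified outside $T_{i,0}\cup P_{K_{i,0},\infty}$.
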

\begin{proof}
By Proposition \ref{2.0}, 
the local correspondence between $S_{1,f}$ and $S_{2,f}$
holds for $\sigma$.
Let $i$ be the $i$ which appears in the last assumption.
Take two distinct primes $p, p' \in P_{S_i,f}$.
Then, by Proposition \ref{2.4}, the decomposition groups in 
$G_{K_{i,0},T_{i,0} \cup P_{p} \cup P_{p'} \cup P_{\infty}}$ 
at primes in $T_{i,0}(K_{i,0,T_{i,0} \cup P_{p} \cup P_{p'} \cup P_{\infty}})$ 
are full.
Since 
$K_{i,T_i \cup P_{p} \cup P_{p'} \cup P_{\infty}}=K_{i,0,T_{i,0} \cup P_{p} \cup P_{p'} \cup P_{\infty}}$, 
the decomposition groups in $G_{K_i,T_i \cup P_{p} \cup P_{p'} \cup P_{\infty}}$ 
at primes in $T_i(K_{i,T_i \cup P_{p} \cup P_{p'} \cup P_{\infty}})$ are full, 
so that 
the decomposition groups in $G_{K_i,S_i}$ 
at primes in $T_i(K_{i,S_i})$ are full.
Thus,
by Lemma \ref{2.1}, 
the good local correspondence between $T_1$ and $T_2$
holds for $\sigma$.
\end{proof}

\begin{rem}\label{2.6}
With the notations as in Theorem \ref{2.5}, for $i=1,2$, let $U_i$ be an open subgroup of $G_{K_i,S_i}$ with $\sigma(U_1) = U_2$, and $L_i$ the subextension of $K_{i,{S_i}}/K_i$ corresponding to $U_i$.
Then 
the local correspondence (resp. the good local correspondence) between $S_{1,f}(L_1)$ (resp. $T_1(L_1)$) and $S_{2,f}(L_2)$ (resp. $T_2(L_2)$) 
holds for $\sigma|_{U_1} : U_1\isom U_2$.
Indeed, 
the assertion follows from Theorem \ref{2.5}, \cite{Ivanov}, Proposition 2.4
and the fact that 
for $i=1,2$ and for $\bap_i \in S_{i,f}(K_{i,S_i})$, $D_{\bap_i, K_{i,{S_i}}/L_i} = D_{\bap_i, K_{i,{S_i}}/K_i} \cap U_i$.
\end{rem}

\begin{prop}\label{2.7}
For $i=1,2$, 
let $K_i$ be a number field, $S_i$ a set of primes of $K_i$ 
with $P_{K_i,\infty} \subset S_i$, 
$T_i \subset S_{i,f}$ 
and $\sigma :G_{K_1,S_1}\isom G_{K_2,S_2}$ 
an isomorphism.
Assume that 
$\# P_{T_i} \geq 2$ for $i=1,2$ 
and 
that 
the 
local correspondence between $T_1$ and $T_2$
holds for $\sigma$.
Then 
$[K_1 : \bQ]=[K_2:\bQ]$, 
$P_{T_1}=P_{T_2}$, 
$P_{T_1} \cap \cs(K_1/\bQ) = P_{T_2} \cap \cs(K_2/\bQ)$, 
and the good local correspondence between $P_{T_1}(K_1)$ 
and $P_{T_2}(K_2)$ 
holds for $\sigma$.
\end{prop}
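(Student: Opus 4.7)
The strategy is to combine the local correspondence hypothesis with group-theoretic reconstruction of local invariants at primes above $P_{T_i}$, and then a prime-counting argument.

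For each $p\in P_{T_i}$ and each $\bap\in T_i(K_{i,S_i})$ lying over some $\p\in P_{K_i,p}$, I would first verify that the decomposition group $D_{\bap}\subset G_{K_i,S_i}$ determines group-theoretically both the residue characteristic $p$ and the local degree $[K_{i,\p}:\bQ_p]$, along the lines of Lemma~\ref{2.1}. The hypothesis $p\in P_{T_i}$ forces $P_{K_i,p}\subset T_i\subset S_i$, so $\mu_{p^{\infty}}\subset K_{i,S_i}$ and global class field theory provides enough local abelian pro-$p$ extensions of $K_{i,\p}$ inside $K_{i,S_i,\bap}$ so that $D_{\bap}$ admits a $\bZ_p^2$-quotient and $D_{\bap}^{\ab,(p),/\tor}$ is a free $\bZ_p$-module of rank $[K_{i,\p}:\bQ_p]+1$. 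Since $D_{\bap}$ is a quotient of $G_{K_{i,\p}}$, it has no $\bZ_l^2$-quotient for $l\neq p$; hence $p$ is recovered as the unique prime $l$ with $D_{\bap}\twoheadrightarrow\bZ_l^2$, and the local degree as $\rk_{\bZ_p}D_{\bap}^{\ab,(p),/\tor}-1$.

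Transporting these invariants across $\sigma\colon D_{\bap}\isom D_{\sigma_{\ast}(\bap)}$ shows that, for $p\in P_{T_1}$ and $\p\in P_{K_1,p}$, the residue characteristic of $\sigma_{\ast,K_1}(\p)$ equals $p$ and the local degrees agree. Summing over $\p$ above $p$ gives
\[
[K_1:\bQ]=\sum_{\p\in P_{K_1,p}}[K_{1,\p}:\bQ_p]=\sum_{\q\in\sigma_{\ast,K_1}(P_{K_1,p})}[K_{2,\q}:\bQ_p]\leq[K_2:\bQ],
\]
and the symmetric argument using some $p'\in P_{T_2}$ (which exists by $\#P_{T_2}\geq 2$) gives the reverse inequality. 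Hence $[K_1:\bQ]=[K_2:\bQ]$, the inequality above is saturated, and $\sigma_{\ast,K_1}(P_{K_1,p})=P_{K_2,p}\subset T_2$, so $p\in P_{T_2}$. By symmetry $P_{T_1}=P_{T_2}$, and combining $\#P_{K_1,p}=\#P_{K_2,p}$ with $[K_1:\bQ]=[K_2:\bQ]$ yields that $p$ splits completely in $K_1/\bQ$ if and only if it does in $K_2/\bQ$, whence $P_{T_1}\cap\cs(K_1/\bQ)=P_{T_2}\cap\cs(K_2/\bQ)$.

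Finally, the restriction of $\sigma_{\ast,K_1}$ gives the required bijection between $P_{T_1}(K_1)=\bigsqcup_{p\in P_{T_1}}P_{K_1,p}$ and $P_{T_2}(K_2)$, and residue characteristics are preserved by the above. For the residual degrees, I would apply the tame-quotient formula $\#((D_{\bap}^{\tr,\ab})_{\tor}/(D_{\bap}^{\tr,\ab}[p^{\infty}]))+1=q$ from the proof of Lemma~\ref{2.1} to $D_{\bap}$ in place of $G_{K_{i,\p}}$, relying again on a class-field-theoretic argument to ensure that $D_{\bap}^{\tr,\ab}$ inherits the full prime-to-$p$ torsion $\mu_{q-1}$ of $G_{K_{i,\p}}^{\tr,\ab}$. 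The main obstacle throughout is this class-field-theoretic input: one must justify that, when $P_{K_i,p}\subset S_i$, the decomposition group $D_{\bap}$ captures all the relevant local abelian and tame abelian information of $G_{K_{i,\p}}$. The hypothesis that $S_i$ contains \emph{every} prime of $K_i$ above $p$ is precisely what permits the global realization of the requisite abelian and tame abelian extensions of $K_{i,\p}$ unramified outside $S_i$.
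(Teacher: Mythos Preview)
Your overall architecture (recover residue characteristic and local degree from $D_{\bap}$, sum local degrees, use symmetry) matches the paper's, but the crucial input you invoke is not the one the paper uses, and your substitute does not obviously work. The paper does \emph{not} argue by class field theory that $D_{\bap}$ sees enough of $G_{K_{i,\p}}$; instead, it first applies Proposition~\ref{2.4} (Chenevier--Clozel) to $\bQ$ with two primes $p,p'\in P_{T_i}$ to conclude that the decomposition groups at primes in $P_{T_i}(K_{i,S_i})$ are \emph{full}, i.e.\ $D_{\bap}\simeq G_{K_{i,\p}}$. Once fullness is known, Lemma~\ref{2.1} applies verbatim and yields the residue characteristic, the inequality of local degrees (hence equality after the symmetric argument), the residual degree, and the Frobenius lifts.

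The gap in your approach is the assertion that ``global class field theory provides enough local abelian pro-$p$ extensions'' so that $\rk_{\bZ_p}D_{\bap}^{\ab,(p),/\tor}=[K_{i,\p}:\bQ_p]+1$ and the tame torsion is $\mu_{q-1}$. Having $P_{K_i,p}\subset S_i$ readily gives the cyclotomic $\bZ_p$-extension of $K_{i,\p}$ inside $K_{i,S_i,\bap}$, but it does \emph{not} by itself produce the \emph{unramified} $\bZ_p$-extension of $K_{i,\p}$ there: that would require constructing, inside $K_{i,S_i}$, global extensions of $K_i$ with arbitrarily large $p$-power residue degree at $\p$, which is precisely the kind of realization statement Chenevier--Clozel supplies and which is not a formal consequence of class field theory. (Using the second prime $p'\in P_{T_i}$ via $\mu_{p'^{\infty}}$ only yields an unramified $\bZ_{p'}$-extension at $\p$, not a $\bZ_p$-extension.) The same issue recurs for your residual-degree step: controlling the prime-to-$p$ torsion in $D_{\bap}^{\tr,\ab}$ again amounts to controlling the residue field of $K_{i,S_i,\bap}$. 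So either you must justify these realization claims independently---which would essentially reprove a form of Proposition~\ref{2.4}---or, as the paper does, invoke Proposition~\ref{2.4} to obtain fullness and then apply Lemma~\ref{2.1}.
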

\begin{proof}
By Proposition \ref{2.4}, 
for $i=1,2$, 
the decomposition group in $G_{K_i,S_i}$ 
at any prime in $P_{T_i}(K_{i,S_i})$ is full. 
Take $l_1 \in P_{T_1}$ and $l_2 \in P_{T_2}$.
By
Lemma \ref{2.1}, 
$\sigma_{\ast,K_1}$ induces a bijection between 
$P_{K_1,l_1}$ and $T_2 \cap P_{K_2,l_1}$, 
and we have 
$[K_{1,\fp_1}:\bQ_{l_1}] \leq [K_{2,\sigma_{\ast,K_1}(\fp_1)}:\bQ_{l_1}]$
for $\fp_1 \in P_{K_1,l_1}$.
Therefore, 
\begin{equation*}
\begin{split} 
[K_1:\bQ] &= \sum_{\fp_1 \in P_{K_1,l_1}} [K_{1,\fp_1}:\bQ_{l_1}] \leq \sum_{\fp_1 \in P_{K_1,l_1}} [K_{2,\sigma_{\ast,K_1}(\fp_1)}:\bQ_{l_1}] = \sum_{\fp_2 \in T_2 \cap P_{K_2,l_1}} [K_{2,\fp_2}:\bQ_{l_1}] \\ &\leq \sum_{\fp_2 \in P_{K_2,l_1}} [K_{2,\fp_2}:\bQ_{l_1}] = [K_2:\bQ].
\end{split}
\end{equation*}
By the same argument 
obtained by exchanging the roles of $1$ and $2$, 
we have 
$[K_2:\bQ] \leq [K_1:\bQ]$.
Thus, 
we obtain 
$[K_1 : \bQ]=[K_2:\bQ]$, 
$P_{K_1,l_2} \subset T_1$ and $P_{K_2,l_1} \subset T_2$, 
so that 
$P_{T_1}=P_{T_2}$ 
and 
$\sigma_{\ast,K_1}$ induces a bijection between 
$P_{T_1}(K_1)$ and $P_{T_2}(K_2)$.
Hence by Lemma \ref{2.1}, 
the good local correspondence between $P_{T_1}(K_1)$ 
and $P_{T_2}(K_2)$ 
holds for $\sigma$.
Again by Lemma \ref{2.1}, 
$\sigma_{\ast,K_1}|_{P_{T_1}(K_1)} : P_{T_1}(K_1)\isom P_{T_2}(K_2)$
also preserves 
the local degrees 
of all primes in $P_{T_1}(K_1)$.
Therefore, we have $P_{T_1} \cap \cs(K_1/\bQ) = P_{T_2} \cap \cs(K_2/\bQ)$.
\end{proof}

\section{The existence of an isomorphism of fields}

In this section, we 
develop a way to
show the existence of an isomorphism between 
the number fields in question 
assuming that the good local correspondence holds.

Let $K$ be a number field, and
$l$ a prime number.
We set 
$
\Gamma_K = \Gamma_{K, l} \defeq G_{K}^{\ab,(l),/\tor}$, 
$r_l(K) \defeq \rank_{\bZ_l}\Gamma_K$, 
and write $K^{(\infty)}=K^{(\infty,l)}$ for the extension of $K$ corresponding to $\Gamma_K$.
We write $\fd_l(K)(\geq 0)$ for the Leopoldt defect.
Then, by \cite{NSW}, (10.3.20) Proposition, 
we have $r_l(K) = r_\bC(K)+1+\fd_l(K) \leq [K:\bQ]$, where $r_\bC(K)$ is the number of complex primes of $K$.
For $\p \in P_{K,f} \setminus P_{K,l}$, $\p$ is unramified in $K^{(\infty)}/K$.
Then 
we write $\Frob_{\p}$ for the Frobenius element in $\Gamma_K$ at $\p$.

We write 
$\Hom_{cts}(\Gamma_{K}, \bZ_l) (\simeq \Hom_{cts}(G_{K}, \bZ_l))$ for the set of continuous homomorphisms from $\Gamma_{K}$ to $\bZ_l$, 
where $\bZ_l$ is equipped with the profinite topology.
Then \begin{equation*}\Hom_{cts}(\Gamma_{K}, \bZ_l) = \Hom_{\bZ_l}(\Gamma_{K}, \bZ_l) \simeq \bZ_l^{r_l(K)}.\end{equation*}
Note that $\Hom_{cts}(\Gamma_{K}, \bZ_l)$ coincides with the continuous cochain cohomology of degree $1$ (cf. \cite{NSW}, Chapter \hspace{-.1em}I\hspace{-.1em}I, \S 6).

\begin{lem}\label{4f.1}
Let $K$ be a number field, $L$ a finite Galois extension of $K$, and $l$ a prime number.
Then the 
homomorphism: 
$\Hom_{cts}(\Gamma_{K}, \bZ_l) \to \Hom_{cts}(\Gamma_{L}, \bZ_l)^{G(L/K)}$ 
induced from the restriction map 
is injective, and 
the cokernel of this homomorphism
is finite.
In particular, $\rank_{\bZ_l} \Hom_{cts}(\Gamma_{L}, \bZ_l)^{G(L/K)} = r_l(K)$.
\end{lem}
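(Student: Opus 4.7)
The plan is to interpret $\Hom_{cts}(\Gamma_{K,l},\bZ_l)$ as the first continuous cohomology group $H^1(G_K,\bZ_l)$ and then apply the inflation–restriction sequence to the short exact sequence
$$1 \to G_L \to G_K \to G(L/K) \to 1$$
with trivial $\bZ_l$-coefficients. Since $\bZ_l$ is pro-$l$, torsion-free, and abelian, any continuous homomorphism $G_K \to \bZ_l$ factors through $\Gamma_{K,l}$, giving the identification $\Hom_{cts}(\Gamma_{K,l},\bZ_l) \simeq \Hom_{cts}(G_K,\bZ_l) = H^1(G_K,\bZ_l)$, and similarly for $L$; moreover $(\bZ_l)^{G_L} = \bZ_l$ since the action is trivial.

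The five-term exact sequence then reads
$$0 \to H^1(G(L/K),\bZ_l) \to H^1(G_K,\bZ_l) \xrightarrow{\res} H^1(G_L,\bZ_l)^{G(L/K)} \to H^2(G(L/K),\bZ_l).$$
First I would observe that $H^1(G(L/K),\bZ_l) = \Hom(G(L/K),\bZ_l) = 0$, because $G(L/K)$ is finite while $\bZ_l$ is torsion-free; this yields the injectivity of the restriction map. Next I would show that $H^2(G(L/K),\bZ_l)$ is finite: from the short exact sequence $0 \to \bZ_l \to \bQ_l \to \bQ_l/\bZ_l \to 0$ and the vanishing $H^i(G(L/K),\bQ_l)=0$ for $i\ge 1$ (since $G(L/K)$ is finite and $\bQ_l$ is uniquely divisible), one obtains an isomorphism $H^2(G(L/K),\bZ_l) \simeq H^1(G(L/K),\bQ_l/\bZ_l) = \Hom(G(L/K),\bQ_l/\bZ_l)$, which is a finite group. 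Consequently the cokernel of the restriction map injects into a finite group and is therefore finite.

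Finally, the ``in particular'' clause follows immediately: a $\bZ_l$-module homomorphism that is injective with finite cokernel preserves $\bZ_l$-rank, so
$$\rank_{\bZ_l}\Hom_{cts}(\Gamma_{L,l},\bZ_l)^{G(L/K)} = \rank_{\bZ_l}\Hom_{cts}(\Gamma_{K,l},\bZ_l) = r_l(K).$$
There is no real obstacle here; the only point requiring a small argument is the finiteness of $H^2(G(L/K),\bZ_l)$, handled by the divisibility trick above. Everything else is a direct application of inflation–restriction together with the elementary fact that $\Hom$ from a finite group to a torsion-free module vanishes.
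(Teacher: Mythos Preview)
Your proposal is correct and follows essentially the same route as the paper: both use the inflation--restriction (five-term) exact sequence for $1 \to G_L \to G_K \to G(L/K) \to 1$, deduce injectivity from $H^1(G(L/K),\bZ_l)=\Hom(G(L/K),\bZ_l)=0$, and bound the cokernel via $H^2(G(L/K),\bZ_l)\simeq H^1(G(L/K),\bQ_l/\bZ_l)$, which is finite. The only difference is that the paper first runs the argument with finite coefficients $\bZ/l^n\bZ$ (where the Hochschild--Serre sequence for profinite groups with discrete modules applies without caveat) and then passes to the projective limit, whereas you invoke the five-term sequence directly with $\bZ_l$-coefficients; the paper's detour avoids having to justify the Hochschild--Serre spectral sequence in continuous cochain cohomology with profinite coefficients, but the content is the same.
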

\begin{proof}
For $n \in \bZ_{>0}$, by \cite{NSW}, (1.6.7) Proposition, 
there exists an exact sequence
\begin{equation*}0\to H^1(G(L/K),\bZ/l^n\bZ) 
\to H^1(G_{K},\bZ/l^n\bZ) 
\to H^1(G_{L},\bZ/l^n\bZ)^{G(L/K)} 
\to H^2(G(L/K),\bZ/l^n\bZ) .\end{equation*}
Since $H^1(G(L/K),\bZ/l^n\bZ)$ is finite, by passing to the projective limit, 
we have an exact sequence
\begin{equation*}0\to \Hom_{cts}(G(L/K), \bZ_l) 
\to \Hom_{cts}(\Gamma_{K}, \bZ_l) 
\to \Hom_{cts}(\Gamma_{L}, \bZ_l)^{G(L/K)} 
\to 
H^2(G(L/K),\bZ_l).\end{equation*}
Since 
$\Hom_{cts}(G(L/K), \bZ_l)$ is trivial and $H^2(G(L/K),\bZ_l) \simeq H^1(G(L/K),\bQ_l/\bZ_l)$ is finite, we obtain the assertion.
\end{proof}

\begin{lem}\label{4f.2}
Let $K$ be a number field, $L$ a finite extension of $K$, and $l$ a prime number.
Assume that $L \neq K$ and $L$ has a complex prime. 
Then we have $r_l(K) < r_l(L)$.
\end{lem}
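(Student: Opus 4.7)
My plan is to use the structural formula $r_l(F) = r_{\bC}(F) + 1 + \fd_l(F)$ (cf.\ \cite{NSW}, (10.3.20) Proposition) for both $F = K, L$ and to reduce the desired strict inequality to the two estimates
$$r_{\bC}(L) - r_{\bC}(K) \geq 1 \quad \text{and} \quad \fd_l(L) \geq \fd_l(K),$$
whose sum gives $r_l(L) - r_l(K) \geq 1$.

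The first estimate is handled by a short case analysis on $r_{\bC}(K)$. Since $\bC$ has no nontrivial finite extension, every complex place $v$ of $K$ lies below exactly $[L:K]$ complex places of $L$. Thus if $r_{\bC}(K) \geq 1$, then $r_{\bC}(L) \geq [L:K]\, r_{\bC}(K) \geq 2\, r_{\bC}(K)$, giving $r_{\bC}(L) - r_{\bC}(K) \geq ([L:K] - 1)\, r_{\bC}(K) \geq 1$ (here one uses $L \neq K$ so $[L:K] \geq 2$). If instead $K$ is totally real, $r_{\bC}(K) = 0$, and the assumption that $L$ has a complex place gives $r_{\bC}(L) \geq 1$ directly.

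The inequality $\fd_l(L) \geq \fd_l(K)$ is the main obstacle, and my plan is to derive it by Galois descent. I would pass to a finite Galois extension $M/K$ containing $L$ (for instance, the Galois closure $\widetilde L$), so that $M/L$ is also Galois, and set $G \defeq G(M/K)$, $H \defeq G(M/L)$, with $H \subset G$. The $l$-adic Leopoldt map
$$\mathrm{Leop}_M \colon \mathcal O_M^{\ast} \otimes_{\bZ} \bQ_l \longrightarrow \prod_{w \mid l} M_w$$
is $G$-equivariant; since $|G|$ is invertible in $\bQ_l$, taking $G$-invariants commutes with $\otimes \bQ_l$, and the identifications $(\mathcal O_M^{\ast} \otimes \bQ_l)^G = \mathcal O_K^{\ast} \otimes \bQ_l$ together with $(\prod_{w \mid l} M_w)^G = \prod_{v \mid l} K_v$ (the latter by local Galois theory applied to each $G$-orbit of places of $M$ above $l$) show that the restriction of $\mathrm{Leop}_M$ to $G$-invariants is $\mathrm{Leop}_K$; the analogous statement holds for $H$ and $L$. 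Writing $N$ for the kernel of $\mathrm{Leop}_M$, one then has $\fd_l(K) = \dim_{\bQ_l} N^G$ and $\fd_l(L) = \dim_{\bQ_l} N^H$, and the containment $N^G \subset N^H$ (immediate from $H \subset G$) yields $\fd_l(K) \leq \fd_l(L)$.

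Combining both estimates gives $r_l(L) - r_l(K) \geq 1 > 0$, as desired.
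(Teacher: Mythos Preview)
Your proof is correct and follows essentially the same approach as the paper: both reduce to the formula $r_l = r_{\bC} + 1 + \fd_l$ and establish the two inequalities $r_{\bC}(K) < r_{\bC}(L)$ and $\fd_l(K) \leq \fd_l(L)$. The only difference is that the paper simply cites \cite{NSW}, (10.3.7) Corollary and the proof of (10.3.11) Corollary for the Leopoldt defect inequality, whereas you supply a direct Galois-descent argument via a common Galois extension $M/K$; your argument is a clean and self-contained substitute for those references.
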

\begin{proof}
By the assumption, we have 
$r_\bC(K) < r_\bC(L)$.
Further, by \cite{NSW}, (10.3.7) Corollary and the proof of \cite{NSW}, (10.3.11) Corollary, we have 
$\fd_l(K) \leq \fd_l(L)$.
\end{proof}

In the rest of this 
paper, 
fix an algebraic closure $\overline{\bQ}$ of $\bQ$, 
and 
suppose that 
all number fields and all algebraic extensions of them 
are subfields of $\overline{\bQ}$.

Let $L$ be a finite extension of $K$.
We have the canonical homomorphisms $\pi_{L/K} = \pi_{L/K, l}:\Gamma_{L} \to \Gamma_{K}$ and $res_L^{K} = res_{L,l}^{K} : \Hom_{cts}(\Gamma_{K}, \bZ_l) \to \Hom_{cts}(\Gamma_{L}, \bZ_l)$.

\begin{prop}\label{4f.3}
For $i=1,2$, 
let $K_i$ be a number field, $S_i$ a set of primes of $K_i$ with $P_{K_i,\infty} \subset S_i$, 
$T_i \subset S_{i,f}$ and $\sigma :G_{K_1,S_1}\isom G_{K_2,S_2}$ an isomorphism.
Assume that the following conditions hold:
\begin{itemize}

\item[(a)]
The good local correspondence between $T_1$ and $T_2$
holds for $\sigma$.

\item[(b)]
There exists a finite extension $L/K_1K_2$ such that 
$L/\bQ$ is Galois and 
$\delta(T_i(L)) \neq 0$ for one $i$.

\end{itemize}
Then, 
for any prime number $l \in P_{S_1,f} \cap P_{S_2,f}$, 
there exists $\tau \in G(L/\bQ)$ such that 
\begin{equation*}\overline{\sigma} \circ \pi_{L/K_1, l} = (\tau|_{K_2})^\ast \circ \pi_{L/\tau(K_2), l},\end{equation*}
where $\overline{\sigma}:\Gamma_{K_1, l} \isom \Gamma_{K_2, l}$ is the isomorphism induced by $\sigma$ 
and 
$(\tau|_{K_2})^\ast:\Gamma_{\tau(K_2), l} \isom \Gamma_{K_2, l}$ is the isomorphism induced by $\tau|_{K_2}$.
\end{prop}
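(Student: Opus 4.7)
The plan is to show the equality of two continuous homomorphisms $\phi_1 \defeq \overline{\sigma}\circ \pi_{L/K_1,l}$ and $\phi_2 \defeq (\tau|_{K_2})^\ast \circ \pi_{L/\tau(K_2),l}$ from $\Gamma_{L,l}$ to $\Gamma_{K_2,l}$ for an appropriately chosen $\tau$, by testing on Frobenius elements at a set of primes of $L$ of positive upper density. As a preliminary step, I would invoke Proposition \ref{2.7} to replace $T_i$ by $P_{T_i}(K_i)$, so that (after writing $P \defeq P_{T_1} = P_{T_2}$) one has $T_1(L)=T_2(L)=P_P(L)$ and condition (b) supplies $\delta(P_P(L)) \neq 0$; this way the condition ``$\fp|_{K_i}\in T_i$'' reduces to ``$\fp|_{\bQ}\in P$'' for both $i$ simultaneously.

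For each prime $\fp \in P_P(L)$ lying over some rational prime $p \neq l$ which is unramified in $L/\bQ$ (only finitely many are excluded), both $\fp|_{K_1}\in T_1$ and $\fp|_{K_2}\in T_2$, and $\fp_2' \defeq \sigma_{\ast,K_1}(\fp|_{K_1})$ is a well-defined prime of $K_2$ above $p$. Since $L/\bQ$ is Galois, the primes of $L$ above $p$ form a single $G(L/\bQ)$-orbit, so I can pick $\tau_\fp \in G(L/\bQ)$ with $\tau_\fp^{-1}(\fp)|_{K_2} = \fp_2'$. As $G(L/\bQ)$ is finite and $\delta_{\sup}$ is subadditive, pigeonhole yields a single $\tau \in G(L/\bQ)$ for which the set $T_\tau \defeq \{\fp : \tau_\fp=\tau\}$ has $\delta_{\sup}(T_\tau) > 0$. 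Applying the Chebotarev density theorem for infinite extensions (as used in Proposition \ref{1.20}) to $L^{(\infty,l)}/L$, the Frobenius elements $\{\Frob_\fp\}_{\fp\in T_\tau}$ generate an open subgroup $U \subset \Gamma_{L,l}$.

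I then compute both sides on $\Frob_\fp$ for $\fp \in T_\tau$. Using that $\overline{\sigma}$ maps Frobenius to Frobenius in the abelian quotient (a consequence of the good local correspondence, via Lemma \ref{2.1.5}: the Frobenius in $D_{\fp|_{K_1}}/I_{\fp|_{K_1}}$ is characterized by residue characteristic and residual degree, both preserved by $\sigma$), one obtains
$$
\phi_1(\Frob_\fp) = \overline{\sigma}(\Frob_{\fp|_{K_1}})^{f_{\fp,L/K_1}} = \Frob_{\fp_2'}^{f_{\fp,L/K_1}}.
$$
On the other hand, unwinding the convention for $(\tau|_{K_2})^\ast$ gives
$$
\phi_2(\Frob_\fp) = \Frob_{\tau^{-1}(\fp)|_{K_2}}^{f_{\fp,L/\tau(K_2)}} = \Frob_{\fp_2'}^{f_{\tau^{-1}(\fp),L/K_2}}
$$
by choice of $\tau$. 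The two exponents agree: writing both as $f_{\fp,L/\bQ}/f_{\fp|_{K_1},K_1/\bQ}$ and $f_{\tau^{-1}(\fp),L/\bQ}/f_{\fp_2',K_2/\bQ}$, the numerators agree because $L/\bQ$ is Galois, while the denominators agree by preservation of residual degrees in the good local correspondence. Hence $\phi_1$ and $\phi_2$ coincide on the generating set of $U$, so on $U$, and thus $\phi_1-\phi_2$ factors through the finite quotient $\Gamma_{L,l}/U$; since $\Gamma_{K_2,l}$ is $\bZ_l$-torsion free, this forces $\phi_1 = \phi_2$ on all of $\Gamma_{L,l}$.

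The main obstacles are two: first, justifying the reduction to the case $T_i = P_{T_i}(K_i)$ in a way that is compatible with $\delta(T_i(L))\neq 0$, which requires verifying the hypothesis $\# P_{T_i}\geq 2$ of Proposition \ref{2.7} (or, failing this, directly locating a density-positive subset of $P_L$ whose restrictions to $K_1$ and $K_2$ both land in $T_1$ and $T_2$); second, the delicate point of tracking Frobenius through the abelianization $\overline{\sigma}$, namely confirming that the map agrees with the \emph{Frobenius} rather than some twist, which is where Lemma \ref{2.1.5} and the good (as opposed to just local) correspondence are essential.
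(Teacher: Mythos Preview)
Your argument is correct and complete, but it takes a genuinely different route from the paper's. The paper argues by contradiction: for each $\tau\in G(L/\bQ)$ it forms the ``difference'' map $\phi_\tau:\Gamma_{L,l}\to\Gamma_{K_2,l}$, and if no $\tau$ worked then every $\Ker(\phi_\tau)$ would be a proper $\bZ_l$-submodule, hence of Haar measure zero, so their (finite) union would also have measure zero; a single application of Chebotarev for infinite extensions then produces a prime $\fp\in T_1(L)\cap\cs(L/\bQ)(L)$ with $\Frob_\fp$ outside this union, and the good local correspondence forces $\Frob_\fp\in\Ker(\phi_{\tau'})$ for some $\tau'$, a contradiction. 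Your proof is instead constructive: pigeonhole on the finitely many $\tau$ singles out one for which the Frobenii agree on a positive-density set, Chebotarev upgrades this to agreement on an open subgroup, and torsion-freeness of $\Gamma_{K_2,l}$ finishes. The paper's route avoids your pigeonhole step and the exponent bookkeeping by restricting at the outset to $\cs(L/\bQ)(L)$ (which does not change $\delta_{\sup}$), so that all residual-degree exponents become $1$; your route is more elementary in that it never invokes the Haar-measure-zero argument on the target.

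Your first self-identified obstacle is a non-issue: the reduction via Proposition~\ref{2.7} is unnecessary. You never actually use $\fp|_{K_2}\in T_2$; you only need $\fp|_{K_1}\in T_1$ (to form $\fp_2'=\sigma_{\ast,K_1}(\fp|_{K_1})$) and that $\fp_2'$ lies over the same rational prime $p$ (which is exactly preservation of residue characteristic in the good local correspondence). So simply assume by symmetry that $\delta_{\sup}(T_1(L))>0$ and run your argument over $\fp\in T_1(L)$ with $\fp|_\bQ\neq l$ unramified in $L$; the problematic hypothesis $\#P_{T_i}\ge 2$ never enters. Your second obstacle is the genuine one, and your resolution via Lemma~\ref{2.1.5} is correct; note only that the applicability of Lemma~\ref{2.1.5} to the decomposition groups $D_{\fp|_{K_1}}\subset G_{K_1,S_1}$ and $D_{\fp_2'}\subset G_{K_2,S_2}$ uses $l\in P_{S_1,f}\cap P_{S_2,f}$ (so that $\mu_{l^\infty}\subset K_{i,S_i}$), exactly as the paper invokes via \cite{Ivanov}, Proposition~2.3.
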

\begin{proof}
Take $l \in P_{S_1,f} \cap P_{S_2,f}$.
We have the following diagram:
\begin{equation*}
\xymatrix{
& & L^{(\infty)} \ar@{-}[ld] \ar@{-}[ddd]^{\Gamma_{L}} \ar@{-}[rd] & & \\
& K_1^{(\infty)}L \ar@{-}[ld] \ar@{-}[rdd] & & LK_2^{(\infty)} \ar@{-}[ldd] \ar@{-}[rd] & \\
K_1^{(\infty)} \ar@{-}[rdd]^{} \ar@/_15pt/@{-}[rddd]_{\Gamma_{K_1}} & & & & K_2^{(\infty)} \ar@{-}[ldd]_{} \ar@/^15pt/@{-}[lddd]^{\Gamma_{K_2}} \\
& & L \ar@{-}[ld] \ar@{-}[rd] & & \\
& K_1^{(\infty)} \cap L \ar@{-}[d] & & L \cap K_2^{(\infty)} \ar@{-}[d] & \\
& K_1 & & K_2 & \\
}
\end{equation*}
For each $\tau \in G(L/\bQ)$, 
let $\tau^\ast \in \Aut(\Gamma_{L})$ be 
the automorphism of $\Gamma_{L}$ 
defined by the 
outer 
action of $\tau$.
We set 
$\phi_\tau 
:\Gamma_{L} \to \Gamma_{K_2}$, 
$\gamma \mapsto (\overline{\sigma} \circ \pi_{L/K_1})(\gamma) \cdot (\pi_{L/K_2} \circ \tau^\ast)(\gamma)^{-1}$, 
then this is a homomorphism of free $\bZ_l$-modules.

Assume that for any $\tau \in G(L/\bQ)$, 
$\overline{\sigma} \circ \pi_{L/K_1} \neq \pi_{L/K_2} \circ \tau^\ast$.
Then 
$\rank_{\bZ_l}\Ker(\phi_\tau) < r_l(L)$, so that 
$\Ker(\phi_\tau)$ has Haar measure $0$ in $\Gamma_{L}$.
Hence 
$\bigcup_{\tau \in G(L/\bQ)} \Ker(\phi_\tau)$ also has Haar measure $0$.
Now, by symmetry, we may assume that 
$\delta_{\sup}(T_1(L) \cap \cs(L/\bQ)(L)) = \delta_{\sup}(T_1(L)) > 0$.
Then, 
by the Chebotarev density theorem for infinite extensions 
(\cite{Serre}, Chapter I, 2.2, COROLLARY 2), 
there exists 
$\p \in (T_1 \setminus P_{K_1,l})(L) \cap \cs(L/\bQ)(L)$ 
such that 
$\Frob_\p$ in $\Gamma_{L}$ 
satisfies 
$\Frob_\p \notin \bigcup_{\tau \in G(L/\bQ)} \Ker(\phi_\tau)$.
For $i=1,2$, 
we set 
$\p_i \defeq \p|_{K_i}$, 
then 
$\pi_{L/K_i}(\Frob_{\p})=\Frob_{\p_i}$.
When 
$\p$ is above a prime number $p$, 
$\p_1$ and $\p_2$ are also above $p$.
Since the good local correspondence between $T_1$ and $T_2$
holds for $\sigma$, 
$\sigma_{\ast,K_1}(\p_1)$ is also above $p$.
Therefore, there exists $\tau' \in G(L/\bQ)$ such that 
$\sigma_{\ast,K_1}(\p_1)=(\tau' \cdot \p)|_{K_2}$.
Thus, we have $\pi_{L/K_2} \circ \tau'^\ast (\Frob_{\p}) = \Frob_{\sigma_{\ast,K_1}(\p_1)}$.
By the proof of \cite{Ivanov}, Lemma 2.3, 
$D_{\p_1} \subset G_{K_1,S_1}$ 
and $D_{\sigma_{\ast,K_1}(\p_1)} \subset G_{K_2,S_2}$ 
satisfy the conditions in Lemma \ref{2.1.5} for $p$ and $l$ above.
Hence, by Lemma \ref{2.1.5} and the good local correspondence between $T_1$ and $T_2$ 
for $\sigma$, 
we obtain $\overline{\sigma}(\Frob_{\p_1})=\Frob_{\sigma_{\ast,K_1}(\p_1)}$. 
Thus, we obtain 
\begin{equation*}
\begin{split}
\overline{\sigma} \circ \pi_{L/K_1}(\Frob_\p)
=\Frob_{\sigma_{\ast,K_1}(\p_1)}
=\pi_{L/K_2} \circ \tau'^\ast (\Frob_{\p}).
\end{split}
\end{equation*}
Namely, 
$\Frob_\p \in \Ker(\phi_{\tau'})$. 
However, this contradicts the fact that 
$\Frob_\p \notin \bigcup_{\tau \in G(L/\bQ)} \Ker(\phi_\tau)$.

Therefore, 
there exists $\tau \in G(L/\bQ)$ such that 
$\overline{\sigma} \circ \pi_{L/K_1} = \pi_{L/K_2} \circ \tau^\ast = {(\tau|_{K_2})^\ast} \circ \pi_{L/\tau(K_2)}$.
\end{proof}

\begin{lem}\label{4f.4}
For $i=1,2$, let $K_i$ be a number field, and $l$ a prime number.
Assume that 
there exist a finite extension $L/K_1K_2$ 
and an isomorphism $\overline{\sigma}: \Gamma_{K_1, l} \isom \Gamma_{K_2, l}$
such that 
$\overline{\sigma} \circ \pi_{L/K_1, l} = \pi_{L/K_2, l}$.
Then 
$K_1^{(\infty,l)}K_2 = K_1K_2^{(\infty,l)}$.
\end{lem}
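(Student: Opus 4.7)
The plan is to descend the hypothesis from $L$ to $F \defeq K_1 K_2$ via the factorization $\pi_{L/K_i, l} = \pi_{F/K_i, l} \circ \pi_{L/F, l}$, and then close the argument with a saturation trick on closed subgroups of $\Gamma_{F, l}$.

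First I would observe that, since $\overline{\sigma}$ is an isomorphism, the hypothesis $\overline{\sigma} \circ \pi_{L/K_1, l} = \pi_{L/K_2, l}$ immediately gives $\Ker(\pi_{L/K_1, l}) = \Ker(\pi_{L/K_2, l})$ as closed subgroups of $\Gamma_{L, l}$. Setting $J_i \defeq \Ker(\pi_{F/K_i, l})$ and $I \defeq \Image(\pi_{L/F, l})$ as closed subgroups of $\Gamma_{F, l}$, the factorization of $\pi_{L/K_i, l}$ rewrites this kernel equality as $\pi_{L/F, l}^{-1}(J_1) = \pi_{L/F, l}^{-1}(J_2)$, which is in turn equivalent to $J_1 \cap I = J_2 \cap I$.

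Next I would pin down the relevant features of $I$ and the $J_i$. By Galois theory, $\Image(\pi_{L/F, l})$ corresponds to the finite extension $F^{(\infty, l)} \cap L \supset F$, so
$$[\Gamma_{F, l} : I] \;=\; [F^{(\infty, l)} \cap L : F] \;\leq\; [L : F] \;<\; \infty.$$
For each $J_i$, the quotient $\Gamma_{F, l}/J_i$ injects into the torsion-free group $\Gamma_{K_i, l}$; equivalently, $J_i$ is saturated in the free $\bZ_l$-module $\Gamma_{F, l}$, in the sense that $m \cdot x \in J_i$ for some $m \in \bZ_{>0}$ forces $x \in J_i$.

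Finally I would combine these observations: for $x \in J_1$, set $n \defeq [\Gamma_{F, l} : I]$, so that $n x \in I$; then $n x \in I \cap J_1 = I \cap J_2 \subset J_2$, and saturation of $J_2$ gives $x \in J_2$. Symmetry yields $J_1 = J_2$, which unraveled is precisely $K_1^{(\infty, l)} F = K_2^{(\infty, l)} F$, i.e. the desired equality $K_1^{(\infty, l)} K_2 = K_1 K_2^{(\infty, l)}$. The main conceptual obstacle is that the hypothesis only directly controls kernels pulled back along $\pi_{L/F, l}$, and this map is typically not surjective; the saturation step, which uses torsion-freeness of the $\Gamma_{K_i, l}$ in an essential way, is exactly what is needed to pass from intersections with the finite-index image $I$ back to the kernels $J_i$ themselves.
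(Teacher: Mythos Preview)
Your proof is correct and uses essentially the same mechanism as the paper: both arguments exploit that the relevant maps agree on a finite-index subgroup and that the targets are torsion-free $\bZ_l$-modules, which forces agreement everywhere. The packaging differs slightly: the paper works in the smaller group $\Gamma \defeq G(K_1^{(\infty,l)}K_2^{(\infty,l)}/K_1K_2)$, where $(\pi_1,\pi_2):\Gamma \to \Gamma_{K_1}\times\Gamma_{K_2}$ is injective by construction, and then deduces $\overline{\sigma}\circ\pi_1=\pi_2$ on all of $\Gamma$ (yielding the marginally stronger statement $K_1^{(\infty,l)}K_2 = K_1^{(\infty,l)}K_2^{(\infty,l)} = K_1K_2^{(\infty,l)}$), whereas you stay in $\Gamma_{F,l}$ and phrase the same torsion-freeness step as saturation of the kernels $J_i$.
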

\begin{proof}
We set 
$\Gamma \defeq G(K_1^{(\infty,l)}K_2^{(\infty,l)}/K_1K_2)$. 
Then we have the following diagram:
\begin{equation*}
\xymatrix{
& & K_1^{(\infty)}K_2^{(\infty)} \ar@{-}[ld] \ar@{-}[ddd]^{\Gamma} \ar@{-}[rd] & & \\
& K_1^{(\infty)}K_2 \ar@{-}[ld] \ar@{-}[rdd] & & K_1K_2^{(\infty)} \ar@{-}[ldd] \ar@{-}[rd] & \\
K_1^{(\infty)} \ar@{-}[rdd]^{} \ar@/_15pt/@{-}[rddd]_{\Gamma_{K_1}} & & & & K_2^{(\infty)} \ar@{-}[ldd]_{} \ar@/^15pt/@{-}[lddd]^{\Gamma_{K_2}} \\
& & K_1K_2 \ar@{-}[ld] \ar@{-}[rd] & & \\
& K_1^{(\infty)} \cap K_1K_2 \ar@{-}[d] & & K_1K_2 \cap K_2^{(\infty)} \ar@{-}[d] & \\
& K_1 & & K_2 & \\
}
\end{equation*}
We write $\pi_1$ for 
the composite of $\Gamma \twoheadrightarrow G(K_1^{(\infty)}K_2/K_1K_2) \overset{\text{restriction}}\isom G(K_1^{(\infty)}/K_1^{(\infty)} \cap K_1K_2) \hookrightarrow \Gamma_{K_1}$,
and define 
$\pi_2 : \Gamma \to \Gamma_{K_2}$ similarly.
Then 
$(\pi_1 , \pi_2) : \Gamma \to \Gamma_{K_1} \times \Gamma_{K_2}$ is injective.
We write $\pi$ for 
the canonical homomorphism: $\Gamma_{L} \to \Gamma$.
Then 
$\pi_{L/K_i} = \pi_i \circ \pi$ for $i=1,2$.
Since $\Image(\pi)$ is open in $\Gamma$, 
we obtain 
$\overline{\sigma} \circ \pi_1 = \pi_2$, 
so that 
$\Ker(\pi_2)=\Ker(\overline{\sigma} \circ \pi_1)=\Ker(\pi_1)$.
As 
$(\pi_1 , \pi_2)$ is injective, 
we have $\Ker(\pi_2)=\Ker(\pi_1)=1$.
Thus, we obtain $K_1^{(\infty)}K_2 = K_1^{(\infty)}K_2^{(\infty)} = K_1K_2^{(\infty)}$.
\end{proof}

\begin{prop}\label{4f.5}
For $i=1,2$, let $K_i$ be a number field.
Assume that the following conditions hold:
\begin{itemize}
\item[(a)]
$K_i$ has a complex prime 
for one $i$. 

\item[(b)]
There exists a finite extension $L/K_1K_2$ such that 
$L/\bQ$ is Galois and 
$K_1^{(\infty,l)}L = LK_2^{(\infty,l)}$ for a prime number $l$.

\end{itemize}
Then 
$K_1 = K_2$.
\end{prop}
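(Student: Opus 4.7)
The plan is to use condition (b) to factor both restriction maps $\pi_{L/K_i}:\Gamma_L\to\Gamma_{K_i}$ through a common quotient, then apply Lemma \ref{4f.1} twice — to each $K_i$ and to $M:=K_1\cap K_2$ — to force the rank equality $r_l(M)=r_l(K_1)=r_l(K_2)$, and finally conclude via two applications of Lemma \ref{4f.2}. Concretely, first I would set $N:=K_1^{(\infty,l)}L=LK_2^{(\infty,l)}$ (this equality being exactly (b)) and $Q:=G(N/L)$; the canonical isomorphism $G(N/L)\isom G(K_i^{(\infty,l)}/K_i^{(\infty,l)}\cap L)$ realises $Q$ as an open subgroup of $\Gamma_{K_i}$ for each $i$, so $\pi_{L/K_i}$ factors as $\Gamma_L\twoheadrightarrow Q\hookrightarrow \Gamma_{K_i}$ and in particular $r_l(K_1)=r_l(K_2)=:r$.

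Next, letting $V_i:=\Image(\res_L^{K_i})\subset \Hom_{cts}(\Gamma_L,\bZ_l)$, Lemma \ref{4f.1} makes $V_i$ free of rank $r$ over $\bZ_l$. Because $V_i$ factors through $Q$, both $V_1$ and $V_2$ sit as finite-index subgroups of the rank-$r$ module $\Hom_{cts}(Q,\bZ_l)$, so $V_1\cap V_2$ is still of rank $r$. The commutativity of $\Gamma_{K_i}$ makes $\pi_{L/K_i}$ $G(L/K_i)$-equivariant with trivial target action, whence $V_i\subset \Hom_{cts}(\Gamma_L,\bZ_l)^{G(L/K_i)}$; consequently $V_1\cap V_2$ is fixed by $\langle G(L/K_1),G(L/K_2)\rangle=G(L/M)$ (Galois correspondence, using that $L/M$ is Galois). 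A second use of Lemma \ref{4f.1}, this time for $M$, then yields $r\le r_l(M)$; combined with the monotonicity of $r_\bC$ and $\fd_l$ built into the proof of Lemma \ref{4f.2} one has $r_l(M)\le r_l(K_1)=r$, so altogether $r_l(M)=r_l(K_1)=r_l(K_2)$.

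To finish, by (a) some $K_i$ — say $K_1$ (otherwise swap roles) — has a complex prime. Lemma \ref{4f.2} applied to $M\subset K_1$ then forbids a strict inclusion, forcing $M=K_1$, i.e., $K_1\subset K_2$; and a second application to $K_1\subset K_2$, which now inherits a complex prime, likewise forces $K_1=K_2$. The main subtlety I anticipate is the $G(L/K_i)$-invariance of $V_i$, which drives the key containment $V_1\cap V_2\subset \Hom_{cts}(\Gamma_L,\bZ_l)^{G(L/M)}$; this rests on abelianness of $\Gamma_{K_i}$ together with the fact that $L^{(\infty,l)}/K_i$ is Galois, so that any lift of $\sigma\in G(L/K_i)$ to $G(L^{(\infty,l)}/K_i)$ preserves $K_i^{(\infty,l)}$ and acts on $\Gamma_{K_i}$ by inner automorphisms.
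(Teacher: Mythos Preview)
Your proposal is correct and follows essentially the same route as the paper's proof: both identify the common quotient $Q=G(K_1^{(\infty,l)}L/L)$ (the paper writes it as $\Gamma_L/N$ with $N=\Ker\pi_{L/K_i,l}$), use it to see that $V_1\cap V_2$ has rank $r_l(K_1)=r_l(K_2)$, place $V_1\cap V_2$ inside $\Hom_{cts}(\Gamma_L,\bZ_l)^{G(L/K_1\cap K_2)}$ via Lemma~\ref{4f.1}, and then finish with two applications of Lemma~\ref{4f.2}. The only cosmetic difference is that the paper cites Lemma~\ref{4f.1} for the containment $V_i\subset\Hom_{cts}(\Gamma_L,\bZ_l)^{G(L/K_i)}$, whereas you argue it directly from abelianness of $\Gamma_{K_i}$; also, your detour through $r_l(M)\le r_l(K_1)$ is not needed, since $r_l(K_1)\le r_l(M)$ already suffices to trigger the contrapositive of Lemma~\ref{4f.2}.
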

\begin{proof}
Take a prime number $l$ for which $K_1^{(\infty,l)}L = LK_2^{(\infty,l)}$.
Then 
$\Ker(\pi_{L/K_1, l}) = \Ker(\pi_{L/K_2, l})$, 
for which we write $N$.
For $i=1,2$, 
$res_L^{K_i}$ factors through $\Hom_{cts}(\Gamma_{L}/N, \bZ_l) (\subset \Hom_{cts}(\Gamma_{L}, \bZ_l))$
and $\rank_{\bZ_l}\Hom_{cts}(\Gamma_{L}/N, \bZ_l) = r_l(K_i)$.
Hence 
$\Image (res_L^{K_1}) \cap \Image (res_L^{K_2}) \subset \Hom_{cts}(\Gamma_{L}/N, \bZ_l)$ 
and 
$\rank_{\bZ_l}(\Image (res_L^{K_1}) \cap \Image (res_L^{K_2})) = r_l(K_1) = r_l(K_2)$.
Therefore, by Lemma \ref{4f.1}, we have 
\begin{equation*}\Image (res_L^{K_1}) \cap \Image (res_L^{K_2}) 
\subset \Hom_{cts}(\Gamma_{L}, \bZ_l)^{G(L/K_1)} \cap \Hom_{cts}(\Gamma_{L}, \bZ_l)^{G(L/K_2)} = \Hom_{cts}(\Gamma_{L}, \bZ_l)^{G(L/K_1 \cap K_2)}\end{equation*}
and 
$r_l(K_1) 
= r_l(K_2) 
= r_l(K_1 \cap K_2)$.
Now, by symmetry, we may assume that $K_1$ has a complex prime.
Then, by Lemma \ref{4f.2}, we have $K_1 = K_1 \cap K_2$, so that $K_1 \subset K_2$.
Hence $K_2$ also has a complex prime.
Again by Lemma \ref{4f.2}, we have $K_2 = K_1$.
\end{proof}

\begin{prop}\label{4f.6}
Let $S$ and $T$ be subsets of $P_{\bQ,f}$ such that $\delta(S) \neq 0$ 
and $T$ is infinite.
Then there exists a finite subextension $L$ of $\bQ(\mu_{\ltilde} ; l \in T)/\bQ$ such that 
$L$ is totally imaginary, 
$[L:\bQ]$ is a power of $2$, and 
$\delta(S \cap \cs(L/\bQ)) \neq 0$.
\end{prop}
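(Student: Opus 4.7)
The plan is to argue by contradiction, exploiting the infinitude of $T$ via a combinatorial estimate on the Galois group of the maximal pro-$2$ subextension of $\bQ(\mu_{\tilde l};\,l\in T)/\bQ$, together with the Chebotarev density theorem for finite abelian extensions.

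First I set up the framework. Put $F:=\bQ(\mu_{\tilde l};\,l\in T)$ and let $F^{(2)}$ denote its maximal pro-$2$ subextension. Since $\bQ(\mu_{\tilde l})/\bQ$ is abelian and the conductors $\tilde l$ are pairwise coprime, $F^{(2)}=\prod_{l\in T}M_l$, where $M_l\subset\bQ(\mu_{\tilde l})$ is the maximal $2$-subextension. Each $M_l$ is totally imaginary: complex conjugation corresponds to $-1\in(\bZ/\tilde l\bZ)^{\times}$, which has order $2$ and thus projects nontrivially to the $2$-Sylow quotient $\Gal(M_l/\bQ)$. Hence $G:=\Gal(F^{(2)}/\bQ)\simeq\prod_{l\in T}C_l$ with each $C_l$ cyclic of order $2^{k_l}$ ($k_l\geq1$), and complex conjugation is the element $c=(c_l)_l$ with $c_l$ the unique element of order $2$ in $C_l$. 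A finite subextension $L$ of $F/\bQ$ of $2$-power degree is exactly a finite subextension of $F^{(2)}/\bQ$, corresponding to an open subgroup $H\leq G$; such $L$ is totally imaginary iff $c\notin H$. Every open $H$ takes the form $H_{T'}\times\prod_{l\in T\setminus T'}C_l$ for some finite $T'\subset T$ and subgroup $H_{T'}\leq G_{T'}:=\prod_{l\in T'}C_l$, and $c\notin H$ iff $c|_{T'}\notin H_{T'}$.

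Suppose for contradiction that no such $L$ exists, i.e., for every finite $T'\subset T$ and every $H_{T'}\leq G_{T'}$ with $c|_{T'}\notin H_{T'}$, we have $\delta_{\sup}(\{p\in S:\Frob_p|_{F_{T'}^{(2)}}\in H_{T'}\})=0$. Define $R_{T'}:=\{g\in G_{T'}:c|_{T'}\in\langle g\rangle\}$. Then $G_{T'}\setminus R_{T'}$ is the (finite) union over all subgroups $H_{T'}\leq G_{T'}$ not containing $c|_{T'}$, because $g$ lies in some such $H_{T'}$ iff $\langle g\rangle$ itself does not contain $c|_{T'}$. By the finite subadditivity of $\delta_{\sup}$ and the contradiction hypothesis, $\delta_{\sup}(\{p\in S:\Frob_p|_{F_{T'}^{(2)}}\notin R_{T'}\})=0$, hence
\[
\delta_{\sup}(S)\leq\delta_{\sup}(\{p\in S:\Frob_p|_{F_{T'}^{(2)}}\in R_{T'}\})\leq|R_{T'}|/|G_{T'}|,
\]
where the last inequality follows from the Chebotarev density theorem applied to the finite abelian extension $F_{T'}^{(2)}/\bQ$.

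The main obstacle is the explicit verification that $|R_{T'}|/|G_{T'}|\to 0$ as $|T'|\to\infty$. In a product of cyclic $2$-groups, $c|_{T'}\in\langle g\rangle$ means that some common $n\in\bZ$ satisfies $ng_l=c_l$ in every $C_l$; a short $2$-adic-valuation argument shows this forces every $g_l\neq 0$ and all the $g_l$ to share the same order. Since the number of elements of order exactly $2^e$ in $C_l$ is $\phi(2^e)=2^{e-1}$, setting $m_{T'}:=\min_{l\in T'}k_l$ yields
\[
|R_{T'}|=\sum_{e=1}^{m_{T'}}(2^{e-1})^{|T'|},\qquad \frac{|R_{T'}|}{|G_{T'}|}\leq m_{T'}\cdot 2^{-|T'|}.
\]
Since $T$ is infinite and $m_{T'}$ is nonincreasing in $T'$ (hence bounded above by any fixed $k_{l_0}$), letting $|T'|\to\infty$ forces the right-hand side to $0$, contradicting $\delta(S)\neq 0$.
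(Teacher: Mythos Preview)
Your proof is correct and follows essentially the same route as the paper's: both pass to the product $G_{T'}=\prod_{l\in T'}C_l$ of cyclic $2$-groups, identify the ``bad'' set of elements $g$ with $c\in\langle g\rangle$ (your $R_{T'}$ is the paper's $Y_m$ up to the identity element), bound its proportion by a quantity of the shape $(\text{const})\cdot 2^{-|T'|}\to 0$, and conclude via Chebotarev. The only cosmetic difference is that the paper argues directly---choosing a specific Frobenius $x'\notin Y_{m'}$ and setting $L$ equal to the fixed field of $\langle x'\rangle$---whereas you phrase it as a contradiction, but the underlying combinatorics and the density comparison are identical.
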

\begin{proof}
For $l \in T$, we write $\bQ(\alpha_l)$ for the maximal subextension of $\bQ(\mu_{\ltilde})/\bQ$ of $2$-power degree. 
We set $2^{n_l} \defeq [ \bQ(\alpha_l):\bQ ]$, $T = \{ l_1, l_2, l_3, \ldots \}$ with $n_{l_t} \leq n_{l_{t+1}}$ for $t \in \bZ_{>0}$, 
and 
$G_m \defeq G(\bQ(\alpha_{l_1}, \ldots ,\alpha_{l_m})/\bQ) 
= \prod_{1 \leq t \leq m} G(\bQ(\alpha_{l_t})/\bQ) \simeq \prod_{1 \leq t \leq m}\bZ/2^{n_{l_t}}\bZ$ (and by this isomorphism, identify $G_m$ with $\prod_{1 \leq t \leq m}\bZ/2^{n_{l_t}}\bZ$).
Note that the decomposition group $D_\infty$ at the real prime in $G_m$ is generated by the element $(2^{n_{l_1}-1}, \ldots ,2^{n_{l_m}-1})$.
We set 
\begin{equation*}X_m \defeq (2^{n_{l_t}-n_{l_1}})_{1 \leq t \leq m} + \prod_{1 \leq t \leq m}2^{n_{l_t}-n_{l_1}+1}\bZ/2^{n_{l_t}}\bZ.\end{equation*}
Then $\left\{ \langle x \rangle \subset G_m \mid 
x \in X_m \right\}$
coincides with 
the set of maximal elements of
\begin{equation*}\left\{ H \subset G_m \mid 
\text{$H$ is a cyclic subgroup of $G_m$ such that }
D_\infty \subset H \right\}.\end{equation*}
We set 
$Y_m \defeq \cup_{x \in X_m}\langle x \rangle$.
For $x \in X_m$, we have $\#\langle x \rangle = 2^{n_{l_1}}$.
Since 
$\# Y_m \leq \#X_m \cdot 2^{n_{l_1}} = (2^{n_{l_1}-1})^m \cdot 2^{n_{l_1}}$
and $\# G_m \geq 2^{n_{l_1}m}$, 
$\lim_{m \to \infty} \# Y_m / \# G_m = 0$.
Hence there exists $m' \in \bZ_{>0}$ such that $\delta_{\sup}(S) > \# Y_{m'} / \# G_{m'}$. 
Therefore, by the Chebotarev density theorem, 
there exists $x' \in G_{m'} \setminus Y_{m'}$ such that 
$\delta_{\sup}(S') > 0$
where \begin{equation*}S' \defeq 
\left\{ p \in S \setminus \Ram(\bQ(\alpha_{l_1}, \ldots ,\alpha_{l_{m'}})/\bQ) \mid 
\text{the Frobenius element at $p$ in $G_{m'}$ coincides with $x'$}
\right\}.\end{equation*}
Let $L$ be a finite subextension of $\bQ(\alpha_{l_1}, \ldots ,\alpha_{l_{m'}})/\bQ$ corresponding to $G_{m'} / \langle x' \rangle$.
Then, since $S' \subset \cs(L/\bQ)$, we have 
$\delta_{\sup}(S \cap \cs(L/\bQ)) \geq \delta_{\sup}(S') > 0$.
By construction, $L$ is totally imaginary, and 
$[L:\bQ]$ is a power of $2$.
\end{proof}

\section{Some properties of the Dirichlet density}

In this section, 
we 
prove some formulas 
about the Dirichlet density.

\begin{lem}\label{4.7}
Let $L/K$ be a finite extension of number fields and $S$ a set of primes of $K$.
If $S_f \subset \cs(L/K)$,
then 
\begin{equation*}
\delta_{\sup}(S(L)) = [L:K]\delta_{\sup}(S)
\text{ and }\delta_{\inf}(S(L)) = [L:K]\delta_{\inf}(S).
\end{equation*}
\end{lem}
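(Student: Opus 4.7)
The proof is essentially a direct computation from the definition of the density. The plan is to establish the identity
\[
\sum_{\mathfrak{q} \in S(L)_f} \frak{N}(\mathfrak{q})^{-s} \;=\; [L:K] \sum_{\p \in S_f} \frak{N}(\p)^{-s}
\]
for every $s > 1$, from which both statements about $\delta_{\sup}$ and $\delta_{\inf}$ follow immediately by dividing by $\log\frac{1}{s-1}$ and taking $\limsup_{s\to 1+0}$ or $\liminf_{s\to 1+0}$.

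To prove the identity, first I would note that a prime $\mathfrak{q} \in P_L$ is nonarchimedean if and only if its restriction $\mathfrak{q}|_K$ is nonarchimedean; hence $S(L)_f = \{\mathfrak{q} \in P_{L,f} \mid \mathfrak{q}|_K \in S_f\}$, so the sum on the left can be rewritten as $\sum_{\p \in S_f}\sum_{\mathfrak{q}|\p}\frak{N}(\mathfrak{q})^{-s}$. Next, I would use the hypothesis $S_f \subset \cs(L/K)$: for each $\p \in S_f$ there are exactly $[L:K]$ primes $\mathfrak{q}$ of $L$ above $\p$, each with ramification index and residual degree equal to $1$, so that $\kappa(\mathfrak{q}) = \kappa(\p)$ and therefore $\frak{N}(\mathfrak{q}) = \frak{N}(\p)$. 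Substituting gives the claimed identity.

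From here the passage to the density is routine: for each fixed $s > 1$ we divide both sides by $\log\frac{1}{s-1}$, and since $[L:K]$ is a positive constant it commutes with $\limsup$ and $\liminf$ as $s \to 1+0$. This yields simultaneously $\delta_{\sup}(S(L)) = [L:K]\,\delta_{\sup}(S)$ and $\delta_{\inf}(S(L)) = [L:K]\,\delta_{\inf}(S)$.

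There is really no main obstacle; the only point requiring minor care is the verification that $S(L)_f$ is exactly the set of nonarchimedean primes of $L$ lying above primes of $S_f$ (as opposed to, say, above archimedean primes of $S$), together with the observation that complete splitting forces $\frak{N}(\mathfrak{q}) = \frak{N}(\p)$ rather than a larger power of the residue characteristic.
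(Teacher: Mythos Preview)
Your proof is correct and follows essentially the same approach as the paper: both use that each $\p \in S_f \subset \cs(L/K)$ splits into exactly $[L:K]$ primes of $L$ with the same norm, yielding $\sum_{\mathfrak{q}\in S(L)_f}\frak{N}(\mathfrak{q})^{-s}=[L:K]\sum_{\p\in S_f}\frak{N}(\p)^{-s}$, and then pass to the $\limsup$/$\liminf$. Your version is simply a bit more explicit about the bookkeeping (e.g., identifying $S(L)_f$ with the primes above $S_f$).
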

\begin{proof}
Any prime in $\cs(L/K)$ splits completely into $[L:K]$ primes in $L/K$.
Therefore, if $S_f \subset \cs(L/K)$, then 
we obtain 
\begin{equation*}
\delta_{\sup}(S(L)) 
=\limsup_{s \to 1+0} \frac{\sum_{\p \in S_f} [L:K]\frak{N}(\p)^{-s}}{\log{\frac{1}{s-1}}}
=[L:K]\delta_{\sup}(S).
\end{equation*}
The proof of the assertion for $\delta_{\inf}$ is similar. 
\end{proof}

\begin{lem}\label{3.2}
For $i=1,2$, 
let $K_i$ be a number field, $S_i$ a set of primes of $K_i$, 
$T_i \subset S_{i,f}$ 
and $\sigma :G_{K_1,S_1}\isom G_{K_2,S_2}$ 
an 
isomorphism.
Assume that 
the good local correspondence between $T_1$ and $T_2$
holds for $\sigma$.
Then 
$\delta_{\sup}(T_1)=\delta_{\sup}(T_2)$.
Further, 
assume that for $i=1,2$, $K_i/\bQ$ is Galois.
Then 
for $i=1,2$, 
$\delta_{\sup}(T_i(K_1K_2))=[K_1K_2:K_i]\delta_{\sup}(T_i)$.
The similar assertions hold for $\delta_{\inf}$.
\end{lem}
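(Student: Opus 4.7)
The plan is to handle the two assertions separately. For the first, the good local correspondence provides a bijection $\sigma_{\ast,K_1} \colon T_1 \to T_2$ preserving both the residue characteristic and the residual degree, hence also the absolute norm $\frak{N}(\p) = p^{f_{\p,K_i/\bQ}}$. Consequently $\sum_{\p \in T_1}\frak{N}(\p)^{-s} = \sum_{\q \in T_2}\frak{N}(\q)^{-s}$ for every $s > 1$, and $\delta_{\sup}(T_1) = \delta_{\sup}(T_2)$ (and the analogous equality for $\delta_{\inf}$) follow at once from the definitions.

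For the second assertion, I would reduce to Lemma \ref{4.7} by showing that all but finitely many primes of $T_i$ split completely in $K_1K_2/K_i$. By symmetry I treat only $i=1$. Let $R \subset P_{\bQ,f}$ be the finite set of rational primes ramifying in $K_1K_2/\bQ$, and set $T_1' \defeq \{ \p \in T_1 \mid \p|_\bQ \notin R \}$, so that $T_1 \setminus T_1'$ is finite and therefore negligible for the density computation on both sides of the claimed formula. For $\p \in T_1'$ lying above $p \defeq \p|_\bQ$, the Galois hypothesis makes the residual degree $f_i(p)$ of primes of $K_i$ above $p$ depend only on $p$, and the good local correspondence applied to $(\p, \sigma_{\ast,K_1}(\p))$ yields $f_1(p) = f_2(p) =: f$.

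Under the restriction embedding $\Gal(K_1K_2/\bQ) \hookrightarrow \Gal(K_1/\bQ) \times \Gal(K_2/\bQ)$, a Frobenius element at $p$ maps to the pair $(\Frob_p^{K_1}, \Frob_p^{K_2})$, whose order in the product is $\operatorname{lcm}(f_1(p), f_2(p)) = f$; thus any prime of $K_1K_2$ above $p$ has residual degree $f$ over $\bQ$. Multiplicativity of residual degrees in the tower $\bQ \subset K_1 \subset K_1K_2$ forces the residual degree of $\p$ in $K_1K_2/K_1$ to be $f/f_1(p) = 1$, while $p \notin R$ ensures $\p$ is unramified there. Hence $T_1' \subset \cs(K_1K_2/K_1)$, and Lemma \ref{4.7} gives $\delta_{\sup}(T_1'(K_1K_2)) = [K_1K_2:K_1]\delta_{\sup}(T_1')$. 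Since $T_1 \setminus T_1'$ is finite and so does not affect either $\delta_{\sup}(T_1)$ or $\delta_{\sup}(T_1(K_1K_2))$, the formula for $i=1$ follows; the case $i=2$ and the $\delta_{\inf}$ statements are identical.

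The only nontrivial step is the inclusion $T_1' \subset \cs(K_1K_2/K_1)$, where the preservation of residual degrees from the good local correspondence meets the Galois hypothesis (which is what makes $f_i(p)$ depend only on $p$) via the Frobenius order computation inside $\Gal(K_1/\bQ) \times \Gal(K_2/\bQ)$. Everything else is an unwinding of the definition of Dirichlet density and a direct appeal to Lemma \ref{4.7}.
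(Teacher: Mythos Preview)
Your proof is correct. The first assertion is handled identically to the paper. For the second, you take a slightly different route: the paper first restricts to the primes of $T_1$ of residual degree $1$ over $\bQ$ (since higher-degree primes contribute nothing to density), and then uses the elementary observation that if $\sigma_{\ast,K_1}(\p_1)$ has residual degree $1$ and $K_2/\bQ$ is Galois, then $p$ splits completely in $K_2$, hence in $K_1K_2$. You instead prove the stronger statement that \emph{every} prime in $T_1'$ splits completely in $K_1K_2/K_1$, via the Frobenius-order computation $\operatorname{lcm}(f_1(p),f_2(p))=f$ inside $\Gal(K_1/\bQ)\times\Gal(K_2/\bQ)$. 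Your route yields a cleaner chain of equalities (apply Lemma~\ref{4.7} once to all of $T_1'$); the paper's route avoids the lcm argument at the cost of having to discard degree-$\geq 2$ primes both upstairs and downstairs.
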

\begin{proof}
Since 
the good local correspondence between $T_1$ and $T_2$
holds for $\sigma$, 
\begin{equation*}
\begin{split} 
\delta_{\sup}(T_1) &=\limsup_{s \to 1+0} \frac{\sum_{\p_1 \in T_1} \frak{N}(\p_1)^{-s}}{\log{\frac{1}{s-1}}} 
=\limsup_{s \to 1+0} \frac{\sum_{\p_2 \in T_2} \frak{N}(\p_2)^{-s}}{\log{\frac{1}{s-1}}}
=\delta_{\sup}(T_2).
\end{split}
\end{equation*}
By symmetry, it suffices to show the second assertion only for $i=1$.
Omitting finite sets from $T_1$ and $T_2$, 
we may assume that 
for $i=1,2$, 
any prime in $T_i(K_1K_2)$
is not ramified over $\bQ$.
We first show that 
$
T_1  \cap \cs(K_1/\bQ)(K_1) = T_1  \cap \cs(K_1K_2/\bQ)(K_1).
$
It is obvious that the right side is contained in the left side, so we show the converse.
Take $\p_1 \in T_1  \cap \cs(K_1/\bQ)(K_1)$ and set $p \defeq \p_1|_{\bQ}$.
Then $\p_1$ is of residual degree $1$ and we have $p \in \cs(K_1/\bQ)$.
Since the good local correspondence between $T_1$ and $T_2$ holds for $\sigma$, $\sigma_{\ast,K_1}(\p_1)$ is also above $p$ and of residual degree $1$.
As $K_2/\bQ$ is Galois, $p \in \cs(K_2/\bQ)$.
Thus, we have $p \in \cs(K_1K_2/\bQ)$, so that $\p_1 \in \cs(K_1K_2/\bQ)(K_1)$.
Therefore, 
\begin{equation*}
\begin{split} 
\delta_{\sup}(T_1(K_1K_2)) 
&= \delta_{\sup}(T_1(K_1K_2) \cap \cs(K_1K_2/\bQ)(K_1K_2))\\
&=\delta_{\sup}((T_1 \cap \cs(K_1K_2/\bQ)(K_1))(K_1K_2))\\
&=[K_1K_2:K_1]\delta_{\sup}(T_1 \cap \cs(K_1K_2/\bQ)(K_1))\\
&=[K_1K_2:K_1]\delta_{\sup}(T_1 \cap \cs(K_1/\bQ)(K_1))\\
&=[K_1K_2:K_1]\delta_{\sup}(T_1),
\end{split}
\end{equation*}
where the first and the fifth equalities follow from the fact that 
$K_1K_2$ and $K_1$ are Galois over $\bQ$ 
and that 
the primes of residual degrees $\geq 2$ do not contribute to the density, 
and the third equality follows from 
Lemma \ref{4.7}.
The proof of the assertions for $\delta_{\inf}$ is similar.
\end{proof}

By this lemma, we obtain another way to show the existence of an isomorphism of fields.

\begin{prop}\label{3.3}
For $i=1,2$, 
let $K_i$ be a number field, $S_i$ a set of primes of $K_i$, 
$T_i \subset S_{i,f}$ 
and $\sigma :G_{K_1,S_1}\isom G_{K_2,S_2}$ 
an 
isomorphism.
Assume that the following conditions hold:
\begin{itemize}
\item[(a)]
$K_i/\bQ$ is Galois for $i=1,2$.
\item[(b)]
The good local correspondence between $T_1$ and $T_2$
holds for $\sigma$.

\item[(c)]
$\delta_{\sup}(T_i) > 1/2$ for 
one $i$.
\end{itemize}
Then 
$K_1 \simeq K_2$.
\end{prop}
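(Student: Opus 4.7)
The plan is to argue by contradiction, reducing the problem to an incompatibility between two density computations: on one side the hypothesis $\delta_{\sup}(T_i) > 1/2$ forced to hold for both $i$ by the local correspondence, and on the other side the universal bound $\delta_{\sup}(\cdot) \leq 1$ for any set of primes of a number field. The overall aim is to show $K_1 = K_2$ as subfields of $\overline{\bQ}$; since $K_1/\bQ$ and $K_2/\bQ$ are both Galois, any abstract isomorphism between them would carry $K_1$ to itself as a subfield of $\overline{\bQ}$, so $K_1 \simeq K_2$ is actually equivalent to $K_1 = K_2$ after fixing embeddings into $\overline{\bQ}$ (which is harmless by the convention adopted in Section 3).

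First I would use Lemma \ref{3.2}, which applies thanks to assumption (b), to upgrade hypothesis (c) into the symmetric statement $\delta_{\sup}(T_1) = \delta_{\sup}(T_2) > 1/2$. Next, assume for contradiction that $K_1 \neq K_2$ inside $\overline{\bQ}$. Since $K_1, K_2$ are both Galois over $\bQ$, neither can be contained in the other unless they coincide, so at the very least one of the indices $[K_1K_2:K_1]$, $[K_1K_2:K_2]$ is $\geq 2$; by the symmetry we have just established, we may assume without loss of generality that $[K_1K_2:K_1] \geq 2$.

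Now the second part of Lemma \ref{3.2}, which requires only that $K_i/\bQ$ be Galois for $i=1,2$ and the good local correspondence between $T_1$ and $T_2$, gives
\[
\delta_{\sup}(T_1(K_1K_2)) = [K_1K_2 : K_1] \cdot \delta_{\sup}(T_1) > 2 \cdot \tfrac{1}{2} = 1.
\]
However, $T_1(K_1K_2) \subset P_{K_1K_2,f}$, so $\delta_{\sup}(T_1(K_1K_2)) \leq 1$, contradicting the inequality above. Hence $K_1 = K_2$, which proves $K_1 \simeq K_2$.

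There is essentially no obstacle of substance here; once Lemma \ref{3.2} is in place, the proof is a short density-counting argument. The one point that deserves care is the reduction step identifying $K_1 \simeq K_2$ with $K_1 = K_2$: this uses that a Galois subfield of $\overline{\bQ}$ is determined by its abstract isomorphism class, and it is this reduction (rather than the density estimate itself) that genuinely requires hypothesis (a). Everything else is a direct application of Lemma \ref{3.2} combined with the trivial bound $\delta_{\sup} \leq 1$.
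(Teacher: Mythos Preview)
Your approach is essentially identical to the paper's: use Lemma \ref{3.2} to get $\delta_{\sup}(T_1)=\delta_{\sup}(T_2)>1/2$ and $\delta_{\sup}(T_i(K_1K_2))=[K_1K_2:K_i]\,\delta_{\sup}(T_i)$, then conclude $[K_1K_2:K_i]=1$ from the bound $\delta_{\sup}\leq 1$. The paper does this directly (forcing both indices to equal $1$) rather than by contradiction, but the content is the same.

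One slip to fix: the claim ``since $K_1,K_2$ are both Galois over $\bQ$, neither can be contained in the other unless they coincide'' is false (e.g.\ $\bQ\subsetneq\bQ(i)$). Fortunately you do not need it: if $K_1\neq K_2$ then trivially $K_1K_2$ properly contains at least one of them, so at least one of the indices $[K_1K_2:K_i]$ is $\geq 2$, and your argument proceeds unchanged. Your remark that $K_1\simeq K_2$ forces $K_1=K_2$ when $K_1/\bQ$ is Galois is correct and is indeed the reason the paper concludes with $K_1=K_2$.
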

\begin{proof}
By Lemma \ref{3.2} 
and conditions (b), (c), 
we have $\delta_{\sup}(T_1)=\delta_{\sup}(T_2) > 1/2$ 
and
$\delta_{\sup}(T_i(K_1K_2))=[K_1K_2:K_i]\delta_{\sup}(T_i)$ for $i=1,2$.
Hence we have $[K_1K_2:K_i] = 1$ for $i=1,2$, 
so that $K_1 = K_2$.
\end{proof}

The results in the rest of this section 
are only used in the proof of Theorem \ref{4.9}.

\begin{definition}
Let $K$ be a number field and $S$ a set of primes of $K$. 
Then we set 
\begin{equation*}
S^{\s} \defeq \{ \p \in S_f \mid 
\text{the local degree of $\p$ is $1$}\}
,
S^{\s,\ff} \defeq \{ \p \in S^{\s} \mid 
\text{$P_{K, \p \vert_\bQ} \subset S^{\s}$}\}
=P_{S^{\s}}(K).\footnote{``s'' is an abbreviation for ``split'' and ``ff'' for ``fiber-full''.}\end{equation*}
\end{definition}

\begin{lem}\label{4.4}
Let $K$ be a number field, $S$ a set of primes of $K$. 
Then 
\begin{equation*}
\begin{split}
\delta_{\sup}(S) \leq 
1-\frac{1}{[\widetilde{K}:\bQ]}+\frac{\delta_{\sup}(S^{\s,\ff})}{[K:\bQ]}
\text{ and }
\delta_{\inf}(S) \leq 
1-\frac{1}{[\widetilde{K}:\bQ]}+\frac{\delta_{\inf}(S^{\s,\ff})}{[K:\bQ]}.
\end{split}
\end{equation*}
\end{lem}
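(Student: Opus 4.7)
The plan is to pass to the Galois closure $\widetilde K/\bQ$ and count $\p\in S^{\s}$ via the orbit structure of Frobenius. First I reduce to $S^{\s}$: any $\p\in S_f\setminus S^{\s}$ satisfies $\frak{N}(\p)\geq p^2$, so $\sum_{\p\in S_f\setminus S^{\s}}\frak{N}(\p)^{-s}$ stays bounded as $s\to 1{+}0$, giving $\delta_{\sup}(S)=\delta_{\sup}(S^{\s})$ and $\delta_{\inf}(S)=\delta_{\inf}(S^{\s})$. Set $G:=G(\widetilde K/\bQ)$, $H:=G(\widetilde K/K)$, $X:=G/H$, $n:=[K:\bQ]=|X|$, $N:=[\widetilde K:\bQ]=|G|$. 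For $p\in P_{\bQ,f}$ unramified in $\widetilde K$, the primes $\p\in P_{K,f}$ above $p$ biject with the $\langle\Frob_p\rangle$-orbits on $X$, with residue degree $1$ exactly for the fixed points; writing $n(p):=\#\{\p\in S^{\s}:\p|_{\bQ}=p\}$, one has $n(p)\leq\#X^{\Frob_p}$.

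The crucial refinement is $n(p)\leq\#X^{\Frob_p}-1$ whenever $p\in\cs(K/\bQ)\setminus P_{S^{\s}}$: such a $p$ satisfies $\Frob_p=1$ in $G$ (since $G$ acts faithfully on $X$), hence $\#X^{\Frob_p}=n$ and all $n$ primes above $p$ have residue degree $1$; but $p\notin P_{S^{\s}}$ forces at least one to fail to lie in $S$. Using $P_{S^{\s}}\subset\cs(K/\bQ)$ to rearrange indicator functions, this gives
\[
\sum_{\p\in S^{\s}}\frak{N}(\p)^{-s}\;\leq\;\sum_p\#X^{\Frob_p}\,p^{-s}\;-\;\sum_{p\in\cs(K/\bQ)}p^{-s}\;+\;\sum_{p\in P_{S^{\s}}}p^{-s}
\]
up to a bounded contribution from the finitely many primes ramified in $\widetilde K$. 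By the Chebotarev density theorem for $\widetilde K/\bQ$, the first sum divided by $\log\tfrac{1}{s-1}$ tends to $\tfrac{1}{|G|}\sum_{g\in G}\#X^g=1$ (Burnside's orbit-counting lemma, using the transitivity of $X$), and the second divided by $\log\tfrac{1}{s-1}$ tends to $1/N$ since $\cs(K/\bQ)=\cs(\widetilde K/\bQ)$.

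Finally I take $\limsup$ (resp.\ $\liminf$) as $s\to 1{+}0$ of the displayed inequality divided by $\log\tfrac{1}{s-1}$. Because the first two terms on the right have honest limits, $\limsup$ (resp.\ $\liminf$) distributes over their sum cleanly, yielding $\delta_{\sup}(S^{\s})\leq 1-1/N+\delta_{\sup}(P_{S^{\s}})$ and the analogous inequality for $\delta_{\inf}$. Since $P_{S^{\s}}\subset\cs(K/\bQ)$ and $S^{\s,\ff}=P_{S^{\s}}(K)$, Lemma \ref{4.7} gives $\delta_{\sup}(S^{\s,\ff})=n\cdot\delta_{\sup}(P_{S^{\s}})$ and likewise for $\delta_{\inf}$, finishing the proof. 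The one point requiring care is the $\limsup/\liminf$ step, but since only one of the three right-hand summands lacks a limit it is routine.
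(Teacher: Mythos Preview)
Your proof is correct and rests on the same key observation as the paper's: for $p\in\cs(K/\bQ)\setminus P_{S^{\s}}$ all $[K:\bQ]$ primes above $p$ are of local degree $1$ but at least one lies outside $S$, which buys you a savings of $1$ in the fiber.

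The difference is organizational. The paper stays upstairs in $K$: it splits $S_f$ along the partition $P_{K,f}=(P_{K,f}\setminus P_K^{\s,\ff})\sqcup P_K^{\s,\ff}$, uses $\delta(P_{K,f})=1$ and $\delta(P_K^{\s,\ff})=[K:\bQ]/[\widetilde K:\bQ]$ directly, and then bounds $S\cap P_K^{\s,\ff}$ by the same ``missing prime'' argument you use. You instead descend to $\bQ$, encode everything as the single pointwise inequality $n(p)\leq\#X^{\Frob_p}-\mathbf 1_{\cs(K/\bQ)}(p)+\mathbf 1_{P_{S^{\s}}}(p)$, and evaluate $\sum_p\#X^{\Frob_p}p^{-s}$ via Chebotarev plus Burnside. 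Your route is a bit more conceptual and avoids the intermediate $[K:\bQ]$ factors that appear and then cancel in the paper's computation; the paper's route is slightly more elementary in that it never needs the Galois action on $G/H$ explicitly. Both reduce to Lemma~\ref{4.7} at the end in the same way, and your handling of the $\limsup/\liminf$ step (only one summand without a limit) is fine.
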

\begin{proof}
Since 
$S_f 
\subset (P_{K,f} \setminus P_K^{\s,\ff}) \coprod (S_f \cap P_K^{\s,\ff})$, 
we have for $s>1$, 
\begin{equation*}
\begin{split}
\frac{\sum_{\p \in S_f} \frak{N}(\p)^{-s}}{\log{\frac{1}{s-1}}}
&\leq 
\frac{\sum_{\p \in P_{K,f} \setminus P_K^{\s,\ff}} \frak{N}(\p)^{-s}}{\log{\frac{1}{s-1}}} 
+ \frac{\sum_{\p \in S_f \cap P_K^{\s,\ff}} \frak{N}(\p)^{-s}}{\log{\frac{1}{s-1}}}.
\end{split}
\end{equation*}
Here, 
$P_K^{\s,\ff} = P_{P_K^{\s}}(K) = 
\cs(K/\bQ)(K) = \cs(\widetilde{K}/\bQ)(K)$ 
and 
prime numbers in $\cs(\widetilde{K}/\bQ)$ 
split completely into $[K:\bQ]$ primes in $K/\bQ$.
Therefore, we have for $s>1$, 
\begin{equation*}
\begin{split}
\frac{\sum_{\p \in P_{K,f} \setminus P_K^{\s,\ff}} \frak{N}(\p)^{-s}}{\log{\frac{1}{s-1}}} 
&=
\frac{\sum_{\p \in P_{K,f}} \frak{N}(\p)^{-s}}{\log{\frac{1}{s-1}}} 
-
\frac{\sum_{\p \in P_K^{\s,\ff}} \frak{N}(\p)^{-s}}{\log{\frac{1}{s-1}}}\\
&=
\frac{\sum_{\p \in P_{K,f}} \frak{N}(\p)^{-s}}{\log{\frac{1}{s-1}}} 
-
[K:\bQ] \cdot 
\frac{\sum_{p \in \cs(\widetilde{K}/\bQ)} p^{-s}}{\log{\frac{1}{s-1}}}.
\end{split}
\end{equation*}
On the other hand, 
$S \cap P_K^{\s,\ff} = S^{\s} \cap P_K^{\s,\ff} 
= S^{\s,\ff} \coprod ((S^{\s} \setminus S^{\s,\ff}) \cap P_K^{\s,\ff})$.
Further, 
\begin{equation*}(S^{\s} \setminus S^{\s,\ff}) \cap P_K^{\s,\ff} \subset P_K^{\s,\ff} \setminus S^{\s,\ff} = (\cs(\widetilde{K}/\bQ) \setminus P_{S^{\s}})(K)\end{equation*} and 
in $K/\bQ$, 
$p \in \cs(\widetilde{K}/\bQ) \setminus P_{S^{\s}}$ 
splits completely into $[K:\bQ]$ primes, 
all of which are not contained in 
$(S^{\s} \setminus S^{\s,\ff}) \cap P_K^{\s,\ff}$.
Hence 
we have for $s>1$, 
\begin{equation*}
\begin{split}
&\frac{\sum_{\p \in S \cap P_K^{\s,\ff}} \frak{N}(\p)^{-s}}{\log{\frac{1}{s-1}}}
\leq
\frac{\sum_{\p \in S^{\s,\ff}} \frak{N}(\p)^{-s}}{\log{\frac{1}{s-1}}} 
+ 
\frac{\sum_{\p \in (S^{\s} \setminus S^{\s,\ff}) \cap P_K^{\s,\ff}} \frak{N}(\p)^{-s}}{\log{\frac{1}{s-1}}}\\
&\leq
\frac{\sum_{\p \in S^{\s,\ff}} \frak{N}(\p)^{-s}}{\log{\frac{1}{s-1}}} 
+ 
\frac{\sum_{p \in \cs(\widetilde{K}/\bQ) \setminus P_{S^{\s}}} ([K:\bQ]-1)p^{-s}}{\log{\frac{1}{s-1}}}\\
&=
\frac{\sum_{\p \in S^{\s,\ff}} \frak{N}(\p)^{-s}}{\log{\frac{1}{s-1}}} 
+ 
\frac{\sum_{p \in \cs(\widetilde{K}/\bQ)} ([K:\bQ]-1)p^{-s}}{\log{\frac{1}{s-1}}}
- 
\frac{\sum_{p \in P_{S^{\s}}} ([K:\bQ]-1)p^{-s}}{\log{\frac{1}{s-1}}}\\
&=
\frac{\sum_{\p \in S^{\s,\ff}} \frak{N}(\p)^{-s}}{\log{\frac{1}{s-1}}} 
+ 
([K:\bQ]-1) \cdot 
\frac{\sum_{p \in \cs(\widetilde{K}/\bQ)} p^{-s}}{\log{\frac{1}{s-1}}}
- 
\frac{[K:\bQ]-1}{[K:\bQ]} \cdot 
\frac{\sum_{\p \in S^{\s,\ff}} \frak{N}(\p)^{-s}}{\log{\frac{1}{s-1}}}\\
&=
\frac{1}{[K:\bQ]} \cdot 
\frac{\sum_{\p \in S^{\s,\ff}} \frak{N}(\p)^{-s}}{\log{\frac{1}{s-1}}} 
+ 
([K:\bQ]-1) \cdot 
\frac{\sum_{p \in \cs(\widetilde{K}/\bQ)} p^{-s}}{\log{\frac{1}{s-1}}}.
\end{split}
\end{equation*}
By the Chebotarev density theorem, 
$\delta(\cs(\widetilde{K}/\bQ))=1/[\widetilde{K}:\bQ]$. 
Thus, we obtain 
\begin{equation*}
\begin{split}
\delta_{\sup}(S) &\leq 
\limsup_{s \to 1+0} \left( 
\frac{\sum_{\p \in P_{K,f}} \frak{N}(\p)^{-s}}{\log{\frac{1}{s-1}}} 
-
\frac{\sum_{p \in \cs(\widetilde{K}/\bQ)} p^{-s}}{\log{\frac{1}{s-1}}}
+
\frac{1}{[K:\bQ]} \cdot 
\frac{\sum_{\p \in S^{\s,\ff}} \frak{N}(\p)^{-s}}{\log{\frac{1}{s-1}}} 
\right) \\
&= 
\delta_{}(P_{K,f})
- \delta_{}(\cs(\widetilde{K}/\bQ))
+
\frac{\delta_{\sup}(S^{\s,\ff})}{[K:\bQ]}
=1-
\frac{1}{[\widetilde{K}:\bQ]}
+
\frac{\delta_{\sup}(S^{\s,\ff})}{[K:\bQ]}.
\end{split}
\end{equation*}
The proof of the assertion for $\delta_{\inf}$ is similar.
\end{proof}

\begin{lem}\label{4.10}
Let $K$ be a number field and $T \subset S$ sets of primes of $K$.
Assume that $S$ has Dirichlet density.
Then 
\begin{equation*}
\delta_{\sup}(S \setminus T) + \delta_{\inf}(T) = \delta(S).
\end{equation*}
\end{lem}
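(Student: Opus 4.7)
The plan is to reduce the statement to a standard identity relating $\limsup$, $\liminf$, and limits. For $s > 1$, define
$$
f(s) \defeq \frac{\sum_{\p \in S_f} \frak{N}(\p)^{-s}}{\log{\frac{1}{s-1}}},\
g(s) \defeq \frac{\sum_{\p \in T_f} \frak{N}(\p)^{-s}}{\log{\frac{1}{s-1}}},\
h(s) \defeq \frac{\sum_{\p \in (S \setminus T)_f} \frak{N}(\p)^{-s}}{\log{\frac{1}{s-1}}}.
$$
Since $T \subset S$, we have $S_f = T_f \sqcup (S \setminus T)_f$, so $f(s) = g(s) + h(s)$ for all $s > 1$. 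By hypothesis $S$ has Dirichlet density, i.e.\ $\lim_{s \to 1+0} f(s) = \delta(S)$.

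The next step is to apply the following elementary fact: if $\varphi(s)$ has a limit $L$ as $s \to 1+0$, then for any real-valued function $\psi$ on $(1, \infty)$ we have
$\limsup_{s \to 1+0}(\varphi - \psi)(s) = L - \liminf_{s \to 1+0} \psi(s)$.
This follows immediately from $\limsup(a+b) = \lim a + \limsup b$ when one summand converges, applied to $\varphi + (-\psi)$. Taking $\varphi = f$ and $\psi = g$ gives $h = f - g$, hence
$$
\delta_{\sup}(S \setminus T) = \limsup_{s \to 1+0} h(s) = \delta(S) - \liminf_{s \to 1+0} g(s) = \delta(S) - \delta_{\inf}(T),
$$
which rearranges to the desired identity.

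There is essentially no obstacle: the only subtle point is invoking the $\limsup$-identity in the correct direction (with $f$ playing the role of the convergent function), and ensuring that the additive decomposition of the Dirichlet-type sum is valid, which holds because $T \subset S$ gives a disjoint decomposition of $S_f$.
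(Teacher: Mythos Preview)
Your proof is correct and takes essentially the same approach as the paper: both exploit the additive decomposition $f = g + h$ together with the fact that $f$ has a limit, and then use the standard relation between $\limsup$, $\liminf$, and limits. The paper phrases this via sequences (choosing a sequence $s_n \to 1^+$ along which one of $g$, $h$ converges to its extremal value and observing that the other must then converge to the complementary value), whereas you invoke the identity $\limsup(\varphi - \psi) = \lim\varphi - \liminf\psi$ directly; these are the same argument in different dress.
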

\begin{proof}
Take a sequence $\{ s_n \}_{n \in \bZ_{>0}}$ of real numbers with $s_n > 1$ such that 
$\lim_{n \to \infty} s_n = 1$. 
Then 
$\lim_{n \to \infty} 
\frac{\sum_{\p \in S} \frak{N}(\p)^{-s_n}}{\log{\frac{1}{s_n-1}}} 
= \delta(S)$.
The sequence $\{ \frac{\sum_{\p \in T} \frak{N}(\p)^{-s_n}}{\log{\frac{1}{s_n-1}}}  \}_{n \in \bZ_{>0}}$ is convergent 
if and only if 
the sequence $\{ \frac{\sum_{\p \in S \setminus T} \frak{N}(\p)^{-s_n}}{\log{\frac{1}{s_n-1}}}  \}_{n \in \bZ_{>0}}$ is convergent.
Hence, if one of them is convergent, 
both of them are convergent and 
we have 
\begin{equation*}
\begin{split}
\lim_{n \to \infty} 
\frac{\sum_{\p \in S \setminus T} \frak{N}(\p)^{-s_n}}{\log{\frac{1}{s_n-1}}} 
+
\lim_{n \to \infty} 
\frac{\sum_{\p \in T} \frak{N}(\p)^{-s_n}}{\log{\frac{1}{s_n-1}}} 
=
\delta(S).
\end{split}
\end{equation*}
Therefore, 
$\{ \frac{\sum_{\p \in T} \frak{N}(\p)^{-s_n}}{\log{\frac{1}{s_n-1}}}  \}_{n \in \bZ_{>0}}$ is convergent to $\delta_{\inf}(T)$ 
if and only if 
$\{ \frac{\sum_{\p \in S \setminus T} \frak{N}(\p)^{-s_n}}{\log{\frac{1}{s_n-1}}}  \}_{n \in \bZ_{>0}}$ is convergent to $\delta_{\sup}(S \setminus T)$.
Thus, we obtain the assertion.
\end{proof}

\section{Main results}

In this section, we prove the main theorems 
in this paper 
using the results obtained so far.

\begin{theorem}\label{4.0}
For $i=1,2$, 
let $K_i$ be a number field, $S_i$ a set of primes of $K_i$ 
with $P_{K_i,\infty} \subset S_i$ 
and $\sigma :G_{K_1,S_1}\isom G_{K_2,S_2}$ 
an 
isomorphism.
Assume that the following conditions hold:
\begin{itemize}

\item[(a)]
$\# P_{S_i,f} \geq 2$ for $i=1,2$.

\item[(b)]
For one $i$, 
$\delta(P_{S_i,f} \cap \cs(K_i/\bQ)) \neq 0$.

\item[(c)]
For the $i$ in condition (b), 
there exists 
a prime number $l \in P_{S_1,f} \cap P_{S_2,f}$ 
such that 
$S_{3-i}$ 
satisfies condition $(\star_l)$.
\end{itemize}
Then 
$K_1 \simeq K_2$.
\end{theorem}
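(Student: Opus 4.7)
The plan is to string together the main results of Sections~1--4 as follows. First, I show that condition (b) already yields $(\star_l)$ for $S_i$: every prime $p\in P_{S_i,f}\cap\cs(K_i/\bQ)$ splits completely in $K_i/\bQ$ and its full fibre lies in $S_{i,f}$ by definition of $P_{S_i,f}$, so by Lemma~\ref{4.7} one has
\[
\delta_{\sup}(S_i)\;\geq\;[K_i:\bQ]\,\delta(P_{S_i,f}\cap\cs(K_i/\bQ))\;>\;0.
\]
Proposition~\ref{1.20} then gives that $S_i$ satisfies $(\star_l)$ for the $l$ of (c), and combined with (c) itself both $S_1$ and $S_2$ satisfy $(\star_l)$. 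Theorem~\ref{2.5} supplies the local correspondence between $S_{1,f}$ and $S_{2,f}$ for $\sigma$, and Proposition~\ref{2.7} (applicable by (a)) upgrades it to the good local correspondence between $T_i:=P_{S_{i,f}}(K_i)$ for $i=1,2$, while also yielding $[K_1:\bQ]=[K_2:\bQ]$, $P_{S_{1,f}}=P_{S_{2,f}}$, and the symmetry $P_{S_{1,f}}\cap\cs(K_1/\bQ)=P_{S_{2,f}}\cap\cs(K_2/\bQ)$.

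Fix embeddings $K_1,K_2\hookrightarrow\overline{\bQ}$ and let $L:=\widetilde{K_1K_2}$ be the Galois closure of $K_1K_2$ over $\bQ$; by symmetry assume (b) holds for $i=1$, and put $R:=P_{S_{1,f}}\cap\cs(K_1/\bQ)\subset P_{\bQ}$. By the symmetry just obtained, every prime in $R$ splits completely in both $K_1$ and $K_2$, hence in $K_1K_2$, and therefore in its Galois closure $L$. Thus $R\subset\cs(L/\bQ)$, and Lemma~\ref{4.7} gives
\[
\delta_{\sup}(R(L))\;=\;[L:\bQ]\,\delta_{\sup}(R)\;>\;0;
\]
since $R(L)\subset T_1(L)$, we conclude $\delta(T_1(L))\neq 0$, so Proposition~\ref{4f.3} produces $\tau\in G(L/\bQ)$ with
\[
\overline{\sigma}\circ\pi_{L/K_1,l}\;=\;(\tau|_{K_2})^{\ast}\circ\pi_{L/\tau(K_2),l}.
\]
Setting $K_2':=\tau(K_2)$ and $\overline{\sigma'}:=((\tau|_{K_2})^{\ast})^{-1}\circ\overline{\sigma}:\Gamma_{K_1,l}\isom\Gamma_{K_2',l}$ gives $\overline{\sigma'}\circ\pi_{L/K_1,l}=\pi_{L/K_2',l}$, so Lemma~\ref{4f.4} yields the key identification $K_1^{(\infty,l)}K_2'=K_1K_2'^{(\infty,l)}$.

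At this point I invoke Proposition~\ref{4f.5} to conclude $K_1=K_2'$, whence $K_1\simeq K_2$ via $\tau|_{K_2}:K_2\overset{\sim}\to K_2'$. The main obstacle is the complex-prime hypothesis of Proposition~\ref{4f.5}: the rank argument of Lemma~\ref{4f.2} degenerates if both $K_1$ and $K_2'$ are totally real. When $K_1$ (or, since $[K_1:\bQ]=[K_2:\bQ]$, $K_2'$) has a complex prime, Proposition~\ref{4f.5} closes the argument immediately. When both are totally real I would rerun the entire argument for the open subgroups $V_i\subset G_{K_i,S_i}$ corresponding to $K_i(\mu_{\ltilde})/K_i$ (which are totally imaginary), using Remark~\ref{2.6} to transfer the good local correspondence, and then descend the resulting isomorphism $K_1(\mu_{\ltilde})\simeq K_2(\mu_{\ltilde})$ to one between $K_1$ and $K_2$ via Galois-equivariance under the cyclotomic characters group-theoretically reconstructed in Theorem~\ref{1.17}.
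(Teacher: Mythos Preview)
Your argument up through the application of Proposition~\ref{4f.3}, Lemma~\ref{4f.4}, and Proposition~\ref{4f.5} is correct when one of the $K_i$ has a complex prime; this is essentially the proof of Theorem~\ref{4.1}, and your use of the symmetry $P_{S_{1,f}}\cap\cs(K_1/\bQ)=P_{S_{2,f}}\cap\cs(K_2/\bQ)$ from Proposition~\ref{2.7} to verify $R\subset\cs(L/\bQ)$ is a clean way to check condition~(b) of Proposition~\ref{4f.3}.

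The gap is in your treatment of the totally real case. First, the open subgroups $V_i$ corresponding to $K_i(\mu_{\ltilde})$ need not match under $\sigma$: Theorem~\ref{1.17} only reconstructs $w:\Gamma_0\to 1+\ltilde\bZ_l$, i.e.\ the cyclotomic character modulo its torsion part, and does \emph{not} pin down $K_i(\mu_{\ltilde})$ inside $K_{i,S_i}$. Second, and more seriously, even if you pass to corresponding open subgroups $U_i$ with totally imaginary fixed fields $L_i$ and obtain $L_1=\tau(L_2)$ for some $\tau\in G(M/\bQ)$, there is no mechanism to deduce $\tau(K_2)=K_1$: the field isomorphism $\tau|_{L_2}$ carries no a priori compatibility with $\sigma$, and $K_i$ is not characterized inside $L_i$ by any invariant that $\tau$ must preserve. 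Your appeal to ``Galois-equivariance under the cyclotomic characters'' does not supply such a characterization.

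The paper circumvents this descent problem entirely. Using Proposition~\ref{4f.6}, it constructs \emph{two} totally imaginary extensions $L_1=K_1L_0$ and $L_1'=K_1L_0'$, built from disjoint sets of roots of unity so that $L_1\cap L_1'=K_1$, together with $M_1=L_1L_1'$; the corresponding $L_2,L_2',M_2$ are taken via $\sigma$, so that automatically $L_2\cap L_2'=K_2$. Applying Proposition~\ref{4f.3} once at the level of $M_1,M_2$ produces a single $\tau$, and the factorizations $\pi_{M/L_1}=\pi_{M_1/L_1}\circ\pi_{M/M_1}$ (and similarly for $L_1'$) force $L_1=\tau(L_2)$ and $L_1'=\tau(L_2')$ for the \emph{same} $\tau$. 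Intersecting then gives $K_1=L_1\cap L_1'=\tau(L_2)\cap\tau(L_2')=\tau(K_2)$ directly, with no descent step needed.
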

\begin{proof}
By symmetry, we may assume that the $i$ in condition (b) is $1$.
By Lemma \ref{4.7}, 
we have 
$\delta_{\sup}(S_1) \geq \delta_{\sup}((P_{S_1,f} \cap \cs(K_1/\bQ))(K_1)) 
= [K_1 : \bQ]\delta_{\sup}(P_{S_1,f} \cap \cs(K_1/\bQ)) 
> 0$.
Hence, by Proposition \ref{1.20}, 
$S_1$ satisfies condition $(\star_l)$ 
for the prime number $l$ in condition (c).
Therefore, 
the local correspondence between $S_{1,f}$ and $S_{2,f}$
holds for $\sigma$ by Theorem \ref{2.5}, 
so that 
$P_{S_1,f} = P_{S_2,f}$ 
by Proposition \ref{2.7}.

Next, we define subextensions $L_1$, $L_1'$ and $M_1$ of $K_{1,{S_1}}/K_1$ as follows. 
Take subsets $S_0, S_0' \subset P_{S_1,f}$ 
with $\# S_0 = \infty$, $\# S_0' = \infty$ and $S_0 \cap S_0' = \emptyset$ 
such that 
$\widetilde{K_1}$, $\bQ(\mu_{\ltilde} ; l \in S_0)$ and $\bQ(\mu_{\ltilde} ; l \in S_0')$ are linearly disjoint. 
By Proposition \ref{4f.6}, 
there exists a finite subextension $L_0$ (resp. $L_0'$) of $\bQ(\mu_{\ltilde} ; l \in S_0)/\bQ$ (resp. $\bQ(\mu_{\ltilde} ; l \in S_0')/\bQ$) such that 
$L_0$ (resp. $L_0'$) is totally imaginary and 
$\delta_{\sup}(P_{S_1,f} \cap \cs(K_1/\bQ) \cap \cs(L_0/\bQ)) > 0$ (resp. $\delta_{\sup}(P_{S_1,f} \cap \cs(K_1/\bQ) \cap \cs(L_0/\bQ) \cap \cs(L_0'/\bQ)) > 0$).
Then we set $L_1 \defeq K_1L_0$, $L_1' \defeq K_1L_0'$ and $M_1 \defeq L_1L_1' = K_1L_0L_0'$.

We write subextensions $L_2$, $L_2'$ and $M_2$ of $K_{2,{S_2}}/K_2$ for the finite subextensions corresponding to 
$\sigma(G_{L_1,S_1(L_1)})$, $\sigma(G_{L_1',S_1(L_1')})$ and $\sigma(G_{M_1,S_1(M_1)})$, respectively.
Note that 
$L_1$ and $L_1'$ are totally imaginary, and 
$L_i \cap L_i' = K_i$ for $i=1,2$.
By Remark \ref{2.6} and Proposition \ref{2.7}, 
we have 
$P_{S_1,f} \cap \cs(M_1/\bQ) =P_{S_2,f} \cap \cs(M_2/\bQ)$ and 
the good local correspondence between $P_{S_1,f}(M_1)$ and $P_{S_2,f}(M_2)$ holds for $\sigma|_{G_{M_1,S_1(M_1)}} : G_{M_1,S_1(M_1)}\isom G_{M_2,S_2(M_2)}$.
Now, we set 
$M \defeq \widetilde{M_1}\widetilde{M_2}$.
Then $M/\bQ$ is Galois and we have 
\begin{equation*}
\begin{split}
P_{S_1,f} \cap \cs(M/\bQ) &= P_{S_1,f} \cap \cs(\widetilde{M_1}/\bQ) \cap \cs(\widetilde{M_2}/\bQ)\\
&= P_{S_1,f}\cap \cs(M_1/\bQ) \cap \cs(M_2/\bQ)\\
&= P_{S_1,f} \cap \cs(M_1/\bQ)\\
&= P_{S_1,f} \cap \cs(K_1/\bQ)\cap \cs(L_0/\bQ)\cap \cs(L_0'/\bQ),
\end{split}
\end{equation*}
so that $\delta_{\sup}(P_{S_1,f} \cap \cs(M/\bQ)) > 0$.
By Lemma \ref{4.7}, 
\begin{equation*}\delta_{\sup}(P_{S_1,f}(M)) 
= [M : \bQ]\delta_{\sup}(P_{S_1,f} \cap \cs(M/\bQ)) 
> 0.\end{equation*}
Take $l \in P_{S_1,f} (= P_{S_2,f})$.
By Proposition \ref{4f.3}, 
there exists $\tau \in G(M/\bQ)$ such that 
\begin{equation*}\overline{\sigma}_{M_1} \circ \pi_{M/M_1, l} = (\tau|_{M_2})^\ast \circ \pi_{M/\tau(M_2), l},\end{equation*} 
where $\overline{\sigma}_{M_1}:\Gamma_{M_1, l} \isom \Gamma_{M_2, l}$ is the isomorphism induced by $\sigma|_{G_{M_1,S_1(M_1)}} : G_{M_1,S_1(M_1)}\isom G_{M_2,S_2(M_2)}$. 
Then 
\begin{equation*}
\begin{split}
\overline{\sigma}_{L_1} \circ \pi_{M/L_1, l} 
&=\overline{\sigma}_{L_1} \circ \pi_{M_1/L_1, l} \circ \pi_{M/M_1, l} \\
&=\pi_{M_2/L_2, l} \circ \overline{\sigma}_{M_1} \circ \pi_{M/M_1, l} \\
&=\pi_{M_2/L_2, l} \circ (\tau|_{M_2})^\ast \circ \pi_{M/\tau(M_2), l} \\
&=(\tau|_{L_2})^\ast \circ \pi_{\tau(M_2)/\tau(L_2), l} \circ \pi_{M/\tau(M_2), l} \\
&= (\tau|_{L_2})^\ast \circ \pi_{M/\tau(L_2), l}, 
\end{split}
\end{equation*}
where $\overline{\sigma}_{L_1}:\Gamma_{L_1, l} \isom \Gamma_{L_2, l}$ is the isomorphism induced by $\sigma|_{G_{L_1,S_1(L_1)}} : G_{L_1,S_1(L_1)}\isom G_{L_2,S_2(L_2)}$. 
Therefore, ${(\tau|_{L_2})^\ast}^{-1} \circ \overline{\sigma}_{L_1} \circ \pi_{M/L_1, l} = \pi_{M/\tau(L_2), l}$, 
so that we have 
$L_1^{(\infty,l)}\tau(L_2) = L_1\tau(L_2)^{(\infty,l)}$ 
by Lemma \ref{4f.4}.
Then $L_1^{(\infty,l)}M = M\tau(L_2)^{(\infty,l)}$.
By Proposition \ref{4f.5}, we obtain $L_1 = \tau(L_2)$.
Similarly, we have $L_1' = \tau(L_2')$.
Thus, 
$K_1 = L_1 \cap L_1' = \tau(L_2 \cap L_2') = \tau(K_2)$.
\end{proof}

\begin{cor}\label{4.0.5}
For $i=1,2$, 
let $K_i$ be a number field, $S_i$ a set of primes of $K_i$ 
with $P_{K_i,\infty} \subset S_i$ 
and $\sigma :G_{K_1,S_1}\isom G_{K_2,S_2}$ 
an 
isomorphism.
Assume that the following conditions hold:
\begin{itemize}

\item[(a)]
$P_{S_1,f} \cap P_{S_2,f} \neq \emptyset$.

\item[(b)]
$\delta(P_{S_i,f} \cap \cs(K_i/\bQ)) \neq 0$ for $i=1,2$.

\end{itemize}
Then 
$K_1 \simeq K_2$.
\end{cor}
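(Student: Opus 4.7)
The plan is to derive Corollary \ref{4.0.5} as a direct application of Theorem \ref{4.0}. Since the corollary's hypotheses are symmetric and slightly weaker than those stated for the theorem, the task reduces to a bookkeeping check that all three conditions (a), (b), (c) of Theorem \ref{4.0} are met.

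First, I would verify condition (a) of Theorem \ref{4.0}. By hypothesis (b) of the corollary, the set $P_{S_i,f} \cap \cs(K_i/\bQ)$ has nonzero Dirichlet density for each $i$; in particular it is infinite, so a fortiori $\#P_{S_i,f} \geq 2$ for $i=1,2$. Condition (b) of Theorem \ref{4.0} is then just a special case of the corollary's condition (b), so I fix (say) $i = 1$ for the role of ``the $i$ in condition (b)''.

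Next, I would supply the prime $l$ required by condition (c). Hypothesis (a) of the corollary states that $P_{S_1,f} \cap P_{S_2,f} \neq \emptyset$, so I choose any $l$ in this intersection. It remains to show that $S_{3-i} = S_2$ satisfies condition $(\star_l)$. To this end, I apply Lemma \ref{4.7} to $S \defeq P_{S_2,f} \cap \cs(K_2/\bQ) \subset P_{\bQ,f}$, viewed as a set of primes of $\bQ$ whose finite part lies in $\cs(K_2/\bQ)$: noting that $S(K_2) = (P_{S_2,f} \cap \cs(K_2/\bQ))(K_2) \subset S_{2,f}$, one obtains
\[
\delta_{\sup}(S_2) \;\geq\; \delta_{\sup}\bigl((P_{S_2,f} \cap \cs(K_2/\bQ))(K_2)\bigr) \;=\; [K_2:\bQ]\cdot \delta_{\sup}(P_{S_2,f} \cap \cs(K_2/\bQ)) \;>\; 0,
\]
so $\delta(S_2) \neq 0$. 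By Proposition \ref{1.20}, $S_2$ then satisfies $(\star_l)$ for every prime number $l$, in particular for the $l$ chosen above.

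With the three hypotheses of Theorem \ref{4.0} now verified, the conclusion $K_1 \simeq K_2$ follows at once. There is no substantive obstacle here: the whole content of the corollary is the observation that the symmetric density hypothesis (b), together with the nonempty intersection (a), is strong enough to supply both the ``for one $i$'' density and the ``$(\star_l)$ on the other side'' needed by Theorem \ref{4.0}.
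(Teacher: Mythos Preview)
Your proposal is correct and takes essentially the same approach as the paper: the paper's proof is simply ``By the assumption, we can show easily that the conditions in Theorem \ref{4.0} hold,'' and you have spelled out exactly this verification, including the use of Lemma \ref{4.7} and Proposition \ref{1.20} (mirroring the first paragraph of the proof of Theorem \ref{4.0} itself) to obtain condition $(\star_l)$ for $S_2$.
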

\begin{proof}
By the assumption, 
we can show easily that the conditions in Theorem \ref{4.0} hold.
\end{proof}

\begin{lem}\label{4.0.6}
Let $K$ be a number field, and $S$ a set of primes of $K$ with $P_{\infty} \subset S$.
If $S$ is finite, then we have 
\begin{equation*}\# S - \sum_{\p \in P_2 \setminus S} ( [K_\p:\bQ_{2}] + 1 ) \leq \dim_{\bF_2}(G_{K,S}^{\ab}/(G_{K,S}^{\ab})^{2}) < \infty.\end{equation*}
In particular, $\# S = \infty$ if and only if $\dim_{\bF_2}(G_{K,S}^{\ab}/(G_{K,S}^{\ab})^{2}) = \infty$.
\end{lem}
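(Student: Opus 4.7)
The plan is to translate $\dim_{\bF_2}(G_{K,S}^{\ab}/(G_{K,S}^{\ab})^{2})$ into a Kummer-theoretic Selmer group and bound it from below by a suitable unit group. By Pontryagin duality for profinite abelian groups,
$$\dim_{\bF_2}(G_{K,S}^{\ab}/(G_{K,S}^{\ab})^{2}) = \dim_{\bF_2}\Hom_{cts}(G_{K,S},\bF_2) = \dim_{\bF_2}H^1(G_{K,S},\bF_2).$$
Since $\mu_2 = \{\pm 1\}\subset K$, Kummer theory identifies $H^1(G_K,\bF_2) = K^\times/(K^\times)^2$, and the subgroup cut out by $G_{K,S}$ is
$$W_S := \{[a]\in K^\times/(K^\times)^2\mid K(\sqrt a)/K\text{ is unramified outside }S\}.$$
For $\p\notin S$, unramifiedness at $\p$ translates into $v_\p(a)\in 2\bZ$ if $\p\nmid 2$, and into $a\in (K_\p^\times)^2$ if $\p\mid 2$; the assumption $P_\infty\subset S$ kills any archimedean condition.

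For the upper bound ($<\infty$), enlarge to $T := S\cup P_2$ (still finite, as $P_2$ is finite). Then $W_S\subset W_T$, and the classical exact sequence
$$0\to \mathcal O_{K,T}^\times/(\mathcal O_{K,T}^\times)^2\to W_T\to \mathrm{Cl}_T(K)[2]\to 0,$$
sending $[a]\mapsto [\mathfrak b]$ where $(a) = \mathfrak b^2\cdot\prod_{\p\in T}\p^{n_\p}$, exhibits $W_T$ as finite, by the $S$-unit theorem and the finiteness of the ideal class group.

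For the lower bound, consider the subgroup of $W_S$ produced by global $S$-units:
$$U_S := \ker\Big(\mathcal O_{K,S}^\times/(\mathcal O_{K,S}^\times)^2 \longrightarrow \prod_{\p\in P_2\setminus S}\mathcal O_{K_\p}^\times/(\mathcal O_{K_\p}^\times)^2\Big).$$
The map $\mathcal O_{K,S}^\times/(\mathcal O_{K,S}^\times)^2\to K^\times/(K^\times)^2$ is injective (since $(K^\times)^2\cap \mathcal O_{K,S}^\times = (\mathcal O_{K,S}^\times)^2$, by inspecting valuations), and it sends $U_S$ into $W_S$: an $S$-unit has $v_\p = 0$ for $\p\notin S$, handling $\p\nmid 2$; the kernel condition plus $\mathcal O_{K_\p}^\times\cap(K_\p^\times)^2 = (\mathcal O_{K_\p}^\times)^2$ handles $\p\mid 2$. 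The $S$-unit theorem combined with $\mu_2\subset K$ gives $\dim_{\bF_2}\mathcal O_{K,S}^\times/(\mathcal O_{K,S}^\times)^2 = \#S_f + r_1 + r_2 = \#S$. For $\p\mid 2$, writing $\mathcal O_{K_\p}^\times \cong \mu(\kappa(\p))\times U^{(1)}$ with $|\mu(\kappa(\p))|$ odd and $U^{(1)}$ a $\bZ_2$-module of rank $[K_\p:\bQ_2]$ whose torsion is a cyclic $2$-group containing $\mu_2$, one computes $\dim_{\bF_2}\mathcal O_{K_\p}^\times/(\mathcal O_{K_\p}^\times)^2 = 1 + [K_\p:\bQ_2]$. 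Hence
$$\dim_{\bF_2}W_S \geq \dim_{\bF_2}U_S \geq \#S - \sum_{\p\in P_2\setminus S}\bigl(1+[K_\p:\bQ_2]\bigr),$$
which is the claimed inequality.

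For the ``in particular'' assertion, the implication $\dim = \infty\Rightarrow \#S = \infty$ is the contrapositive of the finite-$S$ statement. Conversely, if $\#S = \infty$, then for each finite $S'\subset S$ with $P_\infty\subset S'$, the surjection $G_{K,S}^{\ab}\twoheadrightarrow G_{K,S'}^{\ab}$ yields
$$\dim_{\bF_2}(G_{K,S}^{\ab}/(G_{K,S}^{\ab})^2) \geq \dim_{\bF_2}(G_{K,S'}^{\ab}/(G_{K,S'}^{\ab})^2) \geq \#S' - \sum_{\p\in P_2}(1+[K_\p:\bQ_2]),$$
which is unbounded as $S'$ ranges over finite subsets of $S$. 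The only real technical point is the local calculation of $\dim_{\bF_2}\mathcal O_{K_\p}^\times/(\mathcal O_{K_\p}^\times)^2$ for $\p\mid 2$; the rest is bookkeeping with the $S$-unit theorem.
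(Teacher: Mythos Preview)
Your proof is correct and takes essentially the same approach as the paper --- both use the $S$-unit theorem to produce an $\bF_2$-space of dimension $\#S$ and then subtract at most $[K_\p:\bQ_2]+1$ for each $\p\in P_2\setminus S$; the paper phrases this on the Galois side (forming $L=K(\sqrt{\mathcal O_{K,S}^\times})$ and quotienting $G(L/K)$ by the inertia subgroups $I_\p$, bounded via local class field theory), while you phrase it dually on the Kummer side. One minor slip that does not affect the argument: for $\p\mid 2$ with $\p\notin S$, unramifiedness of $K(\sqrt a)/K$ at $\p$ is \emph{not} equivalent to $a\in(K_\p^\times)^2$ (there exist unramified quadratic extensions of $2$-adic fields), but you only ever use the trivially true implication $a\in(\mathcal O_{K_\p}^\times)^2\Rightarrow K_\p(\sqrt a)=K_\p$, so the inclusion $U_S\hookrightarrow W_S$ and hence the lower bound stand.
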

\begin{proof}
Assume $S$ is finite.
Then, by \cite{NSW}, (8.3.20) Theorem, 
$H^1(G_{K,S},\bF_2) \simeq \Hom(G_{K,S}^{\ab}/(G_{K,S}^{\ab})^{2}, \bF_2)$ is finite.
Hence 
$\dim_{\bF_2}(G_{K,S}^{\ab}/(G_{K,S}^{\ab})^{2})$ is finite.
Set $L \defeq K( \sqrt{\mathcal O_{K,S}^{\times}} )$.
By \cite{Neukirch3}, Chapter V, (3.3) Lemma, 
$L/K$ is unramified outside $P_2 \cup S$.
Since the canonical homomorphism: $\mathcal O_{K,S}^{\times}/(\mathcal O_{K,S}^{\times})^2 \to K^{\times}/K^{\times 2}$ is injective, 
we have the isomorphism: 
$G(L/K) \simeq \Hom(O_{K,S}^{\times}/(\mathcal O_{K,S}^{\times})^2, \mu_2)$ by Kummer theory.
By Dirichlet's $S$-unit theorem and $\mu_2 \subset \mathcal O_{K,S}^{\times}$, 
we have 
$\dim_{\bF_2}(O_{K,S}^{\times}/(\mathcal O_{K,S}^{\times})^2) = \# S
$, 
so that $\dim_{\bF_2}G(L/K) = \# S$.
For $\p \in P_2 \setminus S$, 
we write $I_\p$ for the inertia subgroup of $D_{\p,L/K} \subset G(L/K)$.
Then, by local class field theory, we have 
$\dim_{\bF_2} I_\p \leq [K_\p:\bQ_{2}] + 1$, so that 
\begin{equation*}
\begin{split}
\dim_{\bF_2}(G_{K,S}^{\ab}/(G_{K,S}^{\ab})^{2}) 
&\geq \dim_{\bF_2} (G(L/K)/\langle I_\p \mid \p \in P_2 \setminus S \rangle) \\
&\geq \dim_{\bF_2}G(L/K) - \sum_{\p \in P_2 \setminus S} \dim_{\bF_2} I_\p\\
&\geq \# S - \sum_{\p \in P_2 \setminus S} ( [K_\p:\bQ_{2}] + 1 ).
\end{split}
\end{equation*}
The last assertion follows from the inequality.
\end{proof}

\begin{rem}
In Theorem \ref{4.0}, 
if, for one $i$, $K_i$ is totally real and 
there exists a prime number $l \in P_{S_1,f} \cap P_{S_2,f}$ 
such that 
the Leopoldt conjecture is true for the pair ($K_i$, $l$), 
then we can omit 
condition (c).
Indeed, 
then, 
for $i=1,2$, 
$K_i$ has only one $\bZ_l$-extension, 
so that 
condition $(\star_l)$ for $S_i$ is equivalent to the condition $\# S_i = \infty$ 
by Remark \ref{1.18}.
Therefore, 
by condition (b) and Lemma \ref{4.0.6}, 
condition (c) holds.
\end{rem}

\begin{theorem}\label{4.1}
For $i=1,2$, 
let $K_i$ be a number field, $S_i$ a set of primes of $K_i$ 
with $P_{K_i,\infty} \subset S_i$ 
and $\sigma :G_{K_1,S_1}\isom G_{K_2,S_2}$ 
an 
isomorphism.
Assume that the following conditions hold:
\begin{itemize}

\item[(a)]
$\# P_{S_i,f} \geq 2$ for $i=1,2$.

\item[(b)]
For one $i$, there exist a totally real subfield $K_{i,0} \subset K_i$ and a set of nonarchimedean primes $T_{i,0}$ of $K_{i,0}$ such that $T_{i,0}(K_i) \subset S_{i,f}$ and $\delta(T_{i,0}(\widetilde{K_1}\widetilde{K_2})) \neq 0$.

\item[(c)]
For the $i$ in condition (b), 
there exists 
a prime number $l \in P_{S_1,f} \cap P_{S_2,f}$ 
such that 
$S_{3-i}$ satisfies condition $(\star_l)$.

\item[(d)]
$K_i$ has a complex prime for one $i$.

\end{itemize}
Then 
$K_1 \simeq K_2$.
\end{theorem}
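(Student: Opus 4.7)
The plan is to follow the overall strategy of Theorem \ref{4.0}, but to exploit condition (b), which supplies a totally real subfield, so as to obtain the good local correspondence directly on $T_{1,0}(K_1)$ rather than having to pass to cyclotomic enlargements as in Theorem \ref{4.0}. Condition (d) then provides the complex prime needed to apply Proposition \ref{4f.5} directly to $K_1$ and a conjugate of $K_2$, which bypasses the totally imaginary detour through auxiliary fields $L_1, L_1'$ used in the proof of Theorem \ref{4.0}.

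By symmetry I may assume the $i$ in conditions (b) and (c) is $1$. The first step is to verify that $\delta(S_1) \neq 0$, so that $S_1$ satisfies $(\star_l)$ by Proposition \ref{1.20}. From $\delta(T_{1,0}(\widetilde{K_1}\widetilde{K_2})) \neq 0$, the key observation is that only primes of absolute residue degree one contribute to the Dirichlet density, so there is a subset $T_{1,0}' \subset T_{1,0}$ consisting of primes that split completely in $\widetilde{K_1}\widetilde{K_2}/K_{1,0}$ with $\delta_{\sup}(T_{1,0}') > 0$. Since $K_1 \subset \widetilde{K_1}$, these primes also split completely in $K_1/K_{1,0}$, so Lemma \ref{4.7} gives $\delta_{\sup}(T_{1,0}'(K_1)) = [K_1:K_{1,0}] \delta_{\sup}(T_{1,0}') > 0$, and $T_{1,0}'(K_1) \subset S_{1,f}$.

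With $S_1, S_2$ both satisfying $(\star_l)$, condition (a), and the totally real data $(K_{1,0}, T_{1,0})$ from (b), Theorem \ref{2.5} yields the good local correspondence between $T_1 := T_{1,0}(K_1)$ and $T_2 := \sigma_\ast(T_1)|_{K_2}$ for $\sigma$. Setting $L := \widetilde{K_1}\widetilde{K_2}$, which is Galois over $\bQ$ and contains $K_1 K_2$, I have $T_1(L) = T_{1,0}(\widetilde{K_1}\widetilde{K_2})$, so the hypotheses of Proposition \ref{4f.3} are satisfied and produce some $\tau \in G(L/\bQ)$ with $\overline{\sigma} \circ \pi_{L/K_1,l} = (\tau|_{K_2})^\ast \circ \pi_{L/\tau(K_2),l}$. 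Composing with $((\tau|_{K_2})^\ast)^{-1}$ and applying Lemma \ref{4f.4} to $K_1$ and $\tau(K_2)$ gives $K_1^{(\infty,l)}\tau(K_2) = K_1 \tau(K_2)^{(\infty,l)}$. Finally, condition (d) implies that $K_1$ or $\tau(K_2)$ has a complex prime (as $\tau$ preserves the infinite-prime structure of $K_2$), so Proposition \ref{4f.5} applied to $K_1, \tau(K_2)$ with the finite Galois extension $L$ yields $K_1 = \tau(K_2)$, hence $K_1 \simeq K_2$.

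The main obstacle I expect is the first density transfer step: deducing $\delta(S_1) \neq 0$ from the upper-level hypothesis $\delta(T_{1,0}(\widetilde{K_1}\widetilde{K_2})) \neq 0$ is not entirely immediate, because a priori primes going up can split into many pieces of arbitrary residue degree, so density does not transfer in a uniform fashion. The required argument is elementary but must carefully use that primes of absolute residue degree $\geq 2$ are negligible for the density, together with Lemma \ref{4.7}, to first pull density down to $T_{1,0}$ and then push it back up to $K_1$ via complete splitting. Once this step is in place, the remaining construction is a streamlined version of the Theorem \ref{4.0} argument.
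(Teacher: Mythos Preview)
Your proof is correct and matches the paper's approach essentially step for step: establish $\delta(S_1)\neq 0$ via Lemma~\ref{4.7}, invoke Theorem~\ref{2.5} for the good local correspondence on $T_1=T_{1,0}(K_1)$, then chain Proposition~\ref{4f.3} (with $L=\widetilde{K_1}\widetilde{K_2}$), Lemma~\ref{4f.4}, and Proposition~\ref{4f.5}. The only cosmetic differences are that the paper handles the density step directly at the level of $K_1$ (via $\delta_{\sup}(T_1\cap\cs(\widetilde{K_1}\widetilde{K_2}/K_1))=\delta_{\sup}(T_1(\widetilde{K_1}\widetilde{K_2}))/[\widetilde{K_1}\widetilde{K_2}:K_1]$) rather than descending to $K_{1,0}$ and re-ascending, and it explicitly multiplies the relation $K_1^{(\infty,l)}\tau(K_2)=K_1\tau(K_2)^{(\infty,l)}$ by $L$ before invoking Proposition~\ref{4f.5}.
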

\begin{proof}
By symmetry, we may assume that the $i$ in condition (b) is $1$.
We set $T_1 \defeq T_{1,0}(K_1)$.
By Lemma \ref{4.7}, 
we have 
$\delta_{\sup}(S_1) 
\geq \delta_{\sup}(T_1 \cap \cs(\widetilde{K_1}\widetilde{K_2}/K_1)) 
= \delta_{\sup}(T_1(\widetilde{K_1}\widetilde{K_2})) / [ \widetilde{K_1}\widetilde{K_2} : K_1 ]
> 0$.
Hence, by Proposition \ref{1.20}, 
$S_1$ satisfies condition $(\star_l)$ 
for the prime number $l$ in condition (c).
Therefore, 
the local correspondence 
(resp. the good local correspondence) 
between $S_{1,f}$ (resp. $T_1$) and $S_{2,f}$ (resp. $T_2$) 
holds for $\sigma$ by Theorem \ref{2.5}, 
where 
$T_2 \subset S_{2,f}$ is the set corresponding to $T_1$ under the local correspondence between $S_{1,f}$ and $S_{2,f}$.
Then, we have $P_{S_1,f} = P_{S_2,f}$ by Proposition \ref{2.7}.

Take $l \in P_{S_1,f}$.
By Proposition \ref{4f.3}, 
there exists $\tau \in G(\widetilde{K_1}\widetilde{K_2}/\bQ)$ such that 
\begin{equation*}\overline{\sigma}_{K_1} \circ \pi_{\widetilde{K_1}\widetilde{K_2}/K_1, l} = (\tau|_{K_2})^\ast \circ \pi_{\widetilde{K_1}\widetilde{K_2}/\tau(K_2), l},\end{equation*}
where $\overline{\sigma}_{K_1}:\Gamma_{K_1, l} \isom \Gamma_{K_2, l}$ is the isomorphism induced by $\sigma$. 
Therefore, 
\begin{equation*}{(\tau|_{K_2})^\ast}^{-1} \circ \overline{\sigma}_{K_1} \circ \pi_{\widetilde{K_1}\widetilde{K_2}/K_1, l} = \pi_{\widetilde{K_1}\widetilde{K_2}/\tau(K_2), l},\end{equation*}
so that 
we have 
$K_1^{(\infty,l)}\tau(K_2) = K_1\tau(K_2)^{(\infty,l)}$ 
by Lemma \ref{4f.4}.
Then $K_1^{(\infty,l)}\widetilde{K_1}\widetilde{K_2} = \widetilde{K_1}\widetilde{K_2}\tau(K_2)^{(\infty,l)}$.
By Proposition \ref{4f.5}, we obtain $K_1 = \tau(K_2)$.
\end{proof}

\begin{cor}\label{4.2}
For $i=1,2$, 
let $K_i$ be a number field, $S_i$ a set of primes of $K_i$ 
with $P_{K_i,\infty} \subset S_i$ 
and $\sigma :G_{K_1,S_1}\isom G_{K_2,S_2}$ 
an 
isomorphism.
Assume that the following conditions hold:
\begin{itemize}

\item[(a)]
$\# P_{S_i,f} \geq 2$ for $i=1,2$.

\item[(b)]
For one $i$, there exist a totally real subfield $K_{i,0} \subset K_i$ and a set of nonarchimedean primes $T_{i,0}$ of $K_{i,0}$ such that $T_{i,0}(K_i) \subset S_{i,f}$ and $\delta(T_{i,0}(K_i)) \neq 0$.

\item[(c)]
For the $i$ in condition (b), 
there exists 
a prime number $l \in P_{S_1,f} \cap P_{S_2,f}$ 
such that 
$S_{3-i}$ satisfies condition $(\star_l)$.

\item[(d)]
$K_i/\bQ$ is Galois for $i=1,2$ and $K_i$ is totally imaginary for one $i$.

\end{itemize}
Then 
$K_1 \simeq K_2$.
\end{cor}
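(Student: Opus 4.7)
The plan is to reduce Corollary \ref{4.2} directly to Theorem \ref{4.1} by verifying the latter's hypotheses. By symmetry, I may assume the distinguished index in (b) and (c) is $i=1$, and I set $T_1 \defeq T_{1,0}(K_1) \subset S_{1,f}$. My first task is to upgrade (c) by showing that $S_1$ also satisfies $(\star_l)$. Since $T_1 \subset S_{1,f}$ and $\delta(T_1) \neq 0$, one has $\delta_{\sup}(S_1) \geq \delta_{\sup}(T_1) > 0$, so Proposition \ref{1.20} gives $(\star_l)$ for $S_1$. With $(\star_l)$ now in force for both $S_1$ and $S_2$, Theorem \ref{2.5} applies and produces the good local correspondence between $T_1 = T_{1,0}(K_1)$ and the corresponding $T_2 \subset S_{2,f}$, the totally-real-subfield hypothesis in Theorem \ref{2.5} being met by $K_{1,0}$ and $T_{1,0}$.

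The crucial step is the density transfer. Since by (d) both $K_1/\bQ$ and $K_2/\bQ$ are Galois, Lemma \ref{3.2} applies to the good local correspondence between $T_1$ and $T_2$ and yields
$$\delta_{\sup}(T_1(K_1K_2)) = [K_1K_2 : K_1] \cdot \delta_{\sup}(T_1) > 0.$$
Tautologically $T_1(K_1K_2) = T_{1,0}(K_1K_2)$, and the Galois hypothesis forces $\widetilde{K_i} = K_i$, hence $\widetilde{K_1}\widetilde{K_2} = K_1K_2$. Therefore $\delta(T_{1,0}(\widetilde{K_1}\widetilde{K_2})) \neq 0$, which is exactly hypothesis (b) of Theorem \ref{4.1}.

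The remaining hypotheses of Theorem \ref{4.1} follow immediately: its (a) and (c) transfer verbatim from the corresponding items of Corollary \ref{4.2}, and its (d) (some $K_i$ has a complex prime) follows from (d) of Corollary \ref{4.2}, since a totally imaginary number field has by definition at least one complex archimedean prime. Theorem \ref{4.1} then yields $K_1 \simeq K_2$. The only substantive step here is the density transfer in the middle paragraph: this is precisely the place where the Galois assumption (d) is indispensable, since without it one could not promote the density bound on $K_i$ to one on the Galois compositum $\widetilde{K_1}\widetilde{K_2}$ via Lemma \ref{3.2}.
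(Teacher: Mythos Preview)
Your proof is correct and follows essentially the same route as the paper's: reduce to Theorem \ref{4.1} by using Proposition \ref{1.20} to get $(\star_l)$ for $S_1$, invoke Theorem \ref{2.5} for the good local correspondence between $T_1$ and $T_2$, and then apply Lemma \ref{3.2} (using that both $K_i/\bQ$ are Galois) to lift the density to $K_1K_2 = \widetilde{K_1}\widetilde{K_2}$. Your write-up is slightly more explicit than the paper's in spelling out why $\widetilde{K_1}\widetilde{K_2} = K_1K_2$ and why totally imaginary implies the complex-prime hypothesis of Theorem \ref{4.1}, but the argument is the same.
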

\begin{proof}
By symmetry, we may assume that the $i$ in condition (b) is $1$.
We set $T_1 \defeq T_{1,0}(K_1)$.
Then $\delta_{\sup}(S_1) \geq \delta_{\sup}(T_1) > 0$.
As in the first paragraph of the proof of Theorem \ref{4.1}, 
the local correspondence 
(resp. the good local correspondence) 
between $S_{1,f}$ (resp. $T_1$) and $S_{2,f}$ (resp. $T_2$) 
holds for $\sigma$, 
where 
$T_2 \subset S_{2,f}$ is the set corresponding to $T_1$ under the local correspondence between $S_{1,f}$ and $S_{2,f}$.
Then, by Lemma \ref{3.2}, we have 
$\delta_{\sup}(T_1(K_1K_2))=[K_1K_2:K_1]\delta_{\sup}(T_1) >0$.
Therefore, by Theorem \ref{4.1}, we obtain $K_1 \simeq K_2$.
\end{proof}


\begin{theorem}\label{4.3}
For $i=1,2$, 
let $K_i$ be a number field, $S_i$ a set of primes of $K_i$ 
with $P_{K_i,\infty} \subset S_i$ 
and $\sigma :G_{K_1,S_1}\isom G_{K_2,S_2}$ 
an isomorphism.
Assume that the following conditions hold:
\begin{itemize}
\item[(a)]
$\# P_{S_i,f} \geq 2$ for $i=1,2$.

\item[(b)]
For one $i$, there exist a totally real subfield $K_{i,0} \subset K_i$ and a set of nonarchimedean primes $T_{i,0}$ of $K_{i,0}$ such that $T_{i,0}(K_i) \subset S_{i,f}$ and 
$\delta_{\sup}(T_{i,0}(K_i)) > 1/2$.

\item[(c)]
For the $i$ in condition (b), 
there exists 
a prime number $l \in P_{S_1,f} \cap P_{S_2,f}$ 
such that 
$S_{3-i}$ satisfies condition $(\star_l)$.

\item[(d)]
$K_i/\bQ$ is Galois for $i=1,2$.

\end{itemize}
Then 
$K_1 \simeq K_2$.
\end{theorem}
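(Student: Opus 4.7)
The plan is to assemble the machinery developed so far; since Proposition \ref{3.3} already delivers the conclusion from a good local correspondence plus density $>1/2$, the task reduces to producing that correspondence. By symmetry I may assume the index $i$ in condition (b) is $1$, and I set $T_1 \defeq T_{1,0}(K_1)$, noting that $\delta_{\sup}(T_1) > 1/2$.

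The first step is to upgrade condition (c) so that it applies to both $S_1$ and $S_2$. Since $T_1 \subset S_{1,f}$, we have $\delta_{\sup}(S_1) \geq \delta_{\sup}(T_1) > 1/2 > 0$, so in particular $\delta(S_1) \neq 0$ in the sense of Notations. Proposition \ref{1.20} then forces $S_1$ to satisfy condition $(\star_l)$ for every prime $l$, in particular for the prime $l$ provided by condition (c). Thus both $S_1$ and $S_2$ satisfy $(\star_l)$, and condition (a) supplies the hypothesis $\#P_{S_i,f} \geq 2$ needed below.

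With these prerequisites I would apply Theorem \ref{2.5}. The first half of Theorem \ref{2.5} gives the local correspondence between $S_{1,f}$ and $S_{2,f}$; let $T_2 \subset S_{2,f}$ be the subset obtained from $T_1$ via $\sigma_{\ast,K_1}$. The second half of Theorem \ref{2.5}, applied with the totally real subfield $K_{1,0}$ and $T_{1,0}$ furnished by condition (b), then upgrades this to a good local correspondence between $T_1$ and $T_2$.

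At this point all three hypotheses of Proposition \ref{3.3} are in force: its condition (a) is our condition (d), its condition (b) is the good local correspondence just established, and its condition (c) is exactly $\delta_{\sup}(T_1) > 1/2$. Proposition \ref{3.3} then yields $K_1 \simeq K_2$. The argument is essentially bookkeeping, and I do not expect a serious obstacle: the only slightly nontrivial input is the passage from the density bound on $T_{1,0}(K_1)$ to the property $(\star_l)$ for $S_1$, but this is immediate from the monotonicity $\delta_{\sup}(T_1) \leq \delta_{\sup}(S_1)$ together with Proposition \ref{1.20}. In this sense Theorem \ref{4.3} is the cleanest of the main results, being a direct corollary of Theorem \ref{2.5} and Proposition \ref{3.3}.
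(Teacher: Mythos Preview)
Your proposal is correct and follows essentially the same route as the paper: the paper's proof simply instructs one to modify the proof of Corollary \ref{4.2} by invoking Proposition \ref{3.3} in place of Theorem \ref{4.1}, which unpacks to exactly the argument you wrote --- use $\delta_{\sup}(S_1)\geq\delta_{\sup}(T_1)>0$ and Proposition \ref{1.20} to get $(\star_l)$ for $S_1$, apply Theorem \ref{2.5} to obtain the good local correspondence between $T_1$ and $T_2$, and conclude via Proposition \ref{3.3}.
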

\begin{proof}
It is enough to modify the proof of Corollary \ref{4.2} 
by using Proposition \ref{3.3} instead of Theorem \ref{4.1}.
\end{proof}

\begin{theorem}\label{4.9}
For $i=1,2$, 
let $K_i$ be a number field, $S_i$ a set of primes of $K_i$ 
with $P_{K_i,\infty} \subset S_i$ 
and $\sigma :G_{K_1,S_1}\isom G_{K_2,S_2}$ 
an 
isomorphism.
Assume that 
$\delta_{\sup}(S_1) + \delta_{\inf}(S_2) > 2-${\large$\frac{1}{[\widetilde{K_1}\widetilde{K_2}:\bQ]}$}.
Then 
$K_1 \simeq K_2$.
\end{theorem}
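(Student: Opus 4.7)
The plan is to apply Corollary \ref{4.0.5}; for this I must verify that $P_{S_1,f} \cap P_{S_2,f}$ is nonempty and that $\delta(P_{S_i,f} \cap \cs(K_i/\bQ)) \neq 0$ for $i = 1,2$. Set $L \defeq \widetilde{K_1}\widetilde{K_2}$ and $N \defeq [L:\bQ]$. Since $\delta_{\sup}(S_1), \delta_{\inf}(S_2) \leq 1$, the hypothesis forces $\delta_{\sup}(S_1) > 1 - 1/N$ and $\delta_{\inf}(S_2) > 1 - 1/N$, which will drive the rest of the argument.

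The key is to descend to rational primes and work with
$$C_i \defeq \{\, p \in \cs(L/\bQ) \mid P_{K_i,p} \subset S_i \,\} \subset P_{\bQ,f},\qquad i=1,2.$$
Since $\cs(L/\bQ) \subset \cs(\widetilde{K_i}/\bQ) = \cs(K_i/\bQ)$, each $p \in \cs(L/\bQ)$ has exactly $[K_i:\bQ]$ primes of $K_i$ above it, all of norm $p$; hence for $p \in \cs(L/\bQ) \setminus C_i$ at least one such prime lies in $P_{K_i,f} \setminus S_i$. Combining this observation with Lemma \ref{4.10} applied to the density-one set $P_{K_i,f}$ (to convert the density of $P_{K_i,f} \setminus S_i$ into $1 - \delta_{\sup}(S_1)$ or $1 - \delta_{\inf}(S_2)$), and with Lemma \ref{4.10} applied to $\cs(L/\bQ)$ (of density $1/N$ by Chebotarev), I will deduce
$$\delta_{\sup}(C_1) \geq \delta_{\sup}(S_1) - 1 + 1/N, \qquad \delta_{\inf}(C_2) \geq \delta_{\inf}(S_2) - 1 + 1/N.$$

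For the concluding step, the inclusion $C_1 \subset (C_1 \cap C_2) \cup (\cs(L/\bQ) \setminus C_2)$ together with the subadditivity of $\limsup$ on nonnegative sums and with $\delta_{\sup}(\cs(L/\bQ) \setminus C_2) = 1/N - \delta_{\inf}(C_2)$ (Lemma \ref{4.10}) yields
$$\delta_{\sup}(C_1 \cap C_2) \geq \delta_{\sup}(C_1) + \delta_{\inf}(C_2) - 1/N \geq \delta_{\sup}(S_1) + \delta_{\inf}(S_2) - 2 + 1/N > 0$$
by the hypothesis. In particular $C_1 \cap C_2$ is nonempty, and by construction it is contained in $P_{S_i,f} \cap \cs(K_i/\bQ)$ for $i = 1,2$; both hypotheses of Corollary \ref{4.0.5} are thus verified, giving $K_1 \simeq K_2$.

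The only real subtlety is keeping track of which density ($\delta_{\sup}$ versus $\delta_{\inf}$) is used at each step: since the hypothesis provides information only on $\delta_{\sup}(S_1)$ and $\delta_{\inf}(S_2)$, the three invocations of Lemma \ref{4.10} must be aligned so that $C_1$ is controlled via $\delta_{\sup}$, $C_2$ via $\delta_{\inf}$, and the final sandwich of $C_1$ between $C_1 \cap C_2$ and $\cs(L/\bQ) \setminus C_2$ combines these with the correct signs. Once this bookkeeping is set up the estimates are mechanical.
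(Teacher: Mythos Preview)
Your proof is correct. The approach parallels the paper's but with a pleasant simplification: both arguments feed into Theorem \ref{4.0} (via Corollary \ref{4.0.5} in your case) through a density pigeonhole computation, but where the paper invokes Lemma \ref{4.4} to bound $\delta_{\sup}(P_{S_1^{\s}})$ and $\delta_{\inf}(P_{S_2^{\s}})$ (with $P_{S_i^{\s}} = P_{S_i,f} \cap \cs(K_i/\bQ)$), you bypass that lemma entirely by restricting further to $C_i \subset \cs(\widetilde{K_1}\widetilde{K_2}/\bQ)$ and using the elementary observation that each $p \in \cs(\widetilde{K_1}\widetilde{K_2}/\bQ) \setminus C_i$ contributes a degree-one prime of $K_i$ of norm $p$ lying outside $S_i$. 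The paper's route yields marginally sharper intermediate bounds (in terms of $1/[\widetilde{K_i}:\bQ]$ rather than $1/N$) and then establishes $P_{S_1^{\s}} \cap P_{S_2^{\s}} \neq \emptyset$ and the remaining conditions of Theorem \ref{4.0} separately; your route produces $\delta_{\sup}(C_1 \cap C_2) > 0$ in one stroke, which simultaneously verifies both hypotheses of Corollary \ref{4.0.5}. Your careful alignment of $\delta_{\sup}$ versus $\delta_{\inf}$ through the three applications of Lemma \ref{4.10} is exactly what is needed, and the term-by-term comparison of Dirichlet sums justifies passing the relevant $\liminf$/$\limsup$ inequalities.
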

\begin{proof}
We show that the conditions in Theorem \ref{4.0} hold.
By Lemma \ref{4.7} and Lemma \ref{4.4}, 
\begin{equation*}
\begin{split}
\delta_{\sup}(P_{S_1^{\s}}) + \delta_{\inf}(P_{S_2^{\s}})
&= \frac{\delta_{\sup}(S_1^{\s,\ff})}{[K_1:\bQ]}
+ \frac{\delta_{\inf}(S_2^{\s,\ff})}{[K_2:\bQ]}
\geq 
\delta_{\sup}(S_1) + \delta_{\inf}(S_2) - 2 
+ \frac{1}{[\widetilde{K_1}:\bQ]} + \frac{1}{[\widetilde{K_2}:\bQ]}\\
&> 
\frac{1}{[\widetilde{K_1}:\bQ]} + \frac{1}{[\widetilde{K_2}:\bQ]}
-
\frac{1}{[\widetilde{K_1}\widetilde{K_2}:\bQ]}.
\qquad\qquad\qquad\qquad\qquad\qquad\quad(5.1)
\end{split}
\end{equation*}
Note that 
$P_{S_1^{\s}} \subset \cs(\widetilde{K_1}/\bQ)$ and $P_{S_2^{\s}} \subset \cs(\widetilde{K_2}/\bQ)$.
By the Chebotarev density theorem, 
$\delta(\cs(\widetilde{K_1}/\bQ))=1/[\widetilde{K_1}:\bQ]$, 
$\delta(\cs(\widetilde{K_2}/\bQ))=1/[\widetilde{K_2}:\bQ]$ 
and 
\begin{equation*}\delta(\cs(\widetilde{K_1}/\bQ) \cap \cs(\widetilde{K_2}/\bQ)) 
= \delta(\cs(\widetilde{K_1}\widetilde{K_2}/\bQ))
= \frac{1}{[\widetilde{K_1}\widetilde{K_2}:\bQ]}.\end{equation*}
Hence 
\begin{equation*}\delta_{\sup}(P_{S_1^{\s}}) \leq 1/[\widetilde{K_1}:\bQ] \text{, } \delta_{\inf}(P_{S_2^{\s}}) \leq 1/[\widetilde{K_2}:\bQ]
\eqno(5.2)\end{equation*} 
and 
by Lemma \ref{4.10}, 
\begin{equation*}\delta_{\sup}((\cs(\widetilde{K_1}/\bQ) \cup \cs(\widetilde{K_2}/\bQ)) \setminus P_{S_2^{\s}}) 
=
\frac{1}{[\widetilde{K_1}:\bQ]} + \frac{1}{[\widetilde{K_2}:\bQ]}
-\frac{1}{[\widetilde{K_1}\widetilde{K_2}:\bQ]} - \delta_{\inf}(P_{S_2^{\s}}).\end{equation*}
Therefore, 
$P_{S_1^{\s}}$ is 
not contained in $(\cs(\widetilde{K_1}/\bQ) \cup \cs(\widetilde{K_2}/\bQ)) \setminus P_{S_2^{\s}}$, so that 
$P_{S_1^{\s}} \cap P_{S_2^{\s}} \neq \emptyset$.
Since $P_{S_i^{\s}} = P_{S_i,f} \cap \cs(K_i/\bQ)$ for $i=1,2$, we have 
$P_{S_1,f} \cap P_{S_2,f} \neq \emptyset$.
By (5.1) and 
(5.2), 
we have 
$\delta_{\sup}(P_{S_1^{\s}})>0$
and $\delta_{\inf}(P_{S_2^{\s}})>0$, 
so that 
condition (b) for $i=1$ holds.
In particular, $\# P_{S_1,f} = \infty$ and $\# P_{S_2,f} = \infty$, 
and hence condition (a) holds.
Since 
\begin{equation*}\delta_{\sup}(S_2) \geq \delta_{\inf}(S_2^{\s,\ff}) 
= [K_2:\bQ]\delta_{\inf}(P_{S_2^{\s}}) 
>0\end{equation*} by Lemma \ref{4.7}, 
condition (c) 
for $i=1$ and 
for any $l \in P_{S_1,f} \cap P_{S_2,f}$
holds by Proposition \ref{1.20}.
\end{proof}

\begin{rem}
If the Leopoldt conjecture is true for all pairs ($K_i$, $p$) where $i=1,2$ and $p$ runs through all prime numbers, 
we can replace the assumption in Theorem \ref{4.9} by the weaker assumption: ``$\delta_{\sup}(S_i) > 
1-1/[\widetilde{K_i}:\bQ]$ for $i=1,2$''.
Indeed, 
by Lemma \ref{4.7} and Lemma \ref{4.4}, 
$\delta_{\sup}(P_{S_i^{\s}})>0$ for $i=1,2$, 
so that 
$P_{S_i,f} \neq \emptyset$.
Hence, by \cite{Ivanov}, Proposition 4.1 (which assumes the validity of the Leopoldt conjecture), 
we have 
$P_{S_1,f} = P_{S_2,f}$.\footnote{
In the assertions of \cite{Ivanov}, Proposition 4.1, 
$S$ is assumed to be finite.
However, even if $S$ is not finite, 
it is easy to modify the proof of the assertion we use, 
by defining $\rk_{\bZ_p}M = \dim_{\bQ_p}((M/\overline{M_{\bZ_p\text{-}\tor}})\otimes_{\bZ_p}\bQ_p)$ for a profinite $\bZ_p$-module $M$, where $\overline{M_{\bZ_p\text{-}\tor}}$ is the closure in $M$ of $M_{\bZ_p\text{-}\tor}$.
}
Therefore, condition (c) in Theorem \ref{4.0} 
for $i=1$ and 
for any $l \in P_{S_1,f}$
holds 
by Proposition \ref{1.20}.
The rest of the proof is the same as that of Theorem \ref{4.9}.
\end{rem}

\noindent
Ryoji Shimizu\\
Research Institute for Mathematical Sciences\\
Kyoto University\\
KYOTO 606-8502\\
Japan\\
shimizur@kurims.kyoto-u.ac.jp\\

\end{document}